\documentclass[12pt,twoside]{article}
\usepackage{etex}
\usepackage[dvipsnames]{xcolor} 
\usepackage{color}
\usepackage{booktabs}
\usepackage{amsthm}
\usepackage{amsfonts,amssymb,amsxtra,url,float} 
\allowdisplaybreaks[4] 
\usepackage[colorlinks,
  linkcolor=magenta, %
  anchorcolor=Periwinkle,
  citecolor=violet,
  urlcolor=blue
  ]{hyperref} 
\usepackage{enumitem}
\usepackage{geometry} 
\usepackage{rotating} 
\usepackage{lscape} 
\usepackage{multirow}
\usepackage{graphicx} 
\usepackage{subfigure} 
\usepackage{tikz}
\usepackage{pgfplots}
\usepackage{tikz-3dplot}
\usetikzlibrary{patterns}
\usetikzlibrary{3d,calc}
\usetikzlibrary{decorations.pathreplacing,decorations.markings}
 \tikzset{
  on each segment/.style={
    decorate,
    decoration={
      show path construction,
      moveto code={},
      lineto code={
        \path [#1]
        (\tikzinputsegmentfirst) -- (\tikzinputsegmentlast);
      },
      curveto code={
        \path [#1] (\tikzinputsegmentfirst)
        .. controls
        (\tikzinputsegmentsupporta) and (\tikzinputsegmentsupportb)
        ..
        (\tikzinputsegmentlast);
      },
      closepath code={
        \path [#1]
        (\tikzinputsegmentfirst) -- (\tikzinputsegmentlast);
      },
    },
  },
  mid arrow/.style={postaction={decorate,decoration={
        markings,
        mark=at position 0.6 with {\arrow[#1]{stealth}} 
      }}},
}
\usetikzlibrary{arrows}
\usetikzlibrary{trees}

\usetikzlibrary{matrix}
\usetikzlibrary{patterns}
\usetikzlibrary{shadings} 
\usepackage{fancyhdr} 
\def\headertitle{Module-valued ordinary differential equations and structure of solution spaces}
\def\fstpage{1} 
\def\page{$\begin{matrix} {\color{white}0} \\ \thepage \end{matrix}$} 

\usepackage[all]{xy} 
\usepackage{dsfont} 
\usepackage{cite}
\usepackage{mathrsfs} 
\numberwithin{figure}{section}
\usepackage{marginnote} 
\usepackage{graphicx} 
\usepackage{multicol} 

\usepackage{enumitem}
\setenumerate[1]{itemsep=0pt,partopsep=0pt,parsep=\parskip,topsep=3pt}
\setitemize[1]{itemsep=0pt,partopsep=0pt,parsep=\parskip,topsep=3pt}
\setdescription{itemsep=0pt,partopsep=0pt,parsep=\parskip,topsep=3pt}
\setlist[itemize]{leftmargin=35pt}
\setlist[enumerate]{leftmargin=35pt}
\usepackage{changepage} 
\newcommand{\checks}[1]{{\color{black}{#1}}} 

\newtheorem{theorem}{Theorem}[section]
\newtheorem{lemma}[theorem]{Lemma}
\newtheorem{corollary}[theorem]{Corollary}
\newtheorem{main theorem}[theorem]{Main Theorem}
\newtheorem{proposition}[theorem]{Proposition}
\newtheorem{definition}[theorem]{Definition}
\newtheorem{construction}[theorem]{Construction}
\newtheorem{remark}[theorem]{Remark}
\newtheorem{example}[theorem]{Example}
\newtheorem{notation}[theorem]{Notation}

\newtheorem{assumption}[theorem]{Assumption}

\usetikzlibrary{arrows}

\numberwithin{equation}{section}

\def\orcid{
\begin{tikzpicture}[baseline=-1mm]
\filldraw[Green!35] (0,0) circle (5pt);
\filldraw[white] (0,0) node{\tiny\textbf{iD}};
\end{tikzpicture}
}

\def\RR{\mathbb{R}} 
\def\CC{\mathbb{C}} 
\def\II{\mathbb{I}}

\newcommand{\Ker}{\operatorname{Ker}}

\newcommand{\modcat}{\mathsf{mod}}

\newcommand{\kk}{\Bbbk} 
\newcommand{\Q}{\mathcal{Q}} 
\def\I{\mathcal{I}}

\newcommand{\e}{\varepsilon}
\newcommand{\Hom}{\mathrm{Hom}} %
\newcommand{\End}{\mathrm{End}} %

\newcommand{\op}{\mathrm{op}}
\newcommand{\w}[1]{\widehat{#1}}

\def\bfS{\mathbf{S}}

\def\id{\mathbf{1}}
\def\ident{\mathrm{id}}
\def\frakI{\mathfrak{I}}

\def\emb{\mathrm{emb}}
\def\im{\mathrm{Im}}
\def\calH{\mathcal{H}}

\def\compos{\ \lower-0.2ex\hbox{\tikz\draw (0pt, 0pt) circle (.1em);} \ }

\newcommand{\defines}[1]{{\it\color{black}{#1}}}
\title{\bf Module-valued ordinary differential equations and structure of solution spaces
}

\vspace{5mm}

\author{
Shengda Liu$^{\ref{Author1}, \href{https://orcid.org/0000-0003-1382-4212}{\orcid}\ref{orcid1}}$ ~
Yu-Zhe Liu$^{\ref{Author2}, \href{https://orcid.org/0009-0005-1110-386X}{\orcid}\ref{orcid2},~\ref{CorrespondingAuthor}}$ ~
Keyu Tao$^{\ref{Author3}, \href{https://orcid.org/0009-0005-1110-386X}{\orcid}\ref{orcid3}}$
}
\date{ }
\begin{document}






\maketitle

\begin{enumerate}[label=\textbf{\color{red}\arabic*}] \footnotesize
  \item
    \begin{center}
      The Key Laboratory of Cognition and Decision Intelligence for Complex Systems,
      Institute of Automation, Chinese Academy of Sciences, Beijing 100190, China;

      E-mail:  \url{thinksheng@foxmail.com}
    \end{center} \label{Author1}
  \item
    \begin{center}
      School of Mathematics and Statistics, Guizhou University,
      Guiyang 550025, Guizhou, China;

      E-mail:  \url{liuyz@gzu.edu.cn} / \url{yzliu3@163.com}
    \end{center} \label{Author2}
  \item
    \begin{center}
      School of Mathematics, Nanjing University,
      Nanjing 210008, Jiangsu, China;

      E-mail:  \url{tkytay@163.com}
    \end{center} \label{Author3}
\end{enumerate}
\vspace{2mm}
\begin{enumerate}[label=\textbf{\color{red}$\dag$}]
  \item \footnotesize
    \begin{center}
      Corresponding author
    \end{center} \label{CorrespondingAuthor}
\end{enumerate}
\vspace{2mm}
\begin{enumerate} \footnotesize
  \item[{\orcid}] \centering  ORCID: \href{https://orcid.org/0000-0003-1382-4212}{0000-0003-1382-4212}
      \label{orcid1}
  \item[{\orcid}] \centering  ORCID: \href{https://orcid.org/0009-0005-1110-386X}{0009-0005-1110-386X}
      \label{orcid2}
  \item[{\orcid}] \centering  ORCID: \href{https://orcid.org/0009-0004-6054-6897}{0009-0004-6054-6897}
      \label{orcid3}
\end{enumerate}

\vspace{1mm}

\begin{adjustwidth}{1cm}{1cm}
  \noindent \footnotesize
  \textbf{Abstract}: We define and study homogeneous linear ordinary differential equations for functions valued in a Banach module $V$ over a finite-dimensional $\Bbbk$-algebra $\mathit{\Lambda}$ by using the tensor of Banach modules. Furthermore, we show that the solution space is a finitely generated $\mathit{\Lambda}$-submodule.

\vspace{1mm}

  \noindent
    \textbf{2020 Mathematics Subject Classification}:
16G10; 
16D90; 
34A06; 
46B03; 
46H25. 
     \label{2020MSC}

\vspace{1mm}

  \noindent
    \textbf{Keywords}:
Categorification; finite-dimensional algebras; Banach modules
     \label{Keywords}
\end{adjustwidth}

\tableofcontents


\def\la{\langle} 
\def\ra{\rangle} 
\def\lala{\langle\!\langle}
\def\rara{\rangle\!\rangle}
\def\=<{\leqslant}
\def\>={\geqslant}
\def\vecd{\pmb{d}}
\def\vecg{\pmb{g}}
\def\spa{\mathrm{span}}
\def\Gal{\mathrm{Gal}}
\def\Mat{\mathbf{Mat}}
\def\bfE{\pmb{E}}
\def\op{\mathrm{op}}
\def\norm{\mathfrak{n}}
\def\bigboxplus{\mathop{ \raisebox{-0.3em}{ \scalebox{1.75}{\text{\(\boxplus\)}} } }}

\def\scrN{\mathscr{N}\!or}
\def\scrA{\mathscr{A}}
\def\Pc{\mathds{P}}

\def\invlim{\underleftarrow{\lim}}
\def\dirlim{\underrightarrow{\lim}}

\def\dimA{d_A}
\def\dimB{d_B}
\def\homo{\varsigma}
\def\itLamb{\mathit{\Lambda}}

\def\dd{\mathrm{d}}
\def\Bochner{\mathrm{B}}
\def\Lebesgue{\mathrm{L}}
\def\compos{{\begin{smallmatrix}\circ\end{smallmatrix}}}

\newcommand{\basis}[1]{\mathfrak{B}_{#1}}

\newcommand{\Sol}{\mathbf{Sol}}            
\newcommand{\LODE}{\mathbf{L}}
\newcommand{\EndL}{\End_{\itLamb}}         
\newcommand{\HomL}{\Hom_{\itLamb}}         
\newcommand{\Lip}{\mathrm{Lip}}            
\newcommand{\Dop}{D}                       
\newcommand{\TV}{\w{T}_V}                  
\newcommand{\DV}{\Dop_V}                   
\newcommand{\SV}{\bfS_{\homo}(\II_{\itLamb}, V)} 
\newcommand{\wSV}{\w{\bfS_{\homo}(\II_{\itLamb}, V)}} 
\newcommand{\wS}{\w{\bfS_{\homo}(\II_{\itLamb})}} 

\section{Introduction}\label{sec:intro}

The classical theory of ordinary differential equations (ODEs) is built over the field $\RR$ (or $\CC$):
the domain is an interval $[a,b] \subseteq \RR$, and the unknown takes values in a Banach space.
While this framework encompasses the vast majority of applications,
it does not account for the algebraic structure of the domain.
Replacing $\RR$ by a finite-dimensional $\kk$-algebra $\itLamb$ raises the question of how
the representation theory of $\itLamb$ governs the structure of differential equations and their solutions.

Categorical or algebraic approaches to calculus have developed along several lines,
including integration \cite{Sag1965ana, CL2018int, CL2018Cart-int, Lei2023FA},
differentiations \cite{Lemay2019, Lemay2021exp, ML2021diff, Lemay2023, IL2023ana},
\checks{and} equations \cite{Kashiwara1995, CCL2021, HTT2008}.
Note that the Rota-Baxter algebra \cite{Bax1960,Rot1969-1,Rot1969-2} provides another algebraic description of integration,
but it is different from the categorification of integration given by the \checks{categorical approaches} provided in \cite{Lei2023FA,LLHZ2025}.
Several important works deserve to be mentioned in this context. For example,
Ehrhard and Regnier \cite{ER2003} introduced differential operations in linear logic;
Blute, Cockett and Seely \cite{BCS2006diff} developed the theory of differential categories;
Cockett, Cruttwell and Lemay \cite{CCL2021} defined differential equations in tangent categories via curve objects;
and Lemay \cite{Lemay2021exp} studied exponential functions in Cartesian differential categories.
These frameworks operate at a high level of abstraction and have not been instantiated on finite-dimensional algebras.
We note that the algebra $\itLamb$ in our setting is finite-dimensional,
and its module category $\modcat(\itLamb)$ possesses rich representation-theoretic structure,
which distinguishes the present work from $D$-module theory \cite{Kashiwara1995, HTT2008}
where the differential operator ring is infinite-dimensional.
On the other hand, Leinster \cite{Lei2023FA} obtained a categorical derivation of the Lebesgue integral
as the unique morphism from the initial object of a category $\scrA^p$.
Building on Leinster's work, the authors and collaborators extended the framework to finite-dimensional algebras.
In \cite{LLHZ2025}, the normed module category $\scrN^p$ and the integral Banach module category $\scrA^p$
of a finite-dimensional $\kk$-algebra $\itLamb$ were defined,
the Lebesgue integral was realized as the unique morphism from the initial object,
and the differential operator $\Dop$ was identified as a morphism in $\scrA^1$ between two distinct objects,
revealing a categorical asymmetry between integration and differentiation.
Subsequently, \cite{GLLWpre} established transformation formulas between integrals on different algebras,
and \cite{LLL2025cat3} defined integrals with variable upper limits via integral partial ordered sets
and proved a categorical fundamental theorem of calculus.

Assume $\itLamb$ is a finite-dimensional algebra defined on a complete field $\kk$,
\checks{that} $\kk$ is an extension of $\RR$, and \checks{let} $V$ be a Banach $\itLamb$-module
(all $\itLamb$-modules in this paper are left $\itLamb$-modules).
In this paper, we define and study ordinary differential equations within this categorical calculus framework.
We construct $V$-valued function spaces as objects of $\scrA^p$ (Proposition \ref{prop:V-object}),
introduce the $V$-valued differential operator $\DV$,
the i.poset $\w{T}_{x_0,V}^x(f)$ of $f$,
and formulate categorical ODEs for $V$-valued functions (Definition \ref{def:ODE}).
The main results are as follows.

\begin{theorem} \label{thm:260430}
Under the generalized L-condition {\rm(}see \ref{L1}--\ref{L6} in this paper{\rm)},
let $\itLamb$ be a finite-dimensional algebra, $V$ be a finitely generated Banach $\itLamb$-module,
and $\w{T}_{x_0,V}^{x}(\DV f) = f(x)-f(x_0)$
holds for all absolute continuous functions $f\in\wSV$ and $x_0\preceq x$ {\rm(}see Assumption \ref{assump}{\rm)}\checks{.}
\checks{Then} the $V$-valued initial value problem
\begin{align}\label{eq in main thm}
  \DV f(x) = G(x, f(x)), ~ f(x_0)=f_0
\end{align}
satisfies the following statements.
\begin{enumerate}[label=\text{\rm(\arabic*)}]
\item {\rm \textbf{Equivalence} (Theorem \ref{thm:equivalence})}~
The equation {\rm(\ref{eq in main thm})} is equivalent to
\begin{center}
  $f(x)=f_0+\w{T}_{x_0,V}^x(G(\cdot,f(\cdot)))$.
\end{center}

\item {\rm\textbf{Existence and uniqueness} (Theorem \ref{thm:existence})}~
If $G$ is a function $[c,d]_{\itLamb} \times V \to V$ satisfying the Lipschitz condition with Lipschitz constant $\checks{K}$,
then {\rm(\ref{eq in main thm})} has a unique solution.

\item {\rm\textbf{Solution space structure} (Theorem \ref{thm:sol-module})}~
The solution space, say $\Sol$, of the equation $\DV f + A(x)\checks{\cdot_V} f = 0$ is a finitely generated $\itLamb$-module.
\end{enumerate}
\end{theorem}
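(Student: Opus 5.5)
We prove the three parts in order; (2) and (3) both reduce to the integral formulation in (1). Throughout we rely on three facts: the standing hypothesis $\w{T}_{x_0,V}^{x}(\DV f)=f(x)-f(x_0)$ for absolutely continuous $f$ (Assumption \ref{assump}); the fact that $\wSV$ is an object of $\scrA^p$ (Proposition \ref{prop:V-object}), hence a Banach space; and the linearity and monotone norm estimate of the variable-upper-limit integral $\w{T}_{x_0,V}^x$ from the i.poset framework.

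\textbf{(1) Equivalence.} Suppose first that $f$ solves (\ref{eq in main thm}). Then $f$ is absolutely continuous, so applying $\w{T}_{x_0,V}^x$ to $\DV f=G(\cdot,f(\cdot))$ and using the hypothesis gives $f(x)-f_0=\w{T}_{x_0,V}^x(G(\cdot,f(\cdot)))$. Conversely, suppose $f=f_0+\w{T}_{x_0,V}^x(g)$ with $g=G(\cdot,f(\cdot))$. We apply $\DV$ and need the other half of the fundamental theorem of calculus, namely $\DV\,\w{T}_{x_0,V}^x(g)=g$. We would obtain this from the categorical FTC of \cite{LLL2025cat3}, extended coordinatewise to $V$. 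Here we use that $V$ is finitely generated, so $V\cong \itLamb^{n}/U$ and $V$-valued functions reduce to finitely many $\kk$-valued components. Evaluating at $x=x_0$ gives $f(x_0)=f_0$.

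\textbf{(2) Existence and uniqueness.} Define $\Phi(f)(x)=f_0+\w{T}_{x_0,V}^x(G(\cdot,f(\cdot)))$ on the continuous functions $[c,d]_{\itLamb}\to V$ with the sup norm. Since $G$ is $K$-Lipschitz,
\[
\|\Phi f(x)-\Phi h(x)\|\le K\,\w{T}_{x_0}^x(\|f-h\|).
\]
Iterating gives $\|\Phi^n f-\Phi^n h\|_\infty\le \frac{(KL)^n}{n!}\|f-h\|_\infty$, where $L$ is the measure of $[c,d]_{\itLamb}$; alternatively one can use a weighted Bielecki norm. Hence some iterate of $\Phi$ is a contraction. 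Banach's fixed point theorem then gives a unique fixed point, and by (1) this is the unique solution. \textbf{Main obstacle:} the iteration estimate requires that the i.poset integral satisfy $\w{T}^x_{x_0}\big((\w{T}^{\cdot}_{x_0}1)^{n}\big)=(\w{T}^x_{x_0}1)^{n+1}/(n+1)$. If this is unavailable along the partial order $\preceq$, we would instead restrict to a short subinterval where $KL<1$ and patch the local solutions together along chains.

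\textbf{(3) Solution space.} Take $G(x,v)=-A(x)\cdot_V v$, which is Lipschitz provided $A$ is bounded. Since $\DV$ and $\w{T}$ are $\itLamb$-linear and $A(x)$ acts through $\EndL$-compatible maps, $\Sol$ is a $\itLamb$-submodule. By (2), the evaluation map $\mathrm{ev}_{x_0}\colon \Sol\to V$, $f\mapsto f(x_0)$, is bijective: it is injective by uniqueness and surjective by existence. It is also $\itLamb$-linear. Therefore $\Sol\cong V$, and in particular $\Sol$ is finitely generated because $V$ is.
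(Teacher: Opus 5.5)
Your proposal is correct. Parts (1) and (3) follow the paper's argument. In (1) you use Assumption~\ref{assump} in one direction and the coordinatewise identity $\DV\,\w{T}_{x_0,V}^x(g)=g$ (Lemma~\ref{lemm:V-FTC 1}) in the other. Part (3) is Theorem~\ref{thm:sol-module} merged with Corollary~\ref{coro:sol-module}. The only difference there is that you read off finite generation from $\Sol\cong V$, whereas the paper's theorem needs only injectivity of $\mathrm{ev}_{x_0}$ and $\dim_\kk\Sol\le\dim_\kk V$. Your explicit boundedness hypothesis on $A$ makes $v\mapsto -A(x)\cdot_V v$ Lipschitz with constant $\sup_x|\homo(A(x))|$; the paper uses this silently when it invokes Theorem~\ref{thm:existence}.

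The real difference is in (2). The paper runs the Picard sequence and bounds successive differences by $MK^n\calH([x_0,x]_{\itLamb})^{n+1}/(n+1)!$ (Lemma~\ref{lem:Picard-diff}). It then sums the series pointwise, passes to the limit under $\w{T}_{x_0,V}^x$, and proves uniqueness separately through Lemma~\ref{lem:Gronwall}, argued by differentiating $\w{T}_{x_0}^x(\phi)\,\mathrm{e}^{-K(x-x_0)}$. You instead make an iterate of $\Phi$ a contraction on $C([c,d]_{\itLamb},V)$ with the sup norm.

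Both routes hinge on the identity $\w{T}_{x_0}^x\big(\calH([x_0,\cdot]_{\itLamb})^n\big)=\calH([x_0,x]_{\itLamb})^{n+1}/(n+1)$, which you flagged as the main obstacle. The paper supplies it in Lemma~\ref{lemm:0511} by reparametrizing with $t=\calH([x_0,x]_{\itLamb})$, so your patching fallback is unnecessary.

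What your route buys:
\begin{itemize}
\item Existence and uniqueness come from a single estimate.
\item Convergence is uniform, so the limit passes through $\w{T}_{x_0,V}^x$ automatically; the paper asserts this from pointwise convergence only.
\item It avoids Gronwall, whose proof in the paper needs a product rule for $\Dop$. It also uses an exponential weight in $x-x_0\in\itLamb$, which only makes sense once $x-x_0$ is replaced by $\calH([x_0,x]_{\itLamb})$.
\end{itemize}
The paper's route, in exchange, gives an explicit constructive approximation with the concrete bound $\frac{M}{K}\big(\mathrm{e}^{K\calH([x_0,x]_{\itLamb})}-1\big)$.

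One step in (3) should be justified rather than asserted: that $v\mapsto A(x)\cdot_V v$ commutes with the $\itLamb$-action. For a general non-commutative action this fails. Here it holds because the $\homo$-normed inequality $\|a\cdot v\|_V\le|\homo(a)|\,\|v\|_V$ forces $\Ker\homo$ to annihilate $V$, so every $a$ acts on $V$ as the scalar $\homo(a)$. The paper avoids the point in Lemma~\ref{Lemm:main 3-2} by letting $a$ act on the function factor, which again means acting through $\homo(a)$.
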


Furthermore, we have the following corollary which describes the solution space $\Sol$ of $\DV f + A(x)\checks{\cdot_V} f = 0$.

\begin{corollary}[{\rm Corollary \ref{coro:sol-module}}]
\checks{Keeping} the notations from Theorem \ref{thm:260430}.
Then there is an isomorphism of $\itLamb$-modules
\[\Sol \cong V = \bigoplus_{i\in I} V_i \]
{\rm(}where $\{V_i\}_{i\in I}$ is the family running over all indecomposable direct summands of $V${\rm)},
which admits that each solution of {\rm(\ref{eq in main thm})} can be described by using the generators of $V$.
\end{corollary}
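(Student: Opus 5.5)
The natural approach is to take the results of Theorem~\ref{thm:260430} as given and build an explicit isomorphism $\Sol\to V$ by evaluating at the initial point. Fix $x_0\in[c,d]_{\itLamb}$ and define
\[
\mathrm{ev}_{x_0}\colon \Sol\longrightarrow V,\qquad f\longmapsto f(x_0).
\]

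\textbf{Step 1: $\Sol$ is a $\itLamb$-submodule and $\mathrm{ev}_{x_0}$ is $\itLamb$-linear.} First check that $\Sol$ is closed under sums and under the action $(\lambda f)(x)=\lambda\cdot_V f(x)$. This needs three facts.
\begin{itemize}
\item $\DV$ is a morphism in $\scrA^1$, hence $\itLamb$-linear.
\item The map $f\mapsto A(x)\cdot_V f$ commutes with the $\itLamb$-action. This holds because $A(x)$ acts through $\EndL(V)$ in the paper's setup.
\item Linearity of $\mathrm{ev}_{x_0}$ is then immediate.
\end{itemize}
This is the content already packaged in part~(3) of Theorem~\ref{thm:260430} (Theorem~\ref{thm:sol-module}), so I will simply invoke it.

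\textbf{Step 2: injectivity.} Suppose $f\in\Sol$ and $f(x_0)=0$. Then $f$ solves the initial value problem $\DV f=G(x,f)$ with $G(x,v)=-A(x)\cdot_V v$ and $f_0=0$. The zero function also solves this problem.

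This $G$ is Lipschitz in $v$ with constant $K=\sup_x\|A(x)\|$. That supremum is finite because $A$ is bounded on the compact $[c,d]_{\itLamb}$, or bounded by hypothesis. By the uniqueness part of Theorem~\ref{thm:existence}, $f=0$.

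\textbf{Step 3: surjectivity.} Take any $v\in V$. Theorem~\ref{thm:existence}, applied with $f_0=v$ and the same Lipschitz $G$, gives a solution $f$ with $f(x_0)=v$. Therefore $\mathrm{ev}_{x_0}$ is a $\itLamb$-module isomorphism $\Sol\cong V$.

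\textbf{Step 4: decomposition and generators.} Since $V$ is finitely generated over the finite-dimensional algebra $\itLamb$, it is finite-dimensional over $\kk$. By Krull--Schmidt it decomposes as $V=\bigoplus_{i\in I}V_i$ into indecomposables, with $I$ finite.

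Transporting this decomposition along $\mathrm{ev}_{x_0}^{-1}$ gives $\Sol=\bigoplus_i \mathrm{ev}_{x_0}^{-1}(V_i)$. Moreover, if $v_1,\dots,v_n$ generate $V$, let $f_j$ be the unique solution with $f_j(x_0)=v_j$. Then every solution has the form
\[
f=\sum_j\lambda_j f_j,\qquad \lambda_j\in\itLamb,
\]
namely the one with $f(x_0)=\sum_j\lambda_j v_j$. This is exactly the claimed description of solutions by generators of $V$.

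\textbf{Main obstacle.} The delicate step is Step~1 together with the Lipschitz check in Step~2. I must confirm that the paper's $A(x)\cdot_V$ genuinely commutes with the $\itLamb$-action, so that the solution set is a submodule rather than just a $\kk$-subspace. I must also confirm that $\|A(x)\cdot_V v\|\le K\|v\|$ holds uniformly on $[c,d]_{\itLamb}$. Both should follow from the definition of the tensor-product action on the Banach module and the boundedness assumptions on $A$.
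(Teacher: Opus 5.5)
Your proposal is essentially the paper's proof: $\mathrm{ev}_{x_0}$ is $\itLamb$-linear, surjective by the existence part of Theorem~\ref{thm:existence} and injective by its uniqueness part, and the Krull--Schmidt decomposition of $V$ is transported back along it. Your Lipschitz check (which needs $A$ bounded, something the paper also assumes only tacitly) and your generator description fill in details the paper omits. The point you flag in Step~1 is settled by the paper's own definitions: the $\homo$-normed inequality $\|a\cdot v\|_V\le|\homo(a)|\,\|v\|_V$ forces $(\ker\homo)\cdot V=0$, hence $a\cdot v=\homo(a)v$ for all $a\in\itLamb$, so $A(x)\cdot_V$ is multiplication by the scalar $\homo(A(x))$, commutes with the $\itLamb$-action (this is also why the action $af_i\otimes e_i$ used in Lemma~\ref{Lemm:main 3-2} agrees with your pointwise one), and is Lipschitz with constant $\sup_x|\homo(A(x))|$.
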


The above result is a corollary of Theorem \ref{thm:260430},
which can be proved by using the Krull--Schmidt decomposition of finitely generated $\itLamb$-module
\checks{\cite[Chap.~I, Theorem 4.10]{ASS2006}}.

\checks{This} paper is organized as follows.
Section \ref{sec:prelim} recalls the necessary background from \cite{LLHZ2025, GLLWpre, LLL2025cat3}.
Section \ref{sec:V-ODE} introduces $V$-valued function spaces and defines categorical ODEs.
Section \ref{sec:equiv} proves the equivalence of the differential and integral equation forms.
Section \ref{sec:exist} establishes existence and uniqueness, and analyzes the $\itLamb$-module structure of the solution space.

\section{Preliminaries}\label{sec:prelim}

This section reviews the basic definitions and key results from the earlier works \cite{LLHZ2025, GLLWpre, LLL2025cat3},
laying the foundation for the developments in subsequent sections.
We assume the reader is familiar with the basic language of category theory (see \cite{Eil1945, Mac1998}) and the representation theory of finite-dimensional algebras (see \cite{ASS2006}).

\subsection{Normed modules over finite-dimensional algebras}\label{subsec:algebra}

Let $\kk$ be a complete field, i.e., every Cauchy sequence in $\kk$ converges,
and suppose $\kk$ contains a totally ordered subset $[r,s]_{\kk}$, where $r, s\in \kk$ satisfy $r \preceq s$.
Typical examples are $\kk = \RR$ or $\CC$.

\begin{definition}[{\!\!\cite[Section 2]{LLHZ2025}}]\label{def:fd-algebra} \rm
Let $\itLamb$ be a finite-dimensional $\kk$-algebra, i.e., $\dim_{\kk}\itLamb = n < \infty$ as a $\kk$-linear space.
Fix a $\kk$-basis $B_{\itLamb} = \{b_1, b_2, \ldots, b_n\}$ of $\itLamb$.
\begin{enumerate}[label=\textrm{(\arabic*)}]
\item The \defines{integration region} is defined as
\begin{align}\label{eq:int-region}
  \II_{\itLamb} := [r,s]_{\kk}\cdot b_1 \times [r,s]_{\kk}\cdot b_2 \times \cdots \times [r,s]_{\kk}\cdot b_n
  \subseteq \itLamb.
\end{align}
\item The \defines{$p$-norm} on $\itLamb$ ($p \>= 1$) is defined by
\begin{align}\label{eq:norm-Lambda}
  \Big\|\sum_{i=1}^n k_i b_i\Big\|_p := \Big(\sum_{i=1}^n |k_i|^p\Big)^{1/p}, \quad k_i \in \kk.
\end{align}
\end{enumerate}
\end{definition}

Then $\itLamb$, as a $\kk$-linear space with $\Vert\cdot\Vert_p: \itLamb \to \RR^{\>=0}$, is a normed space.
Furthermore, if $\itLamb$ is complete, then it is a Banach algebra.
Given a measure $\mu:\kk\to\RR^{\>=0}$ defined on $\kk$\checks{,}
\checks{it} induces a measure $\mu_{\II_{\itLamb}}$ defined on the integration region $\II_{\itLamb}$, which satisfies
\begin{align}\label{eq:measure}
  \displaystyle \mu_{\II_{\itLamb}}(\II_{\itLamb}) := \prod_{i=1}^n \mu([r,s]_{\kk})
\end{align}
We will write $\mu_{\II_{\itLamb}}$ as $\mu$ for simplification \checks{when there is no confusion}.

\begin{definition}[{$\homo$-normed $\itLamb$-module, \cite[Section 4]{LLHZ2025}, \cite[Definition 3.2]{L2025normed}}]
\label{def:normed-module} \rm
Let $\homo: \itLamb \to \kk$ be a surjective $\kk$-algebra homomorphism.
A \defines{$\homo$-normed left $\itLamb$-module} (or a \defines{$\homo$-normed left $(\itLamb,\kk)$-bimodule}) is a pair $(M, \|\cdot\|)$, where $M$ is a left $\itLamb$-module
and $\|\cdot\|: M \to \RR^{\>= 0}$
is a norm such that $\|a \cdot m\| \=< |\homo(a)| \cdot \|m\|$ holds for all $a \in \itLamb$ and $m \in M$.
Furthermore, if $M$ is complete with respect to the norm $\|\cdot\|$ (i.e., every Cauchy sequence converges), then $(M, \|\cdot\|)$ is called a \defines{Banach left $\itLamb$-module}.
\end{definition}

For simplification, all $\itLamb$-modules in this paper are left $\itLamb$-modules.
Next, we recall that an important example of \checks{a} $\homo$-normed module
is the module of all elementary simple functions defined on $\II_{\itLamb}$.

\begin{definition}[{Elementary simple function, \cite[Section 3]{LLHZ2025}}]\label{def:simple-func} \rm
An \defines{elementary simple function} defined on $\II_{\itLamb}$ is a finite sum
\[ f = \sum_{i} k_i\id_{I_i}, \]
where $k_i \in \kk$; $I_i \subseteq \II_{\itLamb}$, and $\bigcup_{i}I_i = \II_{\itLamb}$;
and $\id_{I_i} := \begin{cases}
 1~(\in\kk), & x\in I_i; \\ 0, & x\notin I_i
\end{cases}$ is the indicator function (characteristic function) of $I_i$.
Furthermore, the set of all elementary simple functions is denoted by $\bfS_{\homo}(\II_{\itLamb})$.
\end{definition}

By \cite[Section 4]{LLHZ2025}, $\bfS_{\homo}(\II_{\itLamb})$ is a $\homo$-normed $\itLamb$-module,
whose $\itLamb$-module structure is defined by
\begin{align}\label{eq:module-action-S}
  a. f := (\homo(a) f: x\mapsto \homo(a)f(x)), \quad \forall a \in \itLamb, f \in \bfS_{\homo}(\II_{\itLamb}),
\end{align}
and whose norm is given by $\|f\|_1 := \sum_i |k_i|\mu(I_i)$, where $\mu$ is a measure on $\II_{\itLamb}$ satisfying (\ref{eq:measure}).
Its \defines{completion} is denoted by $\wS$, which is a Banach $\itLamb$-module.

\subsection{The categories $\scrN^p$ and $\scrA^p$}\label{subsec:cats}

Assume $p \>= 1$ is a real number.

\begin{definition}[{\!\!\cite[\checks{Section 5.1}]{LLHZ2025}}]
\label{def:Ap} \rm \checks{The} \defines{integral normed module category}
$\scrN^p := (\scrN^p, \homo:\itLamb \to \kk, \II_{\itLamb}, \mu)$ of $\itLamb$ is defined as follows:
\begin{enumerate}[label=\textrm{\checks{(\arabic*)}}]
  \item \defines{Objects} are triples $(N, v, \delta)$, where
    \begin{enumerate}[label=\textrm{\checks{(\roman*)}}]
      \item $N$ is a $\homo$-normed $\itLamb$-module;
      \item $v \in N$, $\|v\| \=< \mu(\II_{\itLamb})$;
      \item $\delta: N^{\oplus_p 2^n} \to N$ is a homomorphism of $\itLamb$-modules
        satisfying $\delta(v, v, \ldots, v) = v$,
            and $\delta$ commutes with inverse limits of Cauchy sequences.
    \end{enumerate}
  \item \defines{Morphisms} $(N, v, \delta) \to (N', v', \delta')$ are homomorphisms $\theta: N \to N'$
    of $\itLamb$-modules satisfying $\theta(v) = v'$ and such that the diagram
    \[
    \xymatrix{
      N^{\oplus_p 2^n} \ar[r]^{\delta} \ar[d]_{\theta^{\oplus 2^n}} & N \ar[d]^{\theta} \\
      N'^{\oplus_p 2^n} \ar[r]_{\delta'} & N'
    }
    \]
    commutes.
\end{enumerate}

Furthermore, \checks{the} \defines{integral Banach module category}
$\scrA^p:= (\scrA^p, \homo:\itLamb \to \kk, \II_{\itLamb}, \mu)$ of $\itLamb$
is the full subcategory of $\scrN^p$ of all objects $(N, v, \delta)$
where $N$ is a Banach $\itLamb$-module.
\end{definition}

\subsection{Initial objects and the integral morphism}\label{subsec:initial}

Let $\mathcal{C}$ be a category and $\mathcal{D}$ be a full subcategory of $\mathcal{C}$.
A $\mathcal{C}$-initial object in $\mathcal{D}$ is an object $X$ in $\mathcal{C}\backslash\mathcal{D}$
\checks{(that is, $X$ is an object of $\mathcal{C}$ that does not lie in the full subcategory $\mathcal{D}$)}
such that for each object $Y$ in $\mathcal{C}$, the Hom space $\Hom_{\mathcal{C}}(X,Y)$
contains only one morphism. Furthermore, the initial object of $\mathcal{C}$ is a $\mathcal{C}$-initial
since $\mathcal{C}$ is a trivial full subcategory of itself, cf. \cite[Page 216]{R1979}.
The following theorem is the key starting point of this paper.

\begin{theorem}[{\!\!\cite[Theorem 6.3]{LLHZ2025}}]\label{thm:initial}
The category $\scrN^p$ has a unique $\scrA^p$-initial object which is of the form
$(\bfS_\homo(\II_\itLamb), \id_{\II_\itLamb}, \gamma_\xi)$\checks{.}
\checks{Here} $\gamma_\xi: \bfS_\homo(\II_\itLamb)^{\oplus_p 2^n} \to \bfS_\homo(\II_\itLamb)$
is a special homomorphism which is called the juxtaposition map in \cite{Lei2023FA} and \cite{LLHZ2025}.
\end{theorem}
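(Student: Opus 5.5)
The argument is a categorical-integration analogue of Leinster's construction: I would build the unique morphism out of $(\bfS_\homo(\II_\itLamb), \id_{\II_\itLamb}, \gamma_\xi)$ by hand, on indicator functions, and then show that any other $\scrA^p$-initial object is isomorphic to this one. The proof splits into three parts: (a) the triple is an object of $\scrN^p$ that does not lie in $\scrA^p$; (b) for every object $(N,v,\delta)$ of $\scrN^p$ there is exactly one morphism $(\bfS_\homo(\II_\itLamb), \id_{\II_\itLamb}, \gamma_\xi)\to (N,v,\delta)$; (c) uniqueness of such an object up to isomorphism.

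\textbf{Part (a).} I would define $\gamma_\xi$ by subdividing $\II_\itLamb$ at the midpoint $\xi$ of each coordinate interval $[r,s]_\kk\cdot b_i$. This cuts $\II_\itLamb$ into $2^n$ sub-boxes $\II_1,\dots,\II_{2^n}$, each affinely identified with $\II_\itLamb$ via maps $\phi_j:\II_\itLamb\to\II_j$. Then set
\[\gamma_\xi(f_1,\dots,f_{2^n})(x)=f_j(\phi_j^{-1}(x)) \quad\text{for } x\in\II_j.\]
Checks needed:
\begin{itemize}
\item $\gamma_\xi$ is $\itLamb$-linear, because the action is $a.f=\homo(a)f$.
\item $\gamma_\xi(\id,\dots,\id)=\id$.
\item $\|\id_{\II_\itLamb}\|_1=\mu(\II_\itLamb)$.
\item $\gamma_\xi$ is continuous with respect to the $p$-sum norm, after rescaling the measure by $2^{-n}$, so it commutes with limits of Cauchy sequences.
\end{itemize}
Non-completeness of $\bfS_\homo(\II_\itLamb)$, so that it lies outside $\scrA^p$, follows from a standard Cauchy sequence of step functions converging in $L^1$ to a non-step function.

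\textbf{Part (b), the key step.} Given $(N,v,\delta)$, a morphism $\theta$ is forced on dyadic indicators. Iterating the square gives $\theta(\id_{\II_\itLamb})=v$, and for the dyadic box $\phi_{j_1}\cdots\phi_{j_m}(\II_\itLamb)$,
\[\theta(\id_{\phi_{j_1}\cdots\phi_{j_m}(\II_\itLamb)})=\delta(0,\dots,\theta(\id_{\phi_{j_2}\cdots}),\dots,0),\]
provided $\delta(0,\dots,0)=0$, which holds by linearity. So $\theta$ is determined on dyadic step functions, and by $\kk$-linearity (through $\homo$ surjective) on their span. For a general elementary simple function, $\id_I$ is approximated by dyadic step functions in $\|\cdot\|_1$.

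Extending $\theta$ from dyadic step functions is where I expect the main obstacle. $N$ is not assumed complete and $\theta$ is not assumed bounded. I would try to use the condition that $\delta$ commutes with inverse limits of Cauchy sequences to pass to limits, and the bound $\|v\|\=<\mu(\II_\itLamb)$ together with $\|a\cdot m\|\=<|\homo(a)|\|m\|$ to show that the dyadic definition is contractive: $\|\theta(g)\|\=<\|g\|_1$. This is done by induction on dyadic depth using $\delta$'s compatibility with the $p$-sum norm. Contractivity gives well-definedness, existence and uniqueness at once. If the general $I_i$ are restricted, as in \cite{LLHZ2025}, to boxes compatible with the dyadic structure, this step simplifies to a purely algebraic recursion.

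\textbf{Part (c).} Uniqueness follows from the usual argument. If $X$ and $X'$ are both $\scrA^p$-initial, the unique morphisms $X\to X'\to X$ compose to the unique endomorphism of $X$, which is the identity. Hence $X\cong X'$, and the object has the stated form.
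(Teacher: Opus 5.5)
The paper gives no proof of this statement. It is quoted from \cite[Theorem 6.3]{LLHZ2025}, which adapts Leinster's argument, so your proposal has to stand on its own. Part (a) and the forcing computation on $\xi$-adic boxes in (b) are fine. One correction: when $\xi$ is not the midpoint, the weights in the $p$-sum norm are $\mu(\II_j)/\mu(\II_\itLamb)$, not $2^{-n}$.

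\textbf{The gap.} It is in the step you flag as the main obstacle, and it is not merely technical. If $\bfS_\homo(\II_\itLamb)$ contains non-$\xi$-adic boxes, then (b) as you state it is false.

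\emph{Existence fails.} It cannot come from contractivity, because objects of $\scrN^p$ need not be complete. Concretely, take $\itLamb=\kk=\RR$, $\II_\itLamb=[0,1]$, $\xi=\frac12$, $p=1$. Let $N$ be the dyadic step functions, with $v=\id$ and $\delta=\gamma_{1/2}|_N$; this is an object of $\scrN^1$ outside $\scrA^1$. We have $\id_{[0,1/3)}=\gamma_{1/2}(\id_{[0,2/3)},0)$ and $\id_{[0,2/3)}=\gamma_{1/2}(\id,\id_{[0,1/3)})$. Hence any morphism $\theta$ must send $\id_{[0,1/3)}$ to a fixed point in $N$ of the map $y\mapsto\gamma_{1/2}(\gamma_{1/2}(\id,y),0)$. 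That map is an affine contraction of $L^1[0,1]$ with constant $\frac14$, and its only fixed point is $\id_{[0,1/3)}\notin N$. So there is no morphism at all, bounded or not.

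\emph{Uniqueness does not follow either.} Your density argument needs a competing morphism to be continuous. But morphisms in Definition \ref{def:Ap} are only $\itLamb$-linear and compatible with $v$ and $\delta$. For irrational $t$, the value $\theta(\id_{[0,t)})$ is tied to the dyadic part only through an infinite chain of relations. Perturbing the integral along the grand orbit of $t$ under the doubling map gives a second, unbounded morphism into $(\RR,1,m)$.

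\emph{The contractivity estimate needs an extra axiom.} The bound $\|\theta(g)\|\leqslant\|g\|_1$ requires $\delta$ to be short for the weighted $p$-sum norm, and that is not among the axioms as recalled. The object $(\RR,1,\delta)$ with $\delta(x,y)=2x-y$ satisfies all of them, with $\|v\|=1\leqslant\mu(\II_\itLamb)$. Yet the forced values are $\theta(\id_{[0,2^{-k})})=2^k$, so $\theta$ is unbounded and has no continuous extension to $\wS$.

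\textbf{What you can prove.} Your route works for the version in your last sentence of (b): $\bfS_\homo(\II_\itLamb)$ must consist of step functions on the $\xi$-adic boxes $\phi_{j_1}\cdots\phi_{j_m}(\II_\itLamb)$. Initiality in all of $\scrN^p$ is then purely algebraic. Its content is a well-definedness check that you have not given.
\begin{itemize}
\item Define $\theta_k$ on functions constant on depth-$k$ boxes by $\theta_0(c\,\id)=c\,v$ and $\theta_k(\gamma_\xi(g_1,\ldots,g_{2^n}))=\delta(\theta_{k-1}(g_1),\ldots,\theta_{k-1}(g_{2^n}))$. This uses that such a function decomposes uniquely in this form, with each $g_j$ of depth $k-1$.
\item Prove by induction, from $\delta(v,\ldots,v)=v$ and linearity, that $\theta_k$ restricts to $\theta_{k-1}$.
\item The union is then a $\itLamb$-homomorphism commuting with $\delta$. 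Uniqueness is exactly your recursion.
\end{itemize}

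\textbf{Part (c).} Quantifying over all of $\scrN^p$ is also what makes your part (c) legitimate, since the zig-zag $X\to X'\to X$ uses $X'$ as a target. If one only tests against objects of $\scrA^p$, as Remark \ref{rmk:initial}(1) suggests, then $X'\notin\scrA^p$ and the argument breaks.

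\textbf{Role of contractivity.} Contractivity, obtained from shortness of $\delta$ and $\|v\|\leqslant\mu(\II_\itLamb)$, is needed only afterwards. It is what extends $\theta$ to $\wS$ for targets in $\scrA^p$, which is Leinster's completion step.
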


\begin{remark}\rm \label{rmk:initial}
In \cite{Lei2023FA}, the author considered a special case of $\scrA^p$
which \checks{was the} integral Banach module category of $\itLamb=\RR$.
In this case, $\II_{\itLamb}=[0,1]$, $\xi=\frac{1}{2}$,
and $\gamma_{\xi}=\gamma_{\frac{1}{2}}$ is the homomorphism induced by
\[ (f,g) \mapsto
\begin{cases}
  f(2x) & \displaystyle 0\=< x < \frac{1}{2}; \\
g(2x-1) & \displaystyle \frac{1}{2} \=< x \=< 1.
\end{cases} \]
Theorem \ref{thm:initial} \checks{induces} the following facts:
\begin{enumerate}[label=\textrm{(\arabic*)}]
  \item \label{thm:initial adm 1}
For any object $(N, v, \delta)$ in $\scrA^p$, there exists a unique $\scrN^p$-morphism
\[ T_{(N,v,\delta)}: (\bfS_\homo(\II_\itLamb), \id_{\II_\itLamb}, \gamma_\xi) \to (N, v, \delta), \]
such that it can be extended uniquely to \checks{a} homomorphism
\[ \w{T}_{(N,v,\delta)}: (\wS, \id_{\II_\itLamb}, \w{\gamma}_\xi) \to (N, v, \delta)\]
in $\Hom_{\scrA^p}((\wS, \id_{\II_\itLamb}, \w{\gamma}_\xi), (N, v, \delta))$.
  \item \label{thm:initial adm 2}
When the target object $(N, v, \delta)$ is taken to be $(\kk, \mu(\II_\itLamb), m)$
(where $m:\kk^{\oplus 2^n} \to \kk$ \checks{sends} each elements $\pmb{x}=(x_1,x_2,\ldots, x_{2^n})$ to
the weighted average of $x_1$, $x_2$, $\ldots$, $x_{2^n}$, see \cite{LLHZ2025} for details),
the homomorphism $\w{T}_{(\kk, \mu(\II_\itLamb), m)}$ yields \checks{an} \defines{integration}
in the case for $p=1$, \checks{as follows}:
\begin{align}\label{eq:integration}
 \forall f \in \wS, ~~  (\scrA^1)\int_{\II_\itLamb} f \dd\mu \ := \ \w{T}_{(\kk, \mu(\II_\itLamb), m)}(f).
\end{align}
  \item \label{thm:initial adm 3}
Under the Leinster's conditions (L-condition)
(cf. \cite[Example 3.9]{LLL2025cat3}),
(\ref{eq:integration}) is a Lebesgue integration (see \cite[Theorem 7.6]{LLHZ2025}),
and, in this case,
\begin{center}
  $\displaystyle \w{T}_{(\kk, \mu(\II_\itLamb), m)}(f) = (\Lebesgue) \int_{\II_A=[c,d]} f\dd\mu$
\end{center}
is the Lebesgue integral of $f$.
\end{enumerate}

\end{remark}

\subsection{The differential operator $\Dop$}\label{subsec:diff}

\begin{theorem}[{\!\!\cite[Theorem 9.5]{LLHZ2025}}]\label{thm:D}
Let $p = 1$, $\itLamb = \RR$ {\rm(}or $\CC${\rm)}, $\homo = \ident_\RR$, $\II_\itLamb = [0,1]$, and $\xi = \frac{1}{2}$. Then:
\begin{enumerate}[label=\text{\rm(\arabic*)}]
  \item \label{thm:D 1}
    There does not exist an $\scrA^1$-morphism $(\wS, \id, \w{\gamma}_{1/2}) \to (N, v, \delta)$ that maps $f$ to its weak derivative $\frac{\dd f}{\dd x}$.
  \item \label{thm:D 2}
    \checks{There} is a $\kk$-homomorphism $\Dop$ in
  \begin{align}\label{eq:D}
    \Hom_{\scrA^1}\big((\wS, \mathrm{id}, \w{\kappa}), (\wS, \id, \w{\gamma}_{\frac{1}{2}})\big)
  \end{align}
  such that it sends each $f\in \wS$ to the weak derivative of $f$, if $f$ lies in some Sobolev space $W^{1,1}~(\subseteq \wS)$.
  Here, $(\wS, \mathrm{id}, \w{\kappa})$ is another object in $\scrA^1$ different from $(\wS, \id, \w{\gamma}_{\frac{1}{2}})$.
\end{enumerate}
\end{theorem}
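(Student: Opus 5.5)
We use the universal property in Remark \ref{rmk:initial}\ref{thm:initial adm 1} for part \ref{thm:D 1}, and build a suitable ``integrated juxtaposition'' $\kappa$ for part \ref{thm:D 2}. Throughout, $\mathrm{id}$ is read as the identity function $x\mapsto x$ on $[0,1]$ and $\id=\id_{[0,1]}$ as the constant function $1$. This reading is forced by the requirement $\Dop(\mathrm{id})=\id$.

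For \ref{thm:D 1}, suppose $\theta:(\wS,\id,\w{\gamma}_{1/2})\to(N,v,\delta)$ is an $\scrA^1$-morphism with $\theta(f)=\frac{\dd f}{\dd x}$ for $f\in W^{1,1}$.
\begin{enumerate}[label=\textrm{(\roman*)}]
\item Since $\id$ is constant, its weak derivative is $0$, so the morphism condition gives $v=\theta(\id)=0$.
\item We show by induction on $k$ that $\theta$ kills every dyadic step function of level $k$. Such a function is $\gamma_{1/2}(g,h)$ with $g,h$ of level $k-1$, so
\[\theta(\gamma_{1/2}(g,h))=\delta(\theta g,\theta h)=\delta(0,0)=0.\]
The base case is $\theta(\id)=0$, and linearity handles sums.
\item By Theorem \ref{thm:initial} and Remark \ref{rmk:initial}\ref{thm:initial adm 1}, $\theta$ restricted to $\bfS_\homo(\II_\itLamb)$ is the unique morphism into $(N,0,\delta)$, and its extension to $\wS$ is unique. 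The zero map is such a morphism, so $\theta=0$ on all of $\wS$.
\item This contradicts $\theta(\mathrm{id})=\frac{\dd}{\dd x}x=\id\neq 0$.
\end{enumerate}
The only delicate point is (iii): we must check that the zero map is a genuine $\scrA^1$-morphism (it trivially commutes with $\delta$ and sends $\id$ to $0=v$), so that uniqueness of the extension applies.

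For \ref{thm:D 2}, define $\kappa$ on $W^{1,1}\oplus_1 W^{1,1}$ so that differentiation intertwines it with $\gamma_{1/2}$. Set
\[\kappa(f,g)(x)=\begin{cases}\tfrac12 f(2x), & 0\=< x<\tfrac12,\\[2pt] \tfrac12 g(2x-1)+\tfrac12 f(1)-\tfrac12 g(0), & \tfrac12\=< x\=< 1.\end{cases}\]
Here $f(1)$ and $g(0)$ are the traces of the absolutely continuous representatives. The constant on the right piece makes $\kappa(f,g)$ continuous at $\tfrac12$, so $\kappa(f,g)\in W^{1,1}$. The chain rule then gives $\frac{\dd}{\dd x}\kappa(f,g)=\gamma_{1/2}(f',g')$, and one checks directly that $\kappa(\mathrm{id},\mathrm{id})=\mathrm{id}$.

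Next we extend $\kappa$ and $\Dop$ from $W^{1,1}$ to all of $\wS$ as $\itLamb$-linear maps. We fix a complement $C$ of $W^{1,1}$ in $\wS$. On $C$ we let $\kappa$ act by $\tfrac12\gamma_{1/2}$ composed with rescaling, and we set $\Dop|_C$ to be a linear map intertwining the two; the simplest choice is $\Dop|_C=0$, provided the chosen $\kappa$ preserves $C$. The $\scrA^1$-morphism axioms $\Dop(\mathrm{id})=\id$ and $\Dop\compos\w{\kappa}=\w{\gamma}_{1/2}\compos(\Dop\oplus\Dop)$ then hold on each summand. Finally we verify that $(\wS,\mathrm{id},\w{\kappa})$ is an object of $\scrA^1$ distinct from $(\wS,\id,\w{\gamma}_{1/2})$. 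The two distinguished points already differ; we also need $\|\mathrm{id}\|_1=\tfrac12\=<\mu([0,1])$ and $\kappa(\mathrm{id},\mathrm{id})=\mathrm{id}$.

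The main obstacle is that $\w{\kappa}$ must commute with limits of Cauchy sequences in the $L^1$ norm. The trace terms $f(1)$ and $g(0)$ are not $L^1$-continuous, and a Hamel-type complement is not closed. I would first try to avoid traces by normalising: replace the correction constant by a bounded functional such as $\int_0^1 f'$, which equals $f(1)-f(0)$ on $W^{1,1}$. If that fails, I would weaken the requirement to sequences that converge in the $W^{1,1}$ norm, which is presumably the sense intended in \cite{LLHZ2025}.
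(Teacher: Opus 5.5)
Your argument for part (1) is correct. Since $\theta(\id)=0$, you get $v=0$, and the zero map is then an $\scrA^1$-morphism from $(\wS,\id,\w{\gamma}_{1/2})$ to $(N,0,\delta)$. Uniqueness gives $\theta=0$, which contradicts $\theta(\mathrm{id})=\id$; the dyadic induction in (ii) is not needed.

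The paper does not prove this theorem; it quotes it from the cited source. So part (2) must stand on its own, and it has a genuine gap exactly where you flag the ``main obstacle''.

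An object of $\scrA^1$ needs $\w{\kappa}$ defined on all of $\wS\oplus_1\wS$ and commuting with $L^1$-limits. Your $\kappa$ uses the traces $f(1)$ and $g(0)$. These are not defined on $L^1$, and they are not $L^1$-bounded on $W^{1,1}$: $x^n\to 0$ in $L^1$, yet its value at $1$ is $1$. Replacing the constant by $\int_0^1 f'\,\dd\mu$ does not help, because that is again $f(1)-f(0)$.

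The problem cannot be fixed inside $W^{1,1}$. Suppose an $L^1$-continuous $\kappa$ maps $W^{1,1}\oplus W^{1,1}$ into $W^{1,1}$ with $\frac{\dd}{\dd x}\kappa(f,g)=\gamma_{1/2}(f',g')$. Integrating gives
\[\kappa(f,g)=\frac{1}{2}\gamma_{1/2}(f,g)+\frac{1}{2}\big(f(1)-g(0)\big)\id_{[1/2,1]}+c(f,g)\id ,\]
which would make $g\mapsto g(0)$ bounded in $L^1$. Hence any admissible $\w{\kappa}$ must send some Sobolev pairs to non-Sobolev functions. The real content of (2) is to define $\Dop$ on those functions, and on the rest of $\wS$, so that $\Dop\circ\w{\kappa}=\w{\gamma}_{1/2}\circ(\Dop\oplus\Dop)$ holds for \emph{all} pairs.

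Your complement step does not achieve this:
\begin{itemize}
\item a map assembled separately on $W^{1,1}$ and on a Hamel complement $C$ is not $L^1$-continuous;
\item ``provided $\kappa$ preserves $C$'' is never verified;
\item mixed pairs are not treated.
\end{itemize}
Passing to $W^{1,1}$-convergent sequences changes the category, so it does not prove the statement.

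This is not a formality. Take the most natural continuous candidate,
\[\kappa(f,g)=\frac{1}{2}\gamma_{1/2}(f,g)+\Big(\int_0^1 f\,\dd\mu\Big)\id_{[1/2,1]},\]
which does satisfy $\kappa(\mathrm{id},\mathrm{id})=\mathrm{id}$.
\begin{itemize}
\item Intertwining at $(0,\id)$ forces $\Dop\id_{[1/2,1]}=0$. Hence $\Dop\gamma_{1/2}(f,g)=2\gamma_{1/2}(\Dop f,\Dop g)$, that is, $R_0\Dop=2\Dop R_0$ with $R_0h(y)=h(y/2)$.
\item Let $w\in W^{1,1}$ be the tent function of height $k^{-2}$ on each $[2^{-k-1},2^{-k}]$, and $0$ on $[\frac{1}{2},1]$. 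Put $h=-\sum_{j\ge 0}w(2^{-j}\,\cdot)$. This series converges uniformly and gives $R_0h=h+w$.
\item Then $a=\Dop h\in L^1$ must satisfy $a=\frac{1}{2}R_0a-w'$. Iterating, and using that the remainder $2^{-K}a(2^{-K}\,\cdot)$ has $L^1$-norm $\int_0^{2^{-K}}|a|\to 0$, forces $a=-\sum_{j\ge 0}2^{-j}w'(2^{-j}\,\cdot)$ in $L^1$.
\item On each dyadic shell all these terms have the same sign pattern. So the partial sums have $L^1$-norm of order $\log K$ and diverge.
\end{itemize}
Thus no linear $\Dop$ extending $\frac{\dd}{\dd x}$ is compatible with this $\w{\kappa}$. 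The missing idea is to exhibit a continuous $\w{\kappa}$ for which a compatible linear $\Dop$ does exist.
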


\begin{remark}\label{rmk:D-asymmetry} \rm \
\begin{enumerate}[label=\text{\rm(\arabic*)}]
  \item
We will not explain $(\wS, \mathrm{id}, \w{\kappa})$ in detail here because it is not necessary \checks{for this paper}. Readers only need to know that it is a special object.
  \item
Theorem \ref{thm:D} reveals the {asymmetry} between integration and differentiation at the categorical level:
the integral $\w{T}$ is a morphism emanating from the initial object $(\wS, \id, \w{\gamma}_{1/2})$,
while the differential $\Dop$ is a morphism {arriving at} that object (from $(\wS, \mathrm{id}, \w{\kappa})$).
This asymmetry is the starting point for the theory of differential equations in our framework.
\end{enumerate}
\end{remark}

\subsection{Integrals with variable upper limit}\label{subsec:FTC}


In \cite{LLL2025cat3}, the author introduced the integral partial ordered sets and \checks{used them} to describe the sequence of integrals.
In this sense, every integral with variable upper limit is a special sequence of integrals.

Given a continuous totally ordered subset $[c,d]_{\itLamb}$ of $\itLamb$,
such that there exists an embedding $h:[c,d]_{\itLamb}\to \kk$ satisfying the following conditions:
\begin{enumerate}[label=\text{\rm(X\arabic*)}]
  \item $h$ has an order-preserving extension $\tilde{h}: X \to\kk$, $X\subseteq\itLamb$, which is a bijection;
    \label{X1}
  \item $X$ is a ring whose multiplication is defined as $x_1 x_2 := h^{-1}(h(x_1)h(x_2))$
    and whose addition is defined as $x_1+x_2 := h^{-1}(h(x_1)+h(x_2))$;  \label{X2}
  \item $X$ is a $\kk$-linear space whose scalar multiplication is defined as
    $kx := h^{-1}(kh(x))$. \label{X3}
\end{enumerate}
Then $X$ is an algebra and $\tilde{h}$ is a homomorphism of algebras,
and we obtain an integral Banach module category $(\scrA^1, \tilde{h}, [c,d]_{\itLamb}, \calH) =: \widetilde{\scrA^1}$,
where $\calH$ is either a Hausdorff measure or a Lebesgue measure defined on $X$.
Let $\frakI$ be a partial ordered subset of $\widetilde{\frakI} := \{ [t_1, t_2]_{\itLamb} \subseteq [r,s]_{\itLamb} \mid
c \preceq t_1 \preceq t_2 \preceq d \}$ with the partial order ``$\subseteq$''.
An \defines{integral partial ordered set} ($=$ i.poset for short) of $f$, say $\w{T}_{\frakI}(f)$, is a poset whose elements are pairs of the form
$([t_1, t_2]_{\itLamb}, \w{T}_{t_1}^{t_2}(f))$ where:
\begin{enumerate}[label=\text{\rm(\arabic*)}]
  \item $\w{T}_{t_1}^{t_2}:= \w{T}_{(\kk,\mu([t_1,t_2]_{\itLamb}),m)}$
    is the homomorphism in $\widetilde{\scrA^1}$ from the initial object in $\widetilde{\scrA^1}$ to the object $(\kk,\mu([t_1,t_2]_{\itLamb}),m)$;
  \item and $[t_1,t_2]_{\itLamb}$ runs through all the elements of $\frakI$.
\end{enumerate}

\begin{remark} \rm
The measure $\calH([c,d]_{\itLamb})$ of $[c,d]_{\itLamb}$
can be regarded as a generalization of the length of $[c,d]_{\itLamb}$,
and $\calH$ and $\mu$ can usually be independent of each other.
In the case where $\itLamb$ possesses some good properties,
$\calH$ induces a Lebesgue measure $\mu$ defined on $\itLamb$.
\end{remark}

In general, $\calH$ and $\mu$ may be unrelated.
Therefore, in the generalized L-condition, i.e., in the cases \checks{when} the following conditions hold:
\begin{enumerate}[label=\text{\rm(GL\arabic*)}]
  \item $p=1$, \label{L1}
  \item $\kk=\RR$ or $\CC$, \label{L2}
  \item $\homo: \itLamb \to \kk$ is an epimorphism of $\kk$-algebras, \label{L3}
  \item $\II_{\itLamb} = [r,s]_{\kk}^{\times 2^{\dim\itLamb}}$ ($r \preceq s$), \label{L4}
  \item $\xi \in (r,s)_{\kk} := [r,s]_{\kk}\backslash\{r,s\}$, \label{L5}
  \item $\mu$ is a Lebesgue measure, \label{L6}
\end{enumerate}
any integrals with variable upper limit $\displaystyle (\widetilde{\scrA^1})\int_c^x f\dd\calH$ ($r\preceq c\preceq x \preceq d\preceq s$) of $f\in \wS$ is the i.poset
\begin{align}\label{def:var-int}
  \w{T}_c^x(f):=\w{T}_{\frakI=\{ [c, x]_{\itLamb} \mid
c \preceq x\preceq d \}}(f)
= \left\{\left.\left( [c,x]_{\itLamb}, ~ (\widetilde{\scrA^1})\int_c^{x} f \dd\calH\right)
~\right|~ c \preceq x\preceq d
\right\}.
\end{align}
In particular, we have the following two special cases:
\begin{enumerate}[label=\textrm{\checks{(\arabic*)}}]
\item
If \ref{L2} is replaced by $\itLamb=\kk=\RR$,
\ref{L3} is replaced by $\homo=\ident_{\itLamb}=\ident_{\kk}$,
and \ref{L4} is replaced by $\xi = \frac{r+s}{2}$,
then we \checks{say} that $\scrA^1$ satisfies L-condition.
In this case, we have $\mu=\calH$ and
\begin{center}
  $\displaystyle (\widetilde{\scrA^1})\int_c^x f\dd\calH = (\Lebesgue)\int_c^x f\dd\mu$
\end{center}
is a Lebesgue integration.
\item
If \ref{L1}--\ref{L6} hold, $\itLamb=\CC=\kk$, and $[c,d]_{\itLamb}$ is a curve
whose starting point and ending point respectively are $c, d\in \CC$,
then we have
\begin{center}
  $\w{T}_c^x(f) = \displaystyle (\widetilde{\scrA^1})\int_c^x f\dd\calH
   = \int_c^x (u(x,y)+v(x,y)\mathrm{i})\dd s$,
\end{center}
which is a line integration. Here, $u(x,y)$ and $v(x,y)$ are function of $\RR \times \RR \to \RR$,
and $\calH$ is a Hausdorff measure that dose not coincide with $\mu$.
\end{enumerate}

Leinster provided another description of integrals with variable upper limit by using $\scrA^1$ in the L-condition.
He consider the triple $(C_*([c,d]), \id:x\mapsto x, \eta)$, and point out that it is an object in $\scrA^1$
(due to Mark Meckes, see \cite[Section 2]{Lei2023FA}), where
$C_*([c,d])$ is the Banach module of continuous functions $F:[c,d]\to \RR$ such that $F(c)=0$ holds
(the norm defined on it is the sup norm) and $\eta$ is defined as
\[\eta: C_*([c,d])^{\oplus 2} \to C_*([c,d]), \]
\[ \eta(f,g)(x) := \begin{cases}
  \displaystyle \frac{1}{2}f(2x-c), &
  \displaystyle c\=< x \=< \frac{c+d}{2}; \\
  \displaystyle \frac{1}{2}(f(d)+g(2x-d)), &
  \displaystyle \frac{c+d}{2}\=< x \=< d,
\end{cases} \]
and showed the following result.

\begin{theorem}[{Leinster--Meckes \cite[Proposition 2.4]{Lei2023FA}}]\label{thm:LM2023}
Under the L-condition, there is a unique homomorphism
\[\w{T}_{(C_*([c,d]), \id, \eta)} \in \Hom_{\scrA^1}(
(L^1([c,d]), d-c, \gamma_{\frac{c+d}{2}}),
(C_*([c,d]), \id, \eta))\]
sending each $f\in C_*([c,d])$ to the Lebesgue integral with variable upper limit
$\displaystyle (\Lebesgue)\int_c^x f\dd\mu$.
Here, $L^1([c,d]) \cong \wS$ is a initial object in $\scrA^1$.
\end{theorem}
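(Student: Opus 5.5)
The plan is to reduce everything to the universal property of the initial object, Theorem \ref{thm:initial} together with Remark \ref{rmk:initial}\ref{thm:initial adm 1}, specialised to the L-condition. Here $\itLamb=\kk=\RR$, $\homo=\ident_\RR$, $\II_\itLamb=[c,d]$, $\xi=\frac{c+d}{2}$, and $\wS\cong L^1([c,d])$. The argument has two parts. First I will check that $(C_*([c,d]),\id,\eta)$ is an object of $\scrA^1$; initiality then gives existence and uniqueness of the morphism $\w{T}_{(C_*([c,d]),\id,\eta)}$. Second, I will exhibit an explicit $\scrA^1$-morphism $\Phi$ given by the variable-upper-limit Lebesgue integral, and conclude by uniqueness that $\w{T}_{(C_*([c,d]),\id,\eta)}=\Phi$. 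I read the distinguished element $\id$ of $C_*([c,d])$ as the function $x\mapsto x-c$, which is the normalisation forced by the condition $F(c)=0$.

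\textbf{Step 1: the target is an object of $\scrA^1$.} $C_*([c,d])$ is a closed subspace of $C([c,d])$ with the sup norm, hence a Banach $\RR$-module. The distinguished element satisfies $\|x-c\|_\infty=d-c=\mu([c,d])$. For $\eta$ I will verify the following.
\begin{enumerate}[label=\textrm{(\roman*)}]
\item $\eta(f,g)$ is well defined and continuous. At the midpoint the two formulas give $\frac12 f(d)$ and $\frac12(f(d)+g(c))=\frac12 f(d)$, using $g(c)=0$.
\item $\eta(f,g)(c)=\frac12 f(c)=0$, so $\eta(f,g)\in C_*([c,d])$.
\item $\eta$ is linear and satisfies $\|\eta(f,g)\|_\infty\le \frac12(\|f\|_\infty+\|g\|_\infty)$. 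It is therefore a bounded map on the $\ell^1$-sum and commutes with limits of Cauchy sequences.
\item $\eta(\id,\id)=\id$. On the left half, $\frac12(2x-c-c)=x-c$; on the right half, $\frac12\big((d-c)+(2x-d-c)\big)=x-c$.
\end{enumerate}
Initiality then yields a unique $\w{T}_{(C_*([c,d]),\id,\eta)}$ in the stated Hom set.

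\textbf{Step 2: the integral is such a morphism.} Define $\Phi(f)(x):=(\Lebesgue)\int_c^x f\,\dd\mu$ for $f\in L^1([c,d])$. By absolute continuity of the Lebesgue integral, $\Phi(f)$ is continuous and vanishes at $c$. The map $\Phi$ is linear, satisfies $\|\Phi f\|_\infty\le\|f\|_1$, and sends the constant function $\id_{[c,d]}$ to $x-c$. The remaining condition is the commutation $\Phi\circ\w{\gamma}_\xi=\eta\circ(\Phi\oplus\Phi)$. For $x\le\xi$, substituting $u=2t-c$ gives
\[
\int_c^x \gamma_\xi(f,g)(t)\,\dd t=\int_c^x f(2t-c)\,\dd t=\tfrac12\int_c^{2x-c} f(u)\,\dd u .
\]
For $x\ge\xi$, I split the integral at $\xi$ and apply the analogous substitution $u=2t-d$ on the second piece; this gives $\frac12\big(\Phi f(d)+\Phi g(2x-d)\big)$, which matches $\eta$. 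By Step 1, uniqueness gives $\w{T}=\Phi$. This is the claimed description: the morphism sends each $f\in L^1([c,d])$ (the statement's ``$f\in C_*([c,d])$'' should be read this way) to its integral with variable upper limit.

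\textbf{Expected obstacle.} The delicate point is the commutation square in Step 2, and in particular which $L^1$-object one uses. The juxtaposition $\gamma_\xi$ must be normalised on $[c,d]$ consistently with the rescaling $u=2t-c$, $u=2t-d$, so that the factor $\frac12$ appearing in $\eta$ matches exactly. If the conventions differ, for instance $\gamma$ defined on $[0,1]$ as in Remark \ref{rmk:initial}, I would first transport the initial object along the affine map $[0,1]\to[c,d]$. A subsidiary point is that the commutation only needs checking on elementary simple functions: by the boundedness of $\Phi$ and $\eta$ it then extends to the completion, which is precisely how Remark \ref{rmk:initial}\ref{thm:initial adm 1} passes from $\bfS_\homo$ to $\wS$.
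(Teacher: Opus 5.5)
Your proposal is correct and follows the standard argument behind this result. The paper gives no proof of its own and imports the statement from Leinster's Proposition 2.4; your route is to check that $(C_*([c,d]),\,x\mapsto x-c,\,\eta)$ is an object of $\scrA^1$, check that $f\mapsto \int_c^{x} f\,\mathrm{d}\mu$ is an $\scrA^1$-morphism out of $(L^1([c,d]),\id_{[c,d]},\gamma_{\frac{c+d}{2}})$, and conclude by initiality. Your two corrections to the statement are the right ones (the distinguished element must be $x\mapsto x-c$, and $f$ should range over $L^1([c,d])$), and your substitution computations for the commuting square and for $\eta(\id,\id)=\id$ are correct.
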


\checks{One of the results in} \cite{LLL2025cat3} \checks{was} to redescribe the above result by using i.poset,
see \cite[Corollary 4.3]{LLL2025cat3}.


\begin{remark}\label{rmk:FTC} \rm
The core content of Theorem \ref{thm:LM2023} is the following:
under the L-condition, the integral with variable upper limit function
$F(x) := \displaystyle (\scrA^1)\int_c^x f \dd\calH$ is absolutely continuous,
and, in analysis, its (weak) derivative recovers $f$ \checks{(a.e.)}.
That is, $\Dop$ and $\w{T}_c^x$ are inverses of each other in an appropriate sense.
This inverse relationship will be extended to $V$-valued functions in Section \ref{sec:equiv}.
\end{remark}

\section{$V$-valued functions and categorical ODEs}\label{sec:V-ODE}
This section is one of the core innovations of the paper:
we introduce $V$-valued function spaces and give the rigorous definition of categorical ODEs.

\subsection{$V$-valued elementary simple functions}\label{subsec:V-simple}

\begin{definition}\label{def:V-simple} \rm
Let $V$ be a Banach $\itLamb$-module. A \defines{$V$-valued elementary simple function} defined on $\II_\itLamb$ is a finite sum
\begin{align}\label{eq:V-simple}
  f = \sum_{i=1}^{N} v_i\id_{I_i},
\end{align}
where $v_i \in V$, $I_i \subseteq \II_{\itLamb}$, and all subsets $I_i$ of $\II_{\itLamb}$ are pairwise disjoint
(as in Definition \ref{def:simple-func}).
The set of all $V$-valued elementary simple functions is denoted by $\SV$.
\end{definition}

\begin{proposition}\label{prop:V-module}
$\SV$ has a left $\itLamb$-action given by
\begin{align}\label{eq:V-module-action}
  (a\checks{\cdot_V} f)(x) := a \cdot_V f(x), \quad \forall a \in \itLamb, f \in \SV, x \in \II_\itLamb,
\end{align}
such that it is a $\itLamb$-module,
where $a \cdot_V f(x)$ is obtained by the left $\itLamb$-action defined on $V$ naturally,
since for each $x\in\II_{\itLamb}$, $f(x)$ is an element in $V$,
i.e., $a\checks{\cdot_V} f = (1_{\itLamb}\otimes a)f = (1_{\itLamb}\otimes a)(\sum_i f_i\otimes e_i)
= \sum_i f_i\otimes ae_i$ holds {\rm(}\checks{where} $\{e_1,\ldots,e_m\}$ is a basis of $V${\rm)}.
Furthermore, $\SV$ is a $\homo$-normed $\itLamb$-module with the norm
\begin{align}\label{eq:V-norm}
  \|f\|_{\SV} := \sum_{i=1}^{N} \|v_i\|_V \cdot \mu(I_i).
\end{align}
\end{proposition}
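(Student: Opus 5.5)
The plan is to verify the claims in three steps: $\SV$ is closed under the action, the module axioms hold, and the norm is well defined and satisfies the $\homo$-normed inequality.

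\textbf{Closure and module axioms.} First, $\SV$ is an abelian group under pointwise addition. Given $f=\sum_i v_i\id_{I_i}$ and $g=\sum_j w_j\id_{J_j}$, we pass to the common refinement $\{I_i\cap J_j\}$, which is again a finite family of pairwise disjoint subsets. On this refinement $f+g=\sum_{i,j}(v_i+w_j)\id_{I_i\cap J_j}$, which lies in $\SV$. Next, for $a\in\itLamb$ the definition (\ref{eq:V-module-action}) gives
\[a\cdot_V f=\sum_i (a\cdot_V v_i)\id_{I_i},\]
since the $I_i$ are disjoint, so at each $x$ exactly one term (or none) contributes. Hence $a\cdot_V f\in\SV$. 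The module axioms $(ab)f=a(bf)$, $(a+b)f=af+bf$, $a(f+g)=af+ag$ and $1f=f$ are pointwise statements in $V$. They therefore follow at once from the fact that $V$ is a $\itLamb$-module. The tensor description $\sum_i f_i\otimes ae_i$ is just a coordinate rewriting of the same pointwise action, so I will only note that it agrees with it.

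\textbf{Well-definedness of the norm.} A simple function has many representations, so I must show (\ref{eq:V-norm}) does not depend on the choice. The plan is the standard refinement argument. Any two disjoint representations of $f$ admit a common refinement. Refining a single set $I_i$ into disjoint pieces $I_i=\bigsqcup_k I_{ik}$ does not change $\sum\|v_i\|\mu(I_i)$, by finite additivity of $\mu$. On pieces where both representations are defined, the values agree, since they equal $f(x)$. Pieces on which $f$ vanishes contribute $0$ in either case. This is the step I expect to require the most care. It implicitly needs the $I_i$ to be measurable, and it uses the convention that a nonzero value on a $\mu$-null set contributes zero. I would therefore treat the norm as a seminorm, quotienting by a.e.-zero functions as is done for $\bfS_{\homo}(\II_{\itLamb})$ in \cite{LLHZ2025}, so that definiteness holds.

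\textbf{Norm axioms and the $\homo$-inequality.} Absolute homogeneity over $\kk$ and the triangle inequality follow on a common refinement from the corresponding properties of $\|\cdot\|_V$. For the $\homo$-normed condition, I take the representation $a\cdot_V f=\sum_i (a v_i)\id_{I_i}$ and use that $V$ is $\homo$-normed. This gives
\[\|a\cdot_V f\|=\sum_i\|av_i\|_V\,\mu(I_i)\=<|\homo(a)|\sum_i\|v_i\|_V\,\mu(I_i)=|\homo(a)|\,\|f\|_{\SV},\]
which completes the proof.
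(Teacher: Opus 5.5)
Your proposal is correct and follows the same route as the paper: the module axioms are checked pointwise in $V$, and the $\homo$-normed inequality comes from the termwise estimate $\|a\cdot_V v_i\|_V\le|\homo(a)|\,\|v_i\|_V$ applied to the representation $a\cdot_V f=\sum_i (a\cdot_V v_i)\id_{I_i}$. You are more careful than the paper, which leaves the remaining norm properties as ``verified directly'': you show the norm does not depend on the chosen representation, via a common-refinement argument (if the $I_i$ do not already cover $\II_{\itLamb}$, first adjoin $\II_{\itLamb}\setminus\bigcup_i I_i$ with value $0$ before intersecting), and you correctly point out that definiteness of $\|\cdot\|_{\SV}$ only holds after passing to a.e.-classes.
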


\begin{proof}
We first verify the $\itLamb$-module structure of $\SV$. Let $a, a' \in \itLamb$, $f, g \in \SV$, and $k \in \kk$,
we have the following three facts:
\begin{enumerate}[label=\text{\rm(\arabic*)}]
  \item $a \cdot_V f(x) \in V$ holds for all $x$, so $a \cdot_V f$ is still a $V$-valued function.
  \item $\big(a \cdot_V (a' \cdot_V f)\big)(x) = (aa') \cdot_V f(x) = ((aa') \cdot_V f)(x)$ (associativity).
  \item $(a \cdot_V (f + g))(x) = a \cdot_V (f(x) + g(x)) = a \cdot_V f(x) + a \cdot_V g(x)$ (distributivity).
\end{enumerate}
Hence $\SV$ is a left $\itLamb$-module.

Next, let $f = \displaystyle \sum_i v_i \cdot \id_{I_i}$.
Then $a \cdot_V f = \displaystyle \sum_i (a \cdot_V v_i) \cdot \id_{I_i}$. Thus,
\begin{align*}
 \|a \cdot_V f\|_{\SV} & = \sum_i \|a \cdot_V v_i\|_V \mu(I_i) \\
   & \mathop{\=<}\limits^{\star} \sum_i |\homo(a)|~\|v_i\|_V \mu(I_i) \\
   & = |\homo(a)| ~\|f\|_{\SV},
\end{align*}
where $\star$ \checks{holds by} Definition \ref{def:normed-module}.
The remaining properties of the norm\checks{: positive definiteness, homogeneity, and the triangle inequality,} can be verified directly.
\end{proof}

\begin{notation}[$V$-valued completion]\label{def:V-completion} \rm
The completion of $\SV$ is denoted by
\begin{center}
  $\wSV $:= the completion of $\SV$ with respect to $\|\cdot\|_{\SV}$,
\end{center}
i.e., the space of equivalence classes of Cauchy sequences in $\SV$.
Then $\wSV$ is a Banach $\itLamb$-module.
\end{notation}

When $V$ is a finite-dimensional $\kk$-linear space (with $\kk$-basis $\{e_1, \ldots, e_m\}$),
$\SV$ is naturally isomorphic to $\bfS_\homo(\II_\itLamb) \otimes_\kk V$,
and after completion $\wSV \cong \wS \otimes_\kk V$ (completed tensor product) if $\kk$ is complete.
In this fact, a $V$-valued function $f$ can be written as
\begin{align}\label{eq:tensor-decomp}
  f = \sum_{j=1}^{m} f_j \otimes e_j, \quad f_j \in \wS.
\end{align}

\begin{example}\rm
A well-known example is the function with complex variable:
each function with complex variable $f: \mathbb{C} \to \mathbb{C}$, $z\mapsto f(z)$ can be written as
\[ f(x,y) = u(x,y) + v(x,y) \mathrm{i} =  u(x,y) \otimes 1 + v(x,y)\otimes \mathrm{i}, \]
where $u(x,y)$ and $v(x,y)$ are function of the form $\RR^2 \to \RR$.
In this example, $\itLamb=\CC =\RR\times \RR\mathrm{i}$ is an $\RR$-algebra,
$V=\CC\cong \RR^2$ is an $\RR$-module,
$\bfS_{\mathrm{id}}(\II_{\itLamb})$ is the set of all
elementary simple functions defined on $\II_{\itLamb} = [c,d]^{\times 2}$,
and we have
\[
\bfS_{\mathrm{id}}(\II_{\itLamb})\otimes_{\RR}\CC
= (\bfS_{\mathrm{id}}(\II_{\itLamb}) \otimes_{\RR} \RR)
\oplus (\bfS_{\mathrm{id}}(\II_{\itLamb}) \otimes_{\RR} \RR\mathrm{i})
\mathop{\cong}\limits^{\sigma} \bfS_{\mathrm{id}}(\II_{\itLamb}, \CC^2)
\]
Here, each element in $\bfS_{\mathrm{id}}(\II_{\itLamb})$ is a map of the form $\RR^2\to \RR$,
and each element in $\bfS_{\mathrm{id}}(\II_{\itLamb},\CC)$ is a map of the form $\RR^2\to \CC$.
The isomorphism $\sigma$ provides a correspondence
\[ u(x,y)+v(x,y)\mathrm{i} \mapsto f(z) \]
from $\w{\bfS_{\mathrm{id}}(\II_{\itLamb})}\otimes_{\RR}\CC$ to $\w{\bfS_{\mathrm{id}}(\II_{\itLamb},\CC)}$,
where $\Re(f(z))=u(x,y)$ and $\Im(f(z))=v(x,y)$.
\end{example}

\subsection{$V$-valued function spaces as objects of $\scrA^p$}\label{subsec:V-object}

In order to treat $V$-valued functions within \checks{our categorical} framework, we need to show that $V$-valued function spaces \checks{are objects} of $\scrA^p$.

Let $\gamma_\xi: \bfS_\homo(\II_\itLamb)^{\oplus_p 2^n} \to \bfS_\homo(\II_\itLamb)$ be the juxtaposition map given in the $\scrA^p$-initial object in $\scrN^p$.
\checks{Notice that since} each linear space $V$, as a $\kk$-module, is flat,
then for any homomorphism $h: X\to Y$ of $\itLamb$-modules $X$ and $Y$,
$h\otimes_{\kk} V$ has a canonical composition
\[ \xymatrix{
 X\otimes_{\kk} V \ar[rr]^{h\otimes_{\kk} V} \ar[rd]_{h_1} & & Y\otimes_{\kk} V \\
 & \im(h\otimes_{\kk}V) \ar[ru]_{h_2} &} \]
such that
$\im(h\otimes_{\kk}V) \cong \im(h)\otimes_{\kk}V$,
and $h_1$ and $h_2$ are introduced by the tensor functor $-\otimes_{\kk}V$ applying to $\hbar:X\to \im(h)$
and $\emb: \im(h) \mathop{\to}\limits^{\subseteq} Y$.
Here, $\emb\compos \hbar$ is the canonical
 composition of $h$.

\begin{definition}\rm
We define the \defines{$V$-valued juxtaposition map} as the map $\gamma_{\xi,V}:= \gamma_{\xi}\otimes_{\kk}V$.
That is, $\gamma_{\xi,V}$ is \checks{the} homomorphism of $\itLamb$-modules \checks{defined} as follows:
\begin{align*}
  \gamma_{\xi,V}: \SV^{\oplus_p 2^n} \to \SV, \quad
  (f_1, \ldots, f_{2^n}) \mapsto \gamma_{\xi,V}(f_1, \ldots, f_{2^n}),
\end{align*}
where the right-hand side applies $\gamma_\xi$ componentwise to the $\kk$-valued components.
More precisely, in the tensor product representation (\ref{eq:tensor-decomp}),
writing $f_l = \displaystyle \sum\nolimits_j f_{l,j} \otimes e_j$, we have
\begin{align*}
    (f_1, \ldots, f_{2^n})
& = \bigoplus_{l=1}^{2^n} f_l := \sum_{l=1}^{2^n} (0,\ldots,0, f_l,0,\ldots,0) \\
& = \bigoplus_{l=1}^{2^n} \sum_{j=1}^m f_{l,j} \otimes e_j \\
& = \sum_{j=1}^m \bigoplus_{l=1}^{2^n} f_{l,j} \otimes e_j \\
& = \sum_{j=1}^m \bigg(\bigoplus_{l=1}^{2^n} f_{l,j}\bigg) \otimes e_j \\
& = \sum_{j=1}^m (f_{1,j},\ldots,f_{2^n,j}) \otimes e_j,
\end{align*}
and then
\begin{align} \label{eq:V-valued}
  \gamma_{\xi,V}(f_1, \ldots, f_{2^n}) := \sum_{j=1}^m \gamma_\xi(f_{1,j}, \ldots, f_{2^n,j}) \otimes e_j.
\end{align}
\end{definition}

\begin{proposition}\label{prop:V-object}
Let $V$ be a $\homo$-normed $\itLamb$-module containing an element $v_0 \in V$ with $\|v_0\|_V \=< 1$.
Define $\id_{\II_{\itLamb},V} := \id_{\II_\itLamb} \otimes v_0$
{\rm(}i.e., $x \mapsto v_0$ for all $x \in \II_\itLamb${\rm)}.
Then $\id_{\II_{\itLamb},V} \in \SV$, and the triple $(\wSV,\id_{\II_\itLamb,V},\w{\gamma}_{\xi,V})$ is an object in $\scrA^p$.
\end{proposition}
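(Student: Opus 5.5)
We verify the three defining conditions of an object of $\scrA^p$ (Definition \ref{def:Ap}) for the triple $(\wSV,\id_{\II_\itLamb,V},\w{\gamma}_{\xi,V})$. First, membership and completeness. The function $\id_{\II_\itLamb,V}=v_0\id_{\II_\itLamb}$ is a one-term sum of the form (\ref{eq:V-simple}) with $I_1=\II_\itLamb$, so it lies in $\SV\subseteq\wSV$. By Proposition \ref{prop:V-module}, $\SV$ is a $\homo$-normed $\itLamb$-module. We then check that the completion $\wSV$ inherits this structure. The action $a\cdot_V-$ is bounded with operator norm at most $|\homo(a)|$, so it sends Cauchy sequences to Cauchy sequences and extends uniquely to $\wSV$. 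The inequality $\|a\cdot_V f\|\=<|\homo(a)|\,\|f\|$ then passes to limits. Hence $\wSV$ is a Banach $\itLamb$-module, which is condition (i).

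Second, the norm condition (ii). By (\ref{eq:V-norm}) we have $\|\id_{\II_\itLamb,V}\|=\|v_0\|_V\,\mu(\II_\itLamb)\=<\mu(\II_\itLamb)$, using the hypothesis $\|v_0\|_V\=<1$.

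Third, the map $\w{\gamma}_{\xi,V}$, which is condition (iii). We would carry out three checks.
\begin{enumerate}[label=\text{\rm(\alph*)}]
  \item \emph{$\gamma_{\xi,V}$ is an $\itLamb$-homomorphism.} By (\ref{eq:V-valued}), $\gamma_{\xi,V}=\gamma_\xi\otimes_\kk V$ acts on the function variables while $\itLamb$ acts pointwise on values in $V$. Concretely, $\gamma_\xi$ reparametrises the domain, placing each $f_l$ on the $l$-th subcell of $\II_\itLamb$ cut out by $\xi$ with an affine change of variable. Such a reparametrisation commutes with any pointwise operation on values, in particular with $v\mapsto a\cdot_V v$. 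This gives $\itLamb$-linearity without using the basis $\{e_j\}$. The same observation shows that the formula is basis-independent and also makes sense for infinite-dimensional $V$: one simply defines $\gamma_{\xi,V}(f_1,\dots,f_{2^n})$ as the juxtaposed $V$-valued function.
  \item \emph{$\gamma_{\xi,V}(v,\dots,v)=v$ for $v=\id_{\II_\itLamb,V}$.} Juxtaposing $2^n$ copies of the constant function $v_0$ gives the constant function $v_0$ again, exactly as $\gamma_\xi(\id,\dots,\id)=\id$ in Theorem \ref{thm:initial}.
  \item \emph{Boundedness.} On simple functions, $\gamma_{\xi,V}$ rescales each piece $v_i\id_{I_i}$ of $f_l$ to a piece whose measure is multiplied by the volume ratio of the $l$-th subcell. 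Therefore $\|\gamma_{\xi,V}(f_1,\dots,f_{2^n})\|\=<C\sum_l\|f_l\|$, which is bounded with respect to the $\oplus_p$ norm, since all norms on a finite sum are equivalent. By this boundedness, $\gamma_{\xi,V}$ extends uniquely to a continuous $\itLamb$-homomorphism $\w{\gamma}_{\xi,V}$ on $\wSV^{\oplus_p2^n}$. Continuity is exactly the statement that it commutes with limits of Cauchy sequences.
\end{enumerate}

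The main obstacle is step (c), the boundedness estimate for $\gamma_{\xi,V}$ and its compatibility with the norm (\ref{eq:V-norm}). Defining $\gamma_{\xi,V}$ componentwise via (\ref{eq:V-valued}) presumes a finite $\kk$-basis and a completed tensor product, and the norm on $\wSV$ need not coincide with a tensor norm. I would therefore avoid the basis altogether. The plan is to prove the estimate directly on simple functions, using only the measure-scaling property of $\gamma_\xi$ from \cite{LLHZ2025}, and to remark that this agrees with (\ref{eq:V-valued}) whenever $V$ is finite-dimensional.
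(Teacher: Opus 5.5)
Your proof is correct, but it takes a different route from the paper's.

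\textbf{The paper's route.} It works entirely through the identification $\wSV\cong\wS\otimes_\kk V$ given by a finite $\kk$-basis of $V$. It equips $\wS\otimes_\kk V$ with the projective tensor norm $\inf\{\sum_t\|f_t\|\,\|v_t\|_V\}$. It obtains $\|a\cdot_V F\|\le|\homo(a)|\,\|F\|$ by pushing $\|a\cdot_V v_t\|_V\le|\homo(a)|\,\|v_t\|_V$ through every tensor representation and taking the infimum, and it gets completeness from $\dim_\kk V<\infty$. It checks $\gamma_{\xi,V}(\id_{\II_\itLamb}\otimes v_0,\ldots)=\gamma_\xi(\id_{\II_\itLamb},\ldots)\otimes v_0$ via (\ref{eq:V-valued}). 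Finally, it simply asserts that continuity of $\w{\gamma}_{\xi,V}$ is inherited componentwise from $\w{\gamma}_\xi$.

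\textbf{Your route.} You stay with the Bochner-type norm (\ref{eq:V-norm}) and get completeness and the $\homo$-normed inequality by extending from $\SV$ to its completion. You treat $\gamma_{\xi,V}$ as pointwise juxtaposition of $V$-valued functions and prove its $\itLamb$-linearity and boundedness directly.

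\textbf{What each buys.} Your argument is basis-free and covers infinite-dimensional $V$, which the statement's hypotheses allow but the paper's proof tacitly excludes. It also supplies two things the paper leaves implicit: that $\gamma_{\xi,V}$ is $\itLamb$-linear, and an actual bound for it. The paper's argument, in exchange, feeds directly into the componentwise formalism (\ref{eq:tensor-decomp}) used later to define $\DV$ and $\w{T}_{x_0,V}^{x}$.

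\textbf{Two minor comments.} First, your worry that the norm on $\wSV$ might differ from a tensor norm is unfounded. For pairwise disjoint $I_i$ and $F=\sum_i v_i\id_{I_i}$, every representation $F=\sum_t f_t\otimes v_t$ satisfies $\sum_i\mu(I_i)\|v_i\|_V=\int\|F(x)\|_V\,\dd\mu\le\sum_t\|f_t\|_1\|v_t\|_V$. So the representation $\sum_i\id_{I_i}\otimes v_i$ attains the projective infimum, and for finite-dimensional $V$ both constructions give the same Banach module and the same $\w{\gamma}_{\xi,V}$.

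Second, in step (c) you can avoid the measure-scaling property altogether. Juxtaposition commutes with taking pointwise norms, so $\|\gamma_{\xi,V}(f_1,\ldots,f_{2^n})\|_{\SV}=\|\gamma_\xi(\|f_1(\cdot)\|_V,\ldots,\|f_{2^n}(\cdot)\|_V)\|_1$. The bound then follows from continuity of the scalar $\gamma_\xi$, with the same constant.
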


\begin{proof}
It is clear that $\id_{\II_{\itLamb},V} \in \SV$. Now we prove that
$(\wSV,\id_{\II_\itLamb,V},\w{\gamma}_{\xi,V})$ is an object in $\scrA^p$.
We need to verify the conditions in Definition \ref{def:Ap}.

First, since $\wS$ and $V$ are $\itLamb$-modules, then so is $\wS\otimes_{\kk}V$.
Here, $\wS\otimes_{\kk}V$ has two left $\itLamb$-actions
\begin{center}
  $a(f\otimes v) = af\otimes v$ and
  $a\cdot_V(f\otimes v) := (1_{\itLamb}\otimes a)(f\otimes v), $
\end{center}
or equivalently, $\wS$ is a $(\itLamb, \itLamb^{\mathrm{op}})$-bimodule
whose left $\itLamb$-action and right $\itLamb^{\mathrm{op}}$-action are given by the above first and second equations, respectively.
Here, the second action is given by Proposition \ref{prop:V-module}.
In the sense of the second, fixing a basis of $V$ that is $\{e_i,\ldots,e_m\}$, we have
$v = \displaystyle \bigoplus_{i=1}^m k_ie_i:=\sum_{i=1}^m k_ie_i$
(\checks{where} $k_1,\ldots, k_m\in \kk$, this sum is a formal sum ``$\bigoplus$''), and then
\[ a\cdot_V(f\otimes v) = (1_{\itLamb}\otimes a)\bigg(\sum_{i=1}^m f_i\otimes e_i\bigg)\otimes \bigoplus_{i=1}^m k_ie_i. \]
Notice that $k_ie_i = k_i(1\otimes e_i) = k_i\otimes e_i$ holds for all $k_i\in\kk$
since the $\kk$-linear space $V$ can be viewed as the $\kk$-tensor
\[ V = \bigoplus_{i=1}^m \kk e_i \cong \kk \otimes_{\kk} V
= \kk \otimes_{\kk} \bigg(\bigoplus_{i=1}^m \kk e_i\bigg)
= \bigoplus_{i=1}^m \kk \otimes_{\kk} \kk e_i
= \bigoplus_{i=1}^m \kk (1\otimes_{\kk}e_i) \]
where for any $i$ the multiplication of $k_i\otimes e_i$ and $k_i'\otimes e_i$ (for any $k_i,k_i'\in\kk$)
is $k_ik_i'\otimes e_i$ induced by the scalar multiplication of $\kk$-linear space, we obtain
\[ a\cdot_V(f\otimes v) = \sum_{i=1}^m k_if_i\otimes ae_i
= \sum_{i=1}^m \bigg(k_if_i\otimes \bigoplus_{t=1}^m k_{it}e_t\bigg)
= \bigoplus_{t=1}^m  \bigg(\sum_{i=1}^m k_{it}k_if_i\bigg)\otimes e_t. \]
Here, $ae_i = \displaystyle\bigoplus_{t=1}^m k_{it}e_t$ for some $k_{it} \in \kk$ ($1\=<i,t\=< m$).
The above equation means that $a\cdot(f\otimes v)$ (or more generally, $f\otimes v$)
can be expressed as a sum of tensors in different ways.
Then, for the well-definedness of the norm $\|\cdot\|_{\wSV}$, we define
\[ \|\cdot\|_{\wSV}: \wSV \cong \wS\otimes_{\kk}V \to \RR^{\>=0}, \]
\[f\otimes v \mapsto
  \inf\left\{
    \left.
    \sum_t\|f_t\|_{\wS}\|v_t\|_V
    \right|
    f\otimes v = \sum_t f_t\otimes v_t
  \right\},\]
then $\|\cdot\|_{\wSV}$ is a norm defined on the tensor $\wSV$,
this norm is said to be a \defines{projected tensor product norm} in analysis.
On the other hand, for any $f\otimes v = \displaystyle \sum_t f_t\otimes v_t$, we have
\[ a\cdot_V (f\otimes v) = \sum_t f_t\otimes av_t. \]
Since \[\displaystyle \sum_t \|f_t\|_{\wS} \| av_t\|_V
\mathop{\=<}\limits^{\star} \sum_t \|f_t\|_{\wS} \homo(a) \|v_t\|_V
 = \homo(a) \sum_t \|f_t\|_{\wS}\|v_t\|_V, \]
we obtain
\[ \|a\cdot_V (f\otimes v)\|_{\wSV} \=< \homo(a) \sum_t \|f_t\|_{\wS}\|v_t\|_V \]
for all sums $\displaystyle \sum_t \|f_t\|_{\wS}\|v_t\|_V$ with $f\otimes v= \displaystyle \sum_t f_t\otimes v_t$,
where $\star$ holds by using Definition \ref{def:normed-module}.
It follows that
\[ \|a\cdot_V (f\otimes v)\|_{\wSV} \=< \homo(a) \|f\otimes v\|_{\wSV}, \]
i.e., $\wS\otimes_{\kk}V$ is a $\homo$-normed module.
Notice that $\kk$ is complete and $V$ is finite-dimensional,
then $\wS\otimes_{\kk}V$ is complete, and so, $\wSV \cong \wS\otimes_{\kk}V$ is complete.
Thus, $\wSV \cong \wS\otimes_{\kk}V$ is a Banach module.

Second, we have $\id_{\II_\itLamb,V} \in \wSV$, and
$\|\id_{\II_\itLamb,V}\|_{\wSV} = \mu(\II_\itLamb)\cdot \|v_0\|_V \=< \mu(\II_\itLamb)$.

Third, since $(\wS, \id, \w{\gamma}_{\xi})$ is an object in $\scrA^p$ (see Theorem \ref{thm:initial},
or see Remark \ref{rmk:initial} \ref{thm:initial adm 1}),
we have $\gamma_\xi(\id_{\II_\itLamb}, \ldots, \id_{\II_\itLamb}) = \id_{\II_\itLamb}$.
Thus, we obtain the following equation
\begin{align*}
    \w{\gamma}_{\xi,V}(\id_{\II_\itLamb,V}, \ldots, \id_{\II_\itLamb,V})
& = \gamma_{\xi,V}(\id_{\II_\itLamb} \otimes v_0, \ldots, \id_{\II_\itLamb} \otimes v_0) \\
& = \gamma_{\xi}(\id_{\II_\itLamb}, \ldots, \id_{\II_\itLamb})  \otimes v_0 \\
& = \id_{\II_\itLamb} \otimes v_0 \\
& = \id_{\II_\itLamb,V}
\end{align*}
by using (\ref{eq:V-valued}).

Finally, the commutativity of $\w{\gamma}_{\xi, V}$ with inverse limits of Cauchy sequences is inherited from that of $\w{\gamma}_{\xi}$ with inverse limits of Cauchy sequences.
Therefore, $(\wSV,\id_{\II_\itLamb,V},$ $\w{\gamma}_{\xi,V})$ is an object in $\scrA^p$.
\end{proof}

\subsection{$V$-valued differential operator and integral}\label{subsec:DV-TV}

Let $V$ be a Banach $\itLamb$-module with $\kk$-basis $\{e_1, \ldots, e_m\}$.
Every function $f \in \wSV$ can be written as
\begin{align}\label{eq:function}
  f = \displaystyle \sum_{j=1}^{m} f_j \otimes e_j \in \wSV
\end{align}
since $\wSV \cong \wS \otimes_{\kk} V$, where each $f_j$ is a function lying in $\wS$.
The conclusion stated in Theorem~\ref{thm:D} can be generalized to the case where $\itLamb$ is an algebra.
In this case, the totally ordered subset $S:=[c,d]_{\itLamb}\subseteq \itLamb$
can be taken as a one-dimensional smooth manifold in $\itLamb$,
i.e., $[c,d]_{\itLamb}$ is a compact differentiable manifold given by a map $S:[0,1]\to [c,d]_{\itLamb}$.
Then we can define the differential $D$ for absolutely continuous functions $f$ defined on $[c,d]_{\itLamb}$.
Next, we define $V$-valued homomorphic operator.

\begin{definition}\label{def:DV} \rm
\checks{The \defines{$V$-valued homomorphic operator} $\Dop_V$ is defined on absolutely continuous functions $f$ given in (\ref{eq:function}), and is defined as follows:}
\begin{align}\label{eq:DV}
  \Dop_V f := \sum_{j=1}^{m} (h(f_j)) \otimes e_j,
\end{align}
where $h$ is a homomorphism lying in (\ref{eq:D}).
In particular, if $\scrA^p$ satisfies generalized L-condition and
$h = \Dop$ is the differential operator from Theorem \ref{thm:D}
(we suppose $[c,d]_{\itLamb}$ is a compact differentiable manifold in this paper),
then (\ref{eq:DV}) is said to be a \defines{$V$-valued differential operator}.
\end{definition}

Next, we always assume that the operator $\Dop_V$ is a $V$-valued differential operator.
Clearly, we have
\[ \sum_{j=1}^{m} (\Dop f_j) \otimes e_j \in \wS\otimes_{\kk} V
\mathop{\cong}\limits^{\text{Prop. \ref{prop:V-object}}} \wSV \]
since all functions $f_j$ lie in $\wS$.
The following proposition shows that the definition of \checks{the} $V$-valued differential operator
given in Definition \ref{def:DV} is well-defined, and $\DV$ is a homomorphism of $\itLamb$-modules lying in $\End_{\itLamb}(\wSV)$.

\begin{proposition}\label{prop:DV-welldefined}
\hfill
\begin{enumerate}[label=\textrm{\rm(\arabic*)}]
  \item \label{prop:DV-welldefined 1}
    The definition of $\DV$ is independent of the choice of $\kk$-basis of $V$;
  \item \label{prop:DV-welldefined 2}
    $\DV$ is $\kk$-linear, i.e., $\forall k_1, k_2 \in \kk$, we have $\DV(k_1 f + k_2 g) = k_1 \DV f + k_2 \DV g$;
  \item \label{prop:DV-welldefined 3}
    \checks{$\DV$} is a homomorphism of $\itLamb$-modules.
\end{enumerate}
\end{proposition}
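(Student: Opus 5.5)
The cleanest route is to recognize $\DV$ as the map $\Dop\otimes_{\kk}\ident_V$ on $\wS\otimes_{\kk}V\cong\wSV$ (Proposition \ref{prop:V-object}), restricted to absolutely continuous functions. All three claims then reduce to bilinearity of the tensor product and $\kk$-linearity of $\Dop$. Recall that $\Dop$ is a $\kk$-homomorphism by Theorem \ref{thm:D}, and that the generalized version lies in (\ref{eq:D}).

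For \ref{prop:DV-welldefined 1}, let $\{e'_1,\ldots,e'_m\}$ be another $\kk$-basis, with $e_j=\sum_t c_{jt}e'_t$ and $(c_{jt})$ invertible. If $f=\sum_j f_j\otimes e_j$, then bilinearity gives $f=\sum_t\big(\sum_j c_{jt}f_j\big)\otimes e'_t$, so the new coefficients are $f'_t=\sum_j c_{jt}f_j$. Two points must be checked here. First, these coefficients are uniquely determined, because $\{e'_t\}$ is a basis and $\wS\otimes_\kk V\cong\wS^{\oplus m}$ via the basis. Second, each $f'_t$ is again absolutely continuous, being a finite $\kk$-linear combination of absolutely continuous functions; this is what lets the two-basis computation run on the domain of $\DV$. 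Then
\[
\sum_t \Dop(f'_t)\otimes e'_t=\sum_t\sum_j c_{jt}\Dop(f_j)\otimes e'_t=\sum_j \Dop(f_j)\otimes\Big(\sum_t c_{jt}e'_t\Big)=\sum_j\Dop(f_j)\otimes e_j ,
\]
which is the required independence.

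For \ref{prop:DV-welldefined 2}, write $f=\sum_j f_j\otimes e_j$ and $g=\sum_j g_j\otimes e_j$ in the same basis. Then $k_1f+k_2g$ has coefficients $k_1f_j+k_2g_j$, again absolutely continuous. Linearity of $\Dop$ gives $\Dop(k_1f_j+k_2g_j)=k_1\Dop f_j+k_2\Dop g_j$, and distributing over $\otimes e_j$ yields $\DV(k_1f+k_2g)=k_1\DV f+k_2\DV g$.

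Part \ref{prop:DV-welldefined 3} is the main obstacle, because $\wSV$ carries two candidate $\itLamb$-actions (as noted in the proof of Proposition \ref{prop:V-object}), and both must be handled.

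For the action $a\cdot_V$ of Proposition \ref{prop:V-module}, write $ae_i=\sum_t k_{it}e_t$ as in the proof of Proposition \ref{prop:V-object}. Then
\[
a\cdot_V f=\sum_t\Big(\sum_i k_{it}f_i\Big)\otimes e_t .
\]
Applying \ref{prop:DV-welldefined 2} and then regrouping the same way gives
\[
\DV(a\cdot_V f)=\sum_t\Big(\sum_i k_{it}\Dop f_i\Big)\otimes e_t=\sum_i\Dop f_i\otimes ae_i=a\cdot_V\DV f .
\]
Thus $\DV$ commutes with the $V$-side action essentially because it acts only on the function factor; in tensor language, $(\Dop\otimes\ident_V)(\ident\otimes a)=(\ident\otimes a)(\Dop\otimes\ident_V)$.

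For the function-side action $a.f=\homo(a)f$ of (\ref{eq:module-action-S}), the scalar $\homo(a)$ lies in $\kk$, so
\[
\DV(a.f)=\homo(a)\DV f=a.\DV f
\]
follows directly from \ref{prop:DV-welldefined 2}.

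Finally, since $\DV$ is defined only on the absolutely continuous functions, one must note that this subset is a $\itLamb$-submodule of $\wSV$ under both actions. Both actions act on coefficients by $\kk$-linear combinations, which preserve absolute continuity, so $\DV$ is a homomorphism of $\itLamb$-modules there.
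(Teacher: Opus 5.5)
Your proposal is correct and follows the same route as the paper: the change-of-basis computation for (1), $\kk$-linearity of $\Dop$ together with bilinearity of $\otimes$ for (2), and moving $\DV$ past the action $a\cdot_V f=\sum_j f_j\otimes ae_j$ for (3). Your expansion $ae_i=\sum_t k_{it}e_t$ is what makes the paper's rather loose regrouping in (3) rigorous, since that step applies $\DV$ termwise to $\sum_j f_j\otimes ae_j$ even though $\{ae_j\}$ need not be a basis; your checks of the function-side action $a.f=\homo(a)f$ and of the domain of absolutely continuous functions cover points the paper leaves implicit.
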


\begin{proof}
(1) Let $\mathfrak{B}:=\{e_1, \ldots, e_m\}$ and $\mathfrak{B}':=\{e_1', \ldots, e_m'\}$ be two $\kk$-\checks{bases} of $V$,
then we have \[e_j = \sum_l p_{jl} e_l'\] for any $e_j\in \mathfrak{B}$\checks{, and some $p_{jl}\in\kk$}.
Thus, we have
\[ f = \sum_j f_j \otimes e_j = \sum_l \Big(\sum_j p_{jl} f_j\Big) \otimes e_l'. \]
In the basis $\mathfrak{B}'$,
\begin{align*}
 \DV f = \sum_l \Dop\Big(\sum_j p_{jl} f_j\Big) \otimes e_l'
 = \sum_l \Big(\sum_j p_{jl} \Dop f_j\Big) \otimes e_l'
 = \sum_j (\Dop f_j) \otimes e_j,
\end{align*}
which coincides with the definition in the basis $\mathfrak{B}$.

(2) This follows directly from the $\kk$-linearity of $\Dop$ (Theorem \ref{thm:D} \ref{thm:D 2}) and the bilinearity of the tensor product.

(3) We need prove that $\DV(a \checks{\cdot_V} f) = a \checks{\cdot_V} \DV f$ holds for all $a \in \itLamb$.
Take $a \in \itLamb$ and $f = \displaystyle \sum_j f_j \otimes e_j \in \wSV$, we have
\[a \checks{\cdot_V} f = \sum_j f_j \otimes ae_j \]
by the fact given in Proposition \ref{prop:V-object} that $\wSV$ is a left $\itLamb$-module,
and then we obtain
\[ a \checks{\cdot_V} f = \sum_l \Big(\sum_j f_j\Big) \otimes ae_l.\]
Therefore,
\begin{align*}
  \DV(a \cdot_V f)
  &= \sum_l \Dop\Big(\sum_j f_j\Big) \otimes ae_l \\
  &= \sum_l \sum_j \Dop(f_j) \otimes ae_l \\
  &= a\cdot_V \Big(\sum_l\sum_j (\Dop f_j) \otimes e_l\Big) \\
  &= a\cdot_V \DV f.
\end{align*}
\end{proof}

\begin{definition}\label{def:TV} \rm
For a function $f = \displaystyle \sum_{j=1}^{m} f_j \otimes e_j \in \wSV$,
the \defines{$V$-valued integral with variable upper limit} of $f$ is defined as
\begin{align}\label{eq:TV}
  \w{T}_{x_0,V}^{x}(f) := \sum_{j=1}^{m} \w{T}_{x_0}^{x}(f_j) \otimes e_j
  = (\w{T}_{x_0}^{x}(f_1), \ldots, \w{T}_{x_0}^{x}(f_m))
  =: \bigoplus_{j=1}^m \w{T}_{x_0}^{x}(f_j) e_j  \in V,
\end{align}
where $[x_0,x]_{\itLamb}$ is a continuous totally ordered subset of $\itLamb$,
and $\w{T}_{x_0}^x$ is defined from (\ref{def:var-int}).
\end{definition}

Obviously, it is trivial that $\w{T}_{x_0,V}^{x_0}(f) = 0$ holds for all $f \in \wSV$.

\begin{proposition}\label{prop:TV-properties}
\hfill
\begin{enumerate}[label=\text{\rm(\arabic*)}]
  \item $\|\w{T}_{x_0,V}^{x}(f)\|_V \=< \displaystyle \sum_j \w{T}_{x_0}^{\alpha}(\|f_j\|_V)$
   ~{\rm(}where, for each $\varphi\in\wSV$, $\|\varphi\|_V$ is the function defined by $x\mapsto \|\varphi(x)\|_V$ for all $x${\rm)}; \label{prop:TV-properties 1}
  \item $\w{T}_{x_0,V}^{x}:\wSV \to V$ is a $\kk$-linear map; \label{prop:TV-properties 2}
  \item furthermore, $\w{T}_{x_0,V}^{x}$ is a homomorphism of $\itLamb$-modules. \label{prop:TV-properties 3}
\end{enumerate}
\end{proposition}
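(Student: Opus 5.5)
The three parts reduce, through the decomposition (\ref{eq:TV}), to properties of the scalar operator $\w{T}_{x_0}^{x}=\w{T}_{(\kk,\mu([x_0,x]_{\itLamb}),m)}$. The only inputs needed are that this operator is a morphism in $\widetilde{\scrA^1}$, hence $\kk$-linear, and that it is monotone and dominated by the integral of the absolute value. Both facts hold under the generalized L-condition, where $\w{T}_{x_0}^x$ is the integral with variable upper limit (\ref{def:var-int}). I would prove the statements in the order (2), (1), (3), since (3) uses the linearity from (2).

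For \ref{prop:TV-properties 2}, take $f=\sum_j f_j\otimes e_j$, $g=\sum_j g_j\otimes e_j$ and $k_1,k_2\in\kk$. The components of $k_1f+k_2g$ are $k_1f_j+k_2g_j$, by bilinearity of $\otimes_\kk$ and uniqueness of coordinates in the fixed basis. Linearity of each $\w{T}_{x_0}^{x}$ then gives
\[\w{T}_{x_0,V}^{x}(k_1f+k_2g)=k_1\w{T}_{x_0,V}^{x}(f)+k_2\w{T}_{x_0,V}^{x}(g).\]
The value does not depend on the chosen basis, by the same change-of-basis computation as in Proposition \ref{prop:DV-welldefined}\ref{prop:DV-welldefined 1}.

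For \ref{prop:TV-properties 1}, reading $\alpha$ as the upper limit $x$, I would argue as follows.
\begin{itemize}
\item By the triangle inequality in $V$,
\[\|\w{T}_{x_0,V}^{x}(f)\|_V\le\sum_j|\w{T}_{x_0}^x(f_j)|\,\|e_j\|_V.\]
\item Normalise the basis so that $\|e_j\|_V\le 1$. Since $V$ is finite-dimensional, all norms on it are equivalent, so this only rescales the $f_j$.
\item Use the scalar estimate $|\w{T}_{x_0}^x(\varphi)|\le\w{T}_{x_0}^x(|\varphi|)$. This is a property of the integral: for simple functions it follows from $\|\cdot\|_1$ together with positivity and the weighted-average map $m$, and it passes to the completion by continuity.
\item Finally, bound $|f_j(y)|$ pointwise by the norm $\|f_j\|_V$, meaning the coordinate function composed with the norm as the statement intends, and use monotonicity of $\w{T}_{x_0}^x$.
\end{itemize}
This step is the main obstacle. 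The statement's notation is loose, and one must justify both the monotonicity of $\w{T}_{x_0}^x$ on $\wS$ and the comparison between coordinate absolute values and $\|\cdot\|_V$. I would prove these first for elementary simple functions and then extend them to $\wSV$ by density and continuity of $\w{T}$, since it commutes with limits of Cauchy sequences.

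For \ref{prop:TV-properties 3}, take $a\in\itLamb$ and write $ae_i=\sum_t k_{it}e_t$ as in the proof of Proposition \ref{prop:V-object}. Then $a\cdot_V f=\sum_t\big(\sum_i k_{it}f_i\big)\otimes e_t$. Applying (2) coordinatewise gives
\[\w{T}_{x_0,V}^x(a\cdot_V f)=\sum_t\Big(\sum_i k_{it}\w{T}_{x_0}^x(f_i)\Big)e_t=\sum_i\w{T}_{x_0}^x(f_i)\,ae_i=a\cdot\w{T}_{x_0,V}^x(f).\]
This is exactly $\itLamb$-linearity, so (3) follows from $\kk$-linearity together with the fact that the $\itLamb$-action acts only on the $V$-factor.
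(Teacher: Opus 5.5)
Your proposal is correct and follows the paper's proof. Part (1) uses the triangle inequality in $V$ together with $|\w{T}_{x_0}^{x}(\varphi)|\le\w{T}_{x_0}^{x}(|\varphi|)$, part (2) uses $\kk$-linearity of the scalar operator $\w{T}_{x_0}^{x}$ applied coordinatewise, and part (3) is the coordinate computation the paper points to via Proposition~\ref{prop:DV-welldefined}\ref{prop:DV-welldefined 3}. Normalising the basis (best taken as $\|e_j\|_V=1$, so that $|f_j|$ and $\|f_j\otimes e_j\|_V$ agree) makes explicit what the paper's step $\|\bigoplus_j\w{T}_{x_0}^{x}(f_j)e_j\|_V\le\sum_j\|\w{T}_{x_0}^{x}(f_j)\|_V$ uses silently, and your structure-constant computation for (3) is cleaner than the one the paper refers back to.
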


\begin{proof}
(1) By the triangle inequality
\begin{align*}
  \| \w{T}_{x_0,V}^{x}(f) \|_V
 = \left\| \bigoplus_j \w{T}_{x_0}^{x}(f_j) e_j \right\|_V
 \=< \sum_j \|\w{T}_{x_0}^{x}(f_j)\|_V
 \=< \sum_j \w{T}_{x_0}^{x}(\|f_j\|_V),
\end{align*}
we have
\[ \| \w{T}_{x_0,V}^{\alpha}(f) \|_V \=< \sum_j \w{T}_{x_0}^{\alpha}(\|f_j\|_V).\]
\checks{Holds} for all $\alpha\in [x_0,x]_{\itLamb}$.

(2) holds since $\w{T}_{x_0}^x$ is $\kk$-linear (this proof is written in \cite[Corollary 4.3]{LLL2025cat3}).

(3) The proof is similar to that of Proposition \ref{prop:DV-welldefined} \ref{prop:DV-welldefined 3}.
\end{proof}

\subsection{Ordinary differential equations}\label{subsec:ODE-def}

We now state the central definition of this paper.

\begin{definition}\label{def:ODE} \rm
Let $V$ be a Banach $\itLamb$-module (\checks{with basis} $\{e_1, \ldots, e_m\}$),
$G: \itLamb \times V \to V$ be a continuous map, $f_0$ be an element in $V$,
and $x_0$ lie in a given continuous totally ordered subset $[c,d]_{\itLamb}$ of $\II_{\itLamb}$.
The \defines{{\rm(}$V$-valued{\rm)} initial value problem} is defined as
\begin{align}\label{eq:ODE-general}
  \DV f(x) = G(x, f(x)), \quad f(x_0) = f_0,
\end{align}
where $f \in \wSV$. Furthermore, if there is a map $A: [c,d]_{\itLamb} \to \itLamb$ such that
$G(x, f(x)) = -A(x)\cdot_V f(x) + g(x)$ holds for all $x\in[c,d]_{\itLamb}$, then (\ref{eq:ODE-general}) becomes
\begin{align}\label{eq:ODE-linear}
  \DV f(x) +  A(x) \cdot_V f(x) = g(x), \quad f(x_0) = f_0,
\end{align}
which is said to be a \defines{linear ordinary differential equations} (=linear ODE for short).
Here,
\begin{enumerate}[label=\textrm{\checks{(\arabic*)}}]
  \item $A(x)$ is called the \defines{coefficient function} of (\ref{eq:ODE-linear});
  \item $g \in \wSV$ is called the \defines{non-homogeneous term} of (\ref{eq:ODE-linear});
  \item for each $t\in \II_{\itLamb}$, $A(t) \cdot_V f(t)=(1_{\itLamb}\otimes A(t))f(t)$ is induced by
    the left $\itLamb$-action defined on $V$ ($1_{\itLamb}$ is the identity in $\itLamb$),
  \item and, when write $f$ as $f = \displaystyle\sum_{i=1}^m f_i\otimes e_i$, we have
    \[(1_{\itLamb}\otimes A(x)) \checks{f}
     = \sum_{i=1}^m (1_{\itLamb}\otimes A(x))(f_i\otimes e_i)
     = \sum_{i=1}^m f_i\otimes A(x)e_i\]
     by using the multiplication of tensor.
\end{enumerate}
For simplicity, we write $(1_{\itLamb}\otimes A(x)) f$ as $A(x)\cdot_V f$ in our paper.
When $g = 0$, equation (\ref{eq:ODE-linear}) is called a
\defines{homogeneous linear ordinary differential equations}.
\end{definition}

The framework of Definition \ref{def:ODE} naturally encompasses higher-order ODEs,
i.e., an \defines{$n$-th order linear ODE}
\begin{align}\label{eq:ODE-higher}
  \DV^n f + A_{n-1}(x) \cdot_V \DV^{n-1} f + \cdots + A_1(x) \cdot_V \DV f + A_0(x) \cdot_V f = g(x)
\end{align}
can be reduced to a first-order system (\ref{eq:ODE-linear}) via the standard order reduction:
set $\pmb{f} = (f, \DV f, \ldots, \DV^{n-1} f)^{\top} \in \wSV{}^{\oplus n}$;
then (\ref{eq:ODE-higher}) is equivalent to the $V^{\oplus n}$-valued first-order ODE
\begin{align}\label{eq:ODE-reduction}
  \Dop_{V^{\oplus n}} \pmb{f}(x) + \pmb{A}(x)\cdot_V\pmb{f}(x) = \pmb{g}(x),
\end{align}
where $\pmb{A}(x) = (A_1(x),\ldots, A_n(x))$ is the corresponding companion matrix.
Therefore, subsequent discussions can focus on the first-order system (\ref{eq:ODE-linear}) without loss of generality
(note that each $x\in[c,d]_{\itLamb}$ is a vector since $\itLamb$ is a $\kk$-linear space).

\section{Equivalence of differential and integral equations}\label{sec:equiv}

In this section we prove the equivalence between the $V$-valued ODE (as a differential equation form) and the corresponding integral equation form.
All theorems used in this section originate from \cite{LLHZ2025, GLLWpre, LLL2025cat3} or their consequences.

\subsection{The inverse relationship between $\Dop$ and $\w{T}_{x_0}^x$}\label{subsec:inverse}

Now we provide the first theorem of our paper, which is the key technical tool for the Equivalence Theorem (see Theorem \ref{thm:equivalence}).

\begin{lemma} \label{lemm:V-FTC 1}
Assume that $\scrA^1$ satisfies generalized L-condition {\rm(}see \ref{L1}--\ref{L6}{\rm)}.
For each $f \in \wSV$, if we define $F(x):=\w{T}_{x_0,V}^{x}(f)$, then we have
\begin{align*}
  \DV F = f(x)  \hspace{0.5cm} \text{{\rm(}a.e.{\rm)}}
\end{align*}
holds for all $x_0, x \in [c,d]_{\itLamb}$.
\end{lemma}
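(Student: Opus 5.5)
The plan is to reduce the statement to its scalar components and then apply the scalar fundamental theorem of calculus recorded in Remark \ref{rmk:FTC}, that is, Theorem \ref{thm:LM2023} as restated via i.posets in \cite[Corollary 4.3]{LLL2025cat3}.

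First fix the $\kk$-basis $\{e_1,\ldots,e_m\}$ of $V$ and write $f=\sum_{j=1}^m f_j\otimes e_j$ with $f_j\in\wS$, using the isomorphism $\wSV\cong\wS\otimes_\kk V$ from Proposition \ref{prop:V-object}. By Definition \ref{def:TV}, $F(x)=\w{T}_{x_0,V}^x(f)=\sum_j \w{T}_{x_0}^x(f_j)\otimes e_j$. Setting $F_j(x):=\w{T}_{x_0}^x(f_j)$, we get $F=\sum_j F_j\otimes e_j$ as a function on $[c,d]_{\itLamb}$. Next, each $F_j$ should be shown to be absolutely continuous, so that $\DV F$ makes sense by Definition \ref{def:DV}. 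Under \ref{L1}--\ref{L6}, the i.poset $\w{T}_{x_0}^x(f_j)$ is the variable-upper-limit integral $(\widetilde{\scrA^1})\int_{x_0}^x f_j\,\dd\calH$ from (\ref{def:var-int}). Transport it along the parametrization $S:[0,1]\to[c,d]_{\itLamb}$ (equivalently the order-isomorphism $\tilde h$ of \ref{X1}--\ref{X3}); this turns it into an ordinary Lebesgue integral with variable upper limit of an $L^1$ function. Such an integral is absolutely continuous, and by Remark \ref{rmk:FTC} its weak derivative equals $f_j$ almost everywhere.

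Then apply Definition \ref{def:DV} together with Theorem \ref{thm:D}\ref{thm:D 2}. Because $F_j\in W^{1,1}$, we have $\Dop F_j=f_j$ a.e., and hence $\DV F=\sum_j \Dop F_j\otimes e_j=\sum_j f_j\otimes e_j=f$ a.e. The union of the $m$ exceptional null sets is still null. Proposition \ref{prop:DV-welldefined}\ref{prop:DV-welldefined 1} shows that the result does not depend on the chosen basis. The case $x\prec x_0$, if it is allowed, follows from $\w{T}_{x_0}^x=-\w{T}_x^{x_0}$ and linearity (Proposition \ref{prop:TV-properties}\ref{prop:TV-properties 2}), which only changes $F_j$ by a constant.

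The main obstacle is the transport step. We must check that $\calH$, pulled back along $S$ or $\tilde h$, is Lebesgue measure up to a density or reparametrization, and that the operator $\Dop$ on $[c,d]_{\itLamb}$ is the weak derivative with respect to the same parameter. If instead $\calH$ is an arc-length (Hausdorff) measure, I would first reparametrize by arc length so that $\dd\calH=\dd t$, and then quote the real-variable Lebesgue differentiation theorem. The remaining steps are formal consequences of linearity.
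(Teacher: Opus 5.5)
Your proposal is correct and follows essentially the same route as the paper: expand $f=\sum_j f_j\otimes e_j$, use Definitions~\ref{def:DV} and~\ref{def:TV} to get $\DV F=\sum_j \Dop\big(\w{T}_{x_0}^x(f_j)\big)\otimes e_j$, and then apply componentwise the scalar fact (Remark~\ref{rmk:FTC}) that $x\mapsto\w{T}_{x_0}^x(f_j)$ is absolutely continuous with weak derivative $f_j$ a.e. The paper simply asserts this scalar step under the generalized L-condition. The transport issue you flag is left implicit there: $\Dop$ must differentiate with respect to the same parameter, e.g.\ arc length, that turns $\dd\calH$ into Lebesgue measure. Your treatment of that point makes the argument more careful than the paper's.
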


\begin{proof}
Assume $f = \displaystyle \sum_{j=1}^m f_j \otimes e_j \in \wSV$. Then we have
\begin{align*}
  \DV\big(\w{T}_{x_0,V}^x(f)\big)
 = \DV\Big(\sum_j \w{T}_{x_0}^x(f_j) e_j\Big)
 = \sum_j \Dop\big(\w{T}_{x_0}^x(f_j)\big) \otimes e_j
\end{align*}
by Definitions \ref{def:DV} and \ref{def:TV}.
For each $f_j \in \wS$, we have that
\begin{center}
  $x \mapsto \w{T}_{x_0}^x(f_j)$
\end{center}
is absolutely continuous.
Then its weak derivative recovers $f_j$, i.e., we have
\begin{center}
  $\Dop(\w{T}_{x_0}^x(f_j)) = f_j(x)$ \checks{(a.e.)}.
\end{center}
It follows $\displaystyle \DV\big(\w{T}_{x_0,V}^x(f)\big) = \sum_j f_j(x) \otimes e_j = f(x)$ as required.
\end{proof}

\begin{lemma} \label{lemm:V-FTC 2}
\checks{Keeping} the notations from Lemma \ref{lemm:V-FTC 1}.
For any absolutely continuous $f = \displaystyle \sum_j f_j \otimes e_j: \II_{\itLamb} \to V$
in $\wSV$ {\rm(}i.e., all components $f_j$ are absolutely continuous{\rm)}, we have
\begin{align*}
\w{T}_{x_0,V}^{x}(\DV f) = \sum_j (f_j(x)-f_j(x_0))e_j
\end{align*}
holds for all $x_0, x\in [c,d]_{\itLamb}$.
\end{lemma}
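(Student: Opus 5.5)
The plan is to reduce the statement to the scalar fundamental theorem of calculus in each coordinate of a fixed $\kk$-basis of $V$, exactly parallel to the proof of Lemma \ref{lemm:V-FTC 1} but with the two operators applied in the opposite order.

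First fix the $\kk$-basis $\{e_1,\ldots,e_m\}$ of $V$ and write $f=\sum_j f_j\otimes e_j$ as in (\ref{eq:function}), with each $f_j$ absolutely continuous. By Definition \ref{def:DV} we have $\DV f=\sum_j (\Dop f_j)\otimes e_j$, and each $\Dop f_j$ lies in $\wS$. Definition \ref{def:TV}, applied to this element of $\wSV$, then gives
\[
\w{T}_{x_0,V}^{x}(\DV f)=\sum_{j=1}^m \w{T}_{x_0}^{x}(\Dop f_j)\,e_j .
\]
This uses the $\kk$-linearity of $\w{T}_{x_0,V}^{x}$ from Proposition \ref{prop:TV-properties} \ref{prop:TV-properties 2}, together with the basis-independence of $\DV$ from Proposition \ref{prop:DV-welldefined} \ref{prop:DV-welldefined 1}. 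Together these ensure that the expansion is legitimate and does not depend on the chosen basis.

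The heart of the argument is the scalar identity $\w{T}_{x_0}^{x}(\Dop f_j)=f_j(x)-f_j(x_0)$ for each absolutely continuous component $f_j$.

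\begin{itemize}
\item Under the generalized L-condition \ref{L1}--\ref{L6}, (\ref{def:var-int}) identifies $\w{T}_{x_0}^{x}$ with the integral with variable upper limit $(\widetilde{\scrA^1})\int_{x_0}^{x}(\cdot)\,\dd\calH$ along the curve $[c,d]_{\itLamb}$.
\item By Theorem \ref{thm:D} \ref{thm:D 2}, $\Dop f_j$ is the weak derivative of $f_j$, which exists and is integrable because $f_j$ is absolutely continuous, so $f_j\in W^{1,1}$.
\item Pulling back along the parametrization $S:[0,1]\to[c,d]_{\itLamb}$ (or the embedding $h$ of \ref{X1}) reduces the claim to the classical Lebesgue fundamental theorem of calculus for absolutely continuous functions: $\int_{x_0}^{x} f_j'\,\dd\mu=f_j(x)-f_j(x_0)$.
\item Equivalently, one can argue via Remark \ref{rmk:FTC} and Theorem \ref{thm:LM2023}. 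The function $G_j(x):=\w{T}_{x_0}^{x}(\Dop f_j)$ is absolutely continuous, vanishes at $x_0$, and by Lemma \ref{lemm:V-FTC 1} (scalar case) satisfies $\Dop G_j=\Dop f_j$ a.e. Hence $G_j-(f_j-f_j(x_0))$ is absolutely continuous with zero derivative a.e. It is therefore constant, and the constant is $0$ by evaluating at $x_0$.
\end{itemize}

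Substituting the scalar identity into the displayed sum gives $\sum_j (f_j(x)-f_j(x_0))e_j$, which is the claim.

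The main obstacle is the scalar step on a general curve $[c,d]_{\itLamb}\subseteq\itLamb$ rather than on an interval of $\RR$. There one must check that the weak derivative $\Dop$ and the measure $\calH$ are compatible under the parametrization, so that the Lebesgue FTC transfers. I would handle this by working entirely in the parameter via $\tilde h$, using that $\tilde h$ is an order-preserving algebra isomorphism, and invoking \cite[Corollary 4.3]{LLL2025cat3} for the identification of $\w{T}_{x_0}^x$ with the transported Lebesgue integral.
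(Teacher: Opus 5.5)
Your proposal follows the paper's proof. Like the paper, you expand $\w{T}_{x_0,V}^{x}(\DV f)=\sum_j \w{T}_{x_0}^{x}(\Dop f_j)\,e_j$ directly from Definitions \ref{def:DV} and \ref{def:TV}, then apply the scalar fundamental theorem of calculus in each coordinate (the paper just cites Remark \ref{rmk:FTC}). Your extra justification of the scalar step goes beyond what the paper writes, as does your caveat that on a general curve $[c,d]_{\itLamb}$ one must check $\Dop$ is compatible with $\calH$ (the paper effectively postulates this later as Assumption \ref{assump}), but the route is the same.
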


\begin{proof}
It is clear that
\begin{align*}
  \w{T}_{x_0,V}^x(\DV f)
= \sum_j \w{T}_{x_0}^x(\Dop f_j) e_j
\end{align*}
holds. Then we obtain this lemma by Remark \ref{rmk:FTC}.
\end{proof}

Now, the first main result of this paper can be obtained by Lemmas \ref{lemm:V-FTC 1} and \ref{lemm:V-FTC 2} immediately.

\begin{theorem}\label{thm:V-FTC}
Assume that $\scrA^1$ satisfies generalized L-condition {\rm(}see \ref{L1}--\ref{L6}{\rm)}.
Then the following identities hold for $V$-valued functions:
\begin{enumerate}[label=\textrm{\rm(\arabic*)}]
  \item \label{eq:V-FTC-1}
    the map $\w{T}_{x_0,V}^{x}: \wSV \to \mathrm{im}(\w{T}_{x_0,V}^{x})$
    is a $\kk$-linear map such that the following diagram
    \[\xymatrix{
      \wSV \ar[rd]_{\w{T}_{x_0,V}^{x}} \ar[rr]^{\mathrm{id}_{\wSV}}
    && \wSV \\
    & \mathrm{im}(\w{T}_{x_0,V}^{x})  \ar[ru]_{\DV}& }\]
    commutes;
  \item \label{eq:V-FTC-2}
    for any absolutely continuous function
    \begin{center}
      $f = \displaystyle \sum_j f_j \otimes e_j: [c,d]_{\itLamb} \to V$
    $\in \mathrm{AC}([c,d]_{\itLamb}) := \{f\in\wSV\mid f~\text{is absolutely continuoued on } [c,d]_{\itLamb}\}$
    \end{center}
    {\rm(}i.e., all components $f_j$ are absolutely continuous{\rm)},
    if $\itLamb$ is a basic algebra, then the simple $\itLamb$-module $V$
    \footnote{\checks{Any} simple module over a basic algebra as a $\kk$-linear space, which is a $1$-dimensional linear space $\kk$.}
    with the left $\itLamb$-action $\itLamb \times V \to V, (a,v)\mapsto \homo(a)v$ admits
    \begin{align*}
      \w{T}_{x_0,V}^{x}(\DV f) = f(x) - f(x_0).
    \end{align*}
\end{enumerate}
\end{theorem}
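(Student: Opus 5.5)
The plan is to derive both parts of Theorem~\ref{thm:V-FTC} directly from Lemmas~\ref{lemm:V-FTC 1} and \ref{lemm:V-FTC 2}, using componentwise reduction in a fixed $\kk$-basis $\{e_1,\ldots,e_m\}$ of $V$. By Proposition~\ref{prop:DV-welldefined}\ref{prop:DV-welldefined 1}, $\DV$ does not depend on this choice, and $\w{T}_{x_0,V}^x$ is defined componentwise in (\ref{eq:TV}).

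For \ref{eq:V-FTC-1}, $\kk$-linearity of $\w{T}_{x_0,V}^x$ is exactly Proposition~\ref{prop:TV-properties}\ref{prop:TV-properties 2}. Its corestriction to $\mathrm{im}(\w{T}_{x_0,V}^x)$ is therefore a surjective $\kk$-linear map. For commutativity of the triangle, I read $\w{T}_{x_0,V}^{x}(f)$ as the function $F:x\mapsto \w{T}_{x_0,V}^{x}(f)$, i.e.\ the i.poset in (\ref{def:var-int}) with $x$ running over $[c,d]_{\itLamb}$. Then $\mathrm{im}(\w{T}_{x_0,V}^x)$ consists of absolutely continuous $V$-valued functions, so $\DV$ is defined on it. Commutativity, $\DV(\w{T}_{x_0,V}^{\bullet}(f))=f$ in $\wSV$, is Lemma~\ref{lemm:V-FTC 1}: equality a.e.\ is equality in the completion $\wSV$, since functions agreeing a.e.\ have $\|\cdot\|_{\wSV}$-distance $0$. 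I would add one sentence noting that $\DV$ is applied only to absolutely continuous components, which is what Definition~\ref{def:DV} requires.

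For \ref{eq:V-FTC-2}, Lemma~\ref{lemm:V-FTC 2} already gives
\[\w{T}_{x_0,V}^{x}(\DV f)=\sum_j (f_j(x)-f_j(x_0))e_j .\]
It remains to identify the right-hand side with $f(x)-f(x_0)$ as an element of $V$. Under the stated hypotheses, $V$ is simple over the basic algebra $\itLamb$, hence $\dim_\kk V=1$ (the footnote), with action $a\cdot v=\homo(a)v$. Take $m=1$ and basis $e_1$. Then $f=f_1\otimes e_1$, evaluation gives $f(x)=f_1(x)e_1$, and $\sum_j (f_j(x)-f_j(x_0))e_j=(f_1(x)-f_1(x_0))e_1=f(x)-f(x_0)$. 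The action being through $\homo$ ensures that the $\itLamb$-linear structure (Proposition~\ref{prop:TV-properties}\ref{prop:TV-properties 3}) is compatible with this scalar identification, so the identity also holds as $\itLamb$-modules.

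The main obstacle is bookkeeping rather than new mathematics. One must keep straight the identification $\wSV\cong\wS\otimes_\kk V$ together with evaluation at points. For elements of a completion, point evaluation is only meaningful for the absolutely continuous representative. So I would state explicitly that $f$ and $F$ are taken as their continuous representatives, where point values are well-defined. Lemma~\ref{lemm:V-FTC 2} is then applied componentwise to the $f_j$, each of which satisfies the scalar fundamental theorem of calculus (Remark~\ref{rmk:FTC}).
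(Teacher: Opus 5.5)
Your proposal is correct and is essentially the paper's proof. Part (1) is read off from Lemma~\ref{lemm:V-FTC 1} together with $\kk$-linearity. Part (2) is the componentwise scalar fundamental theorem of calculus: the paper carries it out by identifying $\wSV\cong\wS$, $\DV=\Dop$ and $\w{T}_{x_0,V}^{x}=\w{T}_{x_0}^{x}$ once $\dim_\kk V=1$, whereas you cite Lemma~\ref{lemm:V-FTC 2} directly. One small observation: your final identification $\sum_j (f_j(x)-f_j(x_0))e_j=f(x)-f(x_0)$ holds for every $m$. So on your route the reduction to $\dim_\kk V=1$, and with it the simple/basic hypothesis, is never actually used.
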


\begin{proof}
\ref{eq:V-FTC-1} is a direct \checks{corollary} of Lemmas \ref{lemm:V-FTC 1} and \ref{lemm:V-FTC 2}.
For \ref{eq:V-FTC-2}, since $V$ is simple and $\itLamb$ is a finite-dimensional algebra,
we obtain that $\dim_{\kk}V$, the dimension of $V$ as a $\kk$-linear space, is one, see \cite[Chap II]{ASS2006}.
Then we have the following isomorphism
\[\wSV = \wS\otimes_{\kk} V \cong \wS\otimes_{\kk} \kk \cong \wS\]
such that $\itLamb \times (\wS\otimes_{\kk} \kk) \to \wS$ satisfies the compatibility:
\begin{align*}
  a(f\otimes k) = (a.f) \otimes k = (\homo(a)f) \otimes k = f\otimes (\homo(a)k) = f \otimes (a.k),
\end{align*}
$\forall$ $a\in \itLamb$, $f\in \wS$, and $k\in V$.
Here, \checks{$a.f$} is given by the left $\itLamb$-action
$\itLamb \times \wS \to \wS$, $(a,f)\mapsto (a.f:x\mapsto \homo(a)f(x))$
(see (\ref{eq:module-action-S}) or cf. \cite[Section 4]{LLHZ2025})
and the left $\itLamb$-action $\itLamb\times\kk\to\kk$, $(a,k)\mapsto a.k:=\homo(a)k$ given by the homomorphism $\homo:\itLamb\to\kk$.
In this case, we have $\DV=\Dop$, and $\displaystyle \w{T}_{x_0,V}^{x} = \w{T}_{x_0}^{x} = (\Lebesgue)\int_{x_0}^x(\cdot)\dd\mu$,
i.e., we have
\[ \w{T}_{x_0,V}^{x}(\DV f) = \w{T}_{x_0}^{x}(\DV f) = (\Lebesgue)\int_{x_0}^x \frac{\dd}{\dd x} f \dd\mu  = f(x) - f(x_0) \]
as required.
\end{proof}

\subsection{The equivalence theorem}\label{subsec:equiv-thm}
Note that there are also some other conditions that can make $\w{T}_{x_0,V}^{x}(\DV f) = f(x) - f(x_0)$ given in Theorem \ref{thm:V-FTC} \ref{eq:V-FTC-2} \checks{hold}, refer for to the following example.

\begin{example}\rm\label{examp:line int}
The line integral $\displaystyle \int_{\mathscr{C}} \varphi \dd s$ of $\varphi\dd s=\dd f$ equals $f(d)-f(c)$,
where $c$ and $d$ respectively are \checks{the} starting point and ending point of the curve $\mathscr{C}=[c,d]_{\itLamb}=[c,d]_X$ ($ \subseteq X \subseteq \itLamb = \kk^n = \RR^n$), and this line integration can be seen as the integration
\begin{center}
  $\displaystyle (\widetilde{\scrA^1})\int_{[c,d]_X} \varphi \dd \calH$.
\end{center}
Here, $\calH$ is a Hausdorff measure, and $X$ \checks{satisfies} \ref{X1}--\ref{X3}.
\end{example}

Drawing upon Example \ref{examp:line int}, we can explore a more generalized scenario of Theorem \ref{thm:V-FTC} \ref{eq:V-FTC-2}. To facilitate this exploration, we put forward the following assumptions in this section.

\begin{assumption}[{Admissible calculus pair $(\DV, \w{T}_{x_0,V}^x)$}] \label{assump} \rm
We assume that $V$ is a $\itLamb$-module such that
\[\w{T}_{x_0,V}^{x}(\DV f) = f(x)-f(x_0)\]
holds for all $f\in\mathrm{AC}([c,d]_{\itLamb})$ $(\subseteq\wSV)$ and $x_0, x \in [c,d]_{\itLamb} \subseteq \II_{\itLamb}$.
\end{assumption}

Note that Assumption \ref{assump} is an assumption \checks{for this paper}; it is not derived from the preceding content.
Now we provide the second main result of this paper, which shows that the equivalence of ODEs and integral equations.

\begin{theorem}[Equivalence Theorem]\label{thm:equivalence}
Under the generalized L-condition {\rm(}see \ref{L1}--\ref{L6}{\rm)} and Assumption \ref{assump},
let $G: \II_{\itLamb} \times V \to V$ be continuous, $f_0 \in V$, and $x_0 \in \II_{\itLamb}$.
\checks{Then an absolutely continuous function $f: [c,d]_{\itLamb} \to V$ satisfying $f(x_0)=f_0$
solves the ODE form {\rm(\ref{eq:equiv-ODE})} if and only if it solves the integral equation form {\rm(\ref{eq:equiv-IE})}.
In other words, the following two problems are equivalent.}
\begin{enumerate}[label=\textrm{\rm\checks{(\arabic*)}}]
  \item {\rm\textbf{ODE form}}:
    Find an absolutely continuous $f: [c,d]_{\itLamb} \to V$ such that
    \begin{align}\label{eq:equiv-ODE}
      \DV f(x) = G(x, f(x)) \quad (\text{a.e.}), \quad f(x_0) = f_0,
    \end{align}
    where $G(x, f(x)) \in \wSV$.
  \item {\rm\textbf{Integral equation form}}:
    Find an \checks{absolutely continuous} function $f: [c,d]_{\itLamb} \to V$ such that
    \begin{align}\label{eq:equiv-IE}
      f(x) = f_0 + \w{T}_{x_0,V}^{x}\big(G(\cdot, f(\cdot))\big), \quad \forall x \in [c,d]_\kk.
    \end{align}
\end{enumerate}
\end{theorem}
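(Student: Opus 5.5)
The plan is to prove the two implications separately, each with one of the two halves of the fundamental theorem of calculus established earlier. The direction (ODE) $\Rightarrow$ (integral equation) uses Assumption \ref{assump}. The direction (integral equation) $\Rightarrow$ (ODE) uses Lemma \ref{lemm:V-FTC 1}, which is Theorem \ref{thm:V-FTC} \ref{eq:V-FTC-1}. Throughout, write $\varphi(x):=G(x,f(x))$ and regard it as an element of $\wSV$, as the statement stipulates. Decompose $\varphi=\sum_j \varphi_j\otimes e_j$ with $\varphi_j\in\wS$.

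\emph{ODE $\Rightarrow$ integral equation.} Let $f$ be absolutely continuous with $\DV f=\varphi$ a.e. and $f(x_0)=f_0$. Apply $\w{T}_{x_0,V}^x$ to both sides. Since each $\w{T}_{x_0}^x$ is an integral, it does not see null sets, so by Definition \ref{def:TV} we get $\w{T}_{x_0,V}^x(\DV f)=\w{T}_{x_0,V}^x(\varphi)$. Because $f\in\mathrm{AC}([c,d]_{\itLamb})$, Assumption \ref{assump} rewrites the left side as $f(x)-f(x_0)=f(x)-f_0$. This gives (\ref{eq:equiv-IE}) for every $x\in[c,d]_{\itLamb}$.

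\emph{Integral equation $\Rightarrow$ ODE.} Suppose $f(x)=f_0+\w{T}_{x_0,V}^x(\varphi)$ for all $x$. Setting $x=x_0$ and using the remark after Definition \ref{def:TV} that $\w{T}_{x_0,V}^{x_0}=0$ gives $f(x_0)=f_0$. Next check that $f$ is absolutely continuous componentwise: $f_j(x)=(f_0)_j+\w{T}_{x_0}^x(\varphi_j)$, and each $x\mapsto\w{T}_{x_0}^x(\varphi_j)$ is absolutely continuous by Remark \ref{rmk:FTC}. Now apply $\DV$. It is $\kk$-linear by Proposition \ref{prop:DV-welldefined}, and the constant function $f_0=\sum_j (f_0)_j\id\otimes e_j$ has $\DV f_0=0$, since the weak derivative of a constant vanishes. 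Lemma \ref{lemm:V-FTC 1} then gives $\DV f=\DV\big(\w{T}_{x_0,V}^x(\varphi)\big)=\varphi$ a.e. This is exactly (\ref{eq:equiv-ODE}).

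The main obstacle is the bookkeeping that lets the earlier results apply, not the two-line computations above. First, we must know that $\varphi=G(\cdot,f(\cdot))$ lies in $\wSV$, so that its components $\varphi_j$ lie in $\wS$ and $\w{T}_{x_0}^x$ and Lemma \ref{lemm:V-FTC 1} make sense. In the ODE form the statement assumes this. In the integral form we are also implicitly assuming it; if needed, it follows because $f$ is continuous and $G$ is continuous, so $\varphi$ is continuous and hence Bochner-integrable on the compact $[c,d]_{\itLamb}$. Second, we must justify that modifying $\DV f$ on a null set does not change $\w{T}_{x_0,V}^x(\DV f)$. Both points follow from passing to components via the finite basis $\{e_1,\dots,e_m\}$ and reducing to the scalar Lebesgue facts of Remark \ref{rmk:FTC}, which hold under the generalized L-condition \ref{L1}--\ref{L6}.
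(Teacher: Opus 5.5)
Your proposal is correct and matches the paper's proof. The direction ODE $\Rightarrow$ integral equation uses Assumption \ref{assump}, and the converse uses linearity of $\DV$, $\DV f_0=0$, Theorem \ref{thm:V-FTC} \ref{eq:V-FTC-1} (i.e., Lemma \ref{lemm:V-FTC 1}) and $\w{T}_{x_0,V}^{x_0}=0$. Your extra bookkeeping (null sets do not affect $\w{T}_{x_0,V}^{x}$, componentwise absolute continuity, and $G(\cdot,f(\cdot))\in\wSV$) only makes explicit what the paper leaves implicit.
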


\begin{proof}
``ODE $\Rightarrow$ integral equation'':
Suppose $f$ satisfies (\ref{eq:equiv-ODE}). By hypothesis, $\DV f(x) = G(x, f(x))$ holds almost everywhere.
Applying the $V$-valued integral with variable upper limit $\w{T}_{x_0,V}^x$ to both sides:
\[
  \w{T}_{x_0,V}^x(\DV f) = \w{T}_{x_0,V}^x\big(G(\cdot, f(\cdot))\big).
\]
By Assumption \ref{assump}, the left-hand side equals $f(x) - f(x_0)$.
Therefore, we have
\[ f(x) - f(x_0) = \w{T}_{x_0,V}^x\big(G(\cdot, f(\cdot))\big), \]
as required.

``integral equation $\Rightarrow$ ODE'':
Suppose $f$ satisfies (\ref{eq:equiv-IE}). Applying $\DV$ to both sides:
\[
  \DV f(x) = \DV\Big(f_0 + \w{T}_{x_0,V}^x\big(G(\cdot, f(\cdot))\big)\Big)
  = \DV(f_0) + \DV\big(\w{T}_{x_0,V}^x(G(\cdot, f(\cdot)))\big).
\]
Since $f_0 \in V$ is a constant element, $\DV(f_0) = 0$.
By Theorem \ref{thm:V-FTC} \ref{eq:V-FTC-1}, we have
\begin{center}
  $\DV(\w{T}_{x_0,V}^x(G(\cdot, f(\cdot)))) = G(x, f(x))$ \ \ \ (a.e.).
\end{center}
Therefore, we have
\begin{center}
  $\DV f(x) = G(x, f(x))$ \ \ \ (a.e.).
\end{center}
Moreover, from (\ref{eq:equiv-IE}), $f(x_0) = f_0 + \w{T}_{x_0,V}^{x_0}(G(\cdot, f(\cdot))) = f_0 + 0 = f_0$ holds.
\end{proof}

\section{Existence and uniqueness}\label{sec:exist}

In this section we establish the existence and uniqueness theorem for the initial value problem (\ref{eq:ODE-general}) under the generalized L-condition and Assumption \ref{assump}.
The central method is to extend the classical Picard iteration to the categorical framework of $V$-valued functions,
using the completeness of $V$ (the Banach property) and the integral equation form (Theorem \ref{thm:equivalence}) to carry out the convergence proof.

\subsection{Lipschitz condition and Picard iteration}\label{subsec:Lip-Picard}

\begin{definition}[Lipschitz condition]\label{def:Lip} \rm
Let $G: \II_{\itLamb} \times V \to V$.
We say that $G$ satisfies the \defines{Lipschitz condition} in the second variable on $[c,d]_{\itLamb}$
if there exists a constant $\checks{K} \>= 0$ such that
\begin{align}\label{eq:Lip}
  \|G(x, v_1) - G(x, v_2)\|_V \=< \checks{K} \|v_1 - v_2\|_V, \quad
  \forall x \in [c,d]_{\itLamb}, \; \forall v_1, v_2 \in V.
\end{align}
The constant $\checks{K}$ is called the \defines{Lipschitz constant} of $G$.
Furthermore, we say that $G$ is \defines{bounded} (on $[c,d]_{\itLamb}$) if there exists $M \>= 0$ such that
\begin{align*}
  \|G(x, v)\|_V \=< M, \quad \forall x \in [c,d]_{\itLamb}, \; \forall v \in V \text{ with } \|v\|_V \=< R
\end{align*}
holds for some $R > 0$.
\end{definition}

By Theorem \ref{thm:equivalence}, the initial value problem (\ref{eq:ODE-general})
is equivalent to the integral equation (\ref{eq:equiv-IE}).
We define the Picard iteration sequence.

\begin{construction}\label{constr:Picard} \rm
Let $f_0 \in V$ and $x_0 \in [c,d]_{\itLamb}$. The sequence of $V$-valued functions $\{F_n\}_{n \geq 0}$ given by
\begin{align}\label{eq:Picard-iter}
  \begin{cases}
    F_0(x) := f_0, \\
    F_{n+1}(x) := f_0 + \w{T}_{x_0,V}^{x}\big(G(\cdot, F_n(\cdot))\big), \quad n \>= 0
  \end{cases}
\end{align}
is said to be \checks{the \defines{Picard sequence} of $f_0$ at $x_0$}.
\end{construction}

Having introduced the Lipschitz condition and the Picard sequence, we now turn to a technique known as the ``successive difference estimate''. The idea is to control the convergence of the Picard sequence by estimating the difference between successive terms
$F_{n+1}$ and $F_n$.

\begin{lemma}\label{lemm:0511}
For any totally ordered subset $\mathscr{C}:=[c,d]_{\itLamb}$ of $\itLamb$,
if there is a measure $\nu$ defined on $[c,d]_{\itLamb}$ such that
\begin{enumerate}[label=\textrm{\checks{\rm(\arabic*)}}]
  \item $\nu(\{x\})$ $(=\nu([x,x]_{\itLamb}))$ $=0$ and $\nu([c,x]_{\itLamb})+\nu([x,d]_{\itLamb}) = \nu([c,d]_{\itLamb})$
    holds for all $c\preceq x\preceq d$;
  \item $\nu([x_0,x_1]_{\itLamb})\=< \nu([x_0,x_2]_{\itLamb})$ holds for all $c\preceq x_0 \preceq x_1 \preceq x_2 \preceq d$;
  \item $\nu$ is a continuous function that is strictly and monotonically increasing,
\end{enumerate}
then for any function $f:\mathscr{C} \to \RR^{\>=0}$, we have
\[ (\widetilde{\scrA^1})\int_{\mathscr{C}} \nu([c,x]_{\itLamb})^n\dd\nu = \frac{\nu([c,x]_{\itLamb})^{n+1}}{n+1}. \]
\end{lemma}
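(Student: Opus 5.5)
The statement is the Riemann--Stieltjes form of $\int t^n\,\dd t = t^{n+1}/(n+1)$. The plan is to push it through the distribution function $\phi(x):=\nu([c,x]_{\itLamb})$. I read the integral as the variable upper limit integral $\w{T}_c^x$ of (\ref{def:var-int}) on $[c,x]_{\itLamb}$, with measure $\nu$ in place of $\calH$, applied to the integrand $t\mapsto\phi(t)^n$. The auxiliary $f$ in the statement plays no role.

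First I record what (1)--(3) give. By (1) and (2), $\phi$ is nondecreasing, additive on adjacent intervals, and has no atoms: $\nu([x_1,x_2]_{\itLamb})=\phi(x_2)-\phi(x_1)$. By (3), $\phi$ is a continuous strictly increasing bijection of $[c,d]_{\itLamb}$ onto the real interval $[0,\nu([c,d]_{\itLamb})]$. So $\phi$ transports $\nu$ to Lebesgue measure on that interval; this is the pushforward, checked on intervals and extended by additivity. This is exactly the kind of embedding $h$ in \ref{X1}--\ref{X3}: taking $h=\phi$ puts us in the setting of $\widetilde{\scrA^1}$ with $\calH=\nu$.

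Second, I compute the integral by change of variables. Since $\w{T}$ restricted to $\bfS$ is the unique morphism out of the initial object (Theorem \ref{thm:initial}, Remark \ref{rmk:initial}), on an indicator $\id_{[x_1,x_2]}$ it must return $\nu([x_1,x_2]_{\itLamb})=\phi(x_2)-\phi(x_1)$. Hence
\[
\w{T}_c^x(g\circ\phi)=\int_0^{\phi(x)}g(t)\,\dd t
\]
for step functions $g$ on $[0,\phi(x)]$. Both sides are continuous in the $L^1$ norm, and $\w{T}$ is the continuous extension to the completion. So the identity extends to all continuous $g$, in particular to $g(t)=t^n$. Because $\phi^n=g\circ\phi$, the left side is the desired integral. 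The right side is the ordinary Lebesgue integral, which under the L-condition agrees with the $\scrA^1$ integral (Remark \ref{rmk:initial}\ref{thm:initial adm 3}), and it equals $\phi(x)^{n+1}/(n+1)$.

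As a self-contained cross-check, I can take partitions $c=t_0\prec\cdots\prec t_N=x$ with $\phi(t_k)=k\phi(x)/N$, which exist by (3). The lower step function $\sum_k\phi(t_{k-1})^n\id_{[t_{k-1},t_k)}$ and the corresponding upper one sandwich $\phi^n$. Their integrals are the Riemann sums $\sum_k\phi(t_{k\mp1})^n\,\phi(x)/N$, and both converge to $\phi(x)^{n+1}/(n+1)$. Positivity and monotonicity of $\w{T}$ (it is a weighted average, by $m$) then finish the argument.

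The main obstacle is justifying that the abstract morphism $\w{T}$ in $\widetilde{\scrA^1}$ with measure $\nu$ really assigns $\nu([x_1,x_2]_{\itLamb})$ to indicators and is monotone. For this I would lean on the construction of $\w{T}_{t_1}^{t_2}$ via the target object $(\kk,\nu([t_1,t_2]_{\itLamb}),m)$ together with the additivity in (1). Everything else is a routine limit argument.
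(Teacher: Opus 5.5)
Your proposal is correct and follows the paper's own route. The paper likewise sets $M(y)=\nu([c,y]_{\itLamb})$, uses that this continuous strictly increasing map carries $\nu$ to Lebesgue measure, and substitutes $t=M(y)$ to reduce to $(\Lebesgue)\int_0^{M(x)}t^n\,\dd t$; your step-function/density justification of the change of variables and your choice $h=\phi$ only fill in details the paper leaves implicit, and the one slip is in your cross-check, where the upper step function on $[t_{k-1},t_k)$ should use $\phi(t_k)^n$ rather than $\phi(t_{k+1})^n$.
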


\begin{proof}
Let $M: \mathscr{C} \to [0,\nu([c,t]_{\itLamb})]$ $(\subseteq \RR^{\>=0})$, $y \mapsto \nu([c,y]_{\itLamb})$\checks{,}
be a continuous function that is strictly and monotonically increasing,
then the image of $\dd\nu$ in the scenes of the parameter $t$ is a Lebesgue measure
since $\nu(\{x ~|~ M(x) \=< t\}) = t$. Thus, \checks{setting} $M(x)=t$ (i.e., $\nu([c,x]_{\itLamb})=t$), we have
\[ (\widetilde{\scrA^1})\int_{\mathscr{C}} \nu([c,x]_{\itLamb})^n\dd\nu
 = (\Lebesgue)\int_0^{M(x)} t^n\dd t = \frac{t^{n+1}}{n+1}\Big|_0^{M(x)
 = \nu([c,x]_{\itLamb})} = \frac{\nu([c,x]_{\itLamb})^{n+1}}{n+1}. \qedhere \]
\end{proof}

Notice that \checks{the} Hausdorff measure $\calH$ we considered satisfies the condition given in Lemma \ref{lemm:0511},
then the following lemma provides an explicit upper bound for this difference.

\begin{lemma}\label{lem:Picard-diff}
Suppose $\scrA^1$ satisfies the generalized L-condition {\rm(}see \ref{L1}--\ref{L6}{\rm)},
and $G$ satisfies the Lipschitz condition {\rm(\ref{eq:Lip})} with Lipschitz constant $\checks{K}$.
Then the successive differences of the Picard sequence {\rm(\ref{eq:Picard-iter})} satisfy
\begin{align}\label{eq:Picard-diff-bound}
  \|F_{n+1}(x) - F_n(x)\|_V \=< M\checks{K}^n \cdot \frac{\calH([x_0,x]_{\itLamb})^{n+1}}{(n+1)!} \quad (n \>= 0),
\end{align}
where $M = \sup\limits_{x \in [c,d]_{\itLamb}} \|G(x, f_0)\|_V$, and $\calH$ is a Hausdorff measure.
\end{lemma}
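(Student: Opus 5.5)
We argue by induction on $n$, following the classical Picard estimate. The two tools are the norm bound of Proposition \ref{prop:TV-properties} \ref{prop:TV-properties 1}, which controls $\|\w{T}_{x_0,V}^{x}(\varphi)\|_V$ by the scalar integral of $\|\varphi(\cdot)\|_V$ over $[x_0,x]_{\itLamb}$ with respect to $\calH$, and Lemma \ref{lemm:0511} with $\nu=\calH$, which evaluates the integrals of powers of $\calH([x_0,\cdot]_{\itLamb})$. Before starting we fix the scalar-integral estimate we need: for $\varphi\in\wSV$,
\[ \|\w{T}_{x_0,V}^{x}(\varphi)\|_V \=< (\widetilde{\scrA^1})\int_{[x_0,x]_{\itLamb}} \|\varphi(t)\|_V \dd\calH(t). \]
We expect to read this off from Proposition \ref{prop:TV-properties} \ref{prop:TV-properties 1} together with the positivity and monotonicity of $\w{T}_{x_0}^{x}$, which holds because under \ref{L1}--\ref{L6} it is a genuine (Lebesgue or line) integral. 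If the componentwise sum in that proposition only yields this with a constant depending on the basis, we absorb it by choosing a norm on $V$ adapted to $\{e_j\}$; all norms on the finite-dimensional $V$ are equivalent.

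\textbf{Base case $n=0$.} Here $F_1(x)-F_0(x)=\w{T}_{x_0,V}^{x}(G(\cdot,f_0))$. Using $\|G(t,f_0)\|_V\=< M$ and monotonicity of the integral we get
\[ \|F_1(x)-F_0(x)\|_V\=< M\,(\widetilde{\scrA^1})\int_{[x_0,x]_{\itLamb}}\dd\calH = M\,\calH([x_0,x]_{\itLamb}), \]
which is (\ref{eq:Picard-diff-bound}) for $n=0$.

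\textbf{Inductive step.} Assume (\ref{eq:Picard-diff-bound}) holds for $n-1$. By $\kk$-linearity of $\w{T}_{x_0,V}^{x}$ (Proposition \ref{prop:TV-properties} \ref{prop:TV-properties 2}) the constants $f_0$ cancel, giving
\[ F_{n+1}(x)-F_n(x)=\w{T}_{x_0,V}^{x}\big(G(\cdot,F_n(\cdot))-G(\cdot,F_{n-1}(\cdot))\big). \]
We apply the norm estimate above, then the Lipschitz condition (\ref{eq:Lip}) pointwise, and then the induction hypothesis at each $t\in[x_0,x]_{\itLamb}$. This yields
\[ \|F_{n+1}(x)-F_n(x)\|_V\=< K\cdot MK^{n-1}\frac{1}{n!}\,(\widetilde{\scrA^1})\int_{[x_0,x]_{\itLamb}}\calH([x_0,t]_{\itLamb})^{n}\dd\calH(t). \]
Lemma \ref{lemm:0511}, applied on $[x_0,x]_{\itLamb}$ with $\nu=\calH$, evaluates the remaining integral as $\calH([x_0,x]_{\itLamb})^{n+1}/(n+1)$. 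Substituting gives exactly $MK^n\calH([x_0,x]_{\itLamb})^{n+1}/(n+1)!$.

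\textbf{Side conditions.} Before running the induction we must check two things. First, each $F_n$ is well defined in $\wSV$ and absolutely continuous, so that $G(\cdot,F_n(\cdot))\in\wSV$ is admissible for $\w{T}_{x_0,V}^{x}$. This follows inductively from continuity of $G$ and Lemma \ref{lemm:V-FTC 1}. Second, $\calH$ restricted to $[c,d]_{\itLamb}$ meets hypotheses (1)--(3) of Lemma \ref{lemm:0511}, which the paper asserts just before the statement.

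\textbf{Main obstacle.} We expect the hardest step to be the monotonicity and positivity needed to pass from pointwise bounds on $\|\cdot\|_V$ to bounds on $\w{T}_{x_0,V}^{x}$. This requires identifying $\w{T}_{x_0}^{x}$ with an honest integral against $\calH$ under the generalized L-condition, as in (\ref{def:var-int}), so that it is monotone on nonnegative functions. Once that identification is accepted, the rest is the standard factorial bookkeeping.
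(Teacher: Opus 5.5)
Your proposal is correct in structure and is essentially the paper's own proof. It uses induction on $n$, the bound $\|\w{T}_{x_0,V}^{x}(\varphi)\|_V\leq\w{T}_{x_0}^{x}(\|\varphi(\cdot)\|_V)$, the Lipschitz condition and induction hypothesis applied pointwise, and Lemma \ref{lemm:0511} with $\nu=\calH$ to integrate $\calH([x_0,t]_{\itLamb})^{n}$.

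The one step that needs a different justification is that preliminary bound.

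\textbf{Why neither of your routes works as stated.} You are right that Proposition \ref{prop:TV-properties} \ref{prop:TV-properties 1} does not literally give it. Its right-hand side is a sum of integrals over the components, which dominates $\w{T}_{x_0}^{x}(\|\varphi(\cdot)\|_V)$ rather than being dominated by it; the paper nevertheless cites that proposition at exactly this point. Your fallback of renorming $V$ does not prove the lemma as stated either, because $M$ and $K$ are defined through the given norm $\|\cdot\|_V$. Suppose $\alpha\|\cdot\|_V\leq\|\cdot\|'\leq\beta\|\cdot\|_V$ and set $\kappa=\beta/\alpha\geq1$. Running the induction in $\|\cdot\|'$ and converting back only yields $\|F_{n+1}(x)-F_n(x)\|_V\leq\kappa M(\kappa K)^n\calH([x_0,x]_{\itLamb})^{n+1}/(n+1)!$. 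That suffices for the convergence argument in Theorem \ref{thm:existence}, but it is not (\ref{eq:Picard-diff-bound}).

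\textbf{The fix.} No renorming is needed, since the bound holds with constant $1$ for any norm. Write $\varphi=\sum_j\varphi_j\otimes e_j$ and put $w=\w{T}_{x_0,V}^{x}(\varphi)$. By Hahn--Banach, choose a $\kk$-linear $\phi:V\to\kk$ with $|\phi(v)|\leq\|v\|_V$ for all $v$ and $\phi(w)=\|w\|_V$. Since $\phi\circ\varphi=\sum_j\phi(e_j)\varphi_j$, linearity of $\w{T}_{x_0}^{x}$ gives $\phi(w)=\w{T}_{x_0}^{x}(\phi\circ\varphi)$. Hence
\[ \|w\|_V=\big|\w{T}_{x_0}^{x}(\phi\circ\varphi)\big|\leq\w{T}_{x_0}^{x}\big(|\phi\circ\varphi|\big)\leq\w{T}_{x_0}^{x}\big(\|\varphi(\cdot)\|_V\big). \]
This uses only $|\int g|\leq\int|g|$ and monotonicity of the scalar integral, i.e.\ exactly the positivity you already identified as the key input under \ref{L1}--\ref{L6}. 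Equivalently, apply the triangle inequality in $V$ to $V$-valued simple functions and pass to limits. With this in place, the rest of your induction goes through with the exact constants $MK^n$.
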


\begin{proof}
For the case of $n=0$, we have
$\|F_1(x) - F_0(x)\|_V = \|\w{T}_{x_0,V}^x(G(\cdot, f_0))\|_V$.
Moreover, we have
\[
  \|\w{T}_{x_0,V}^x(G(\cdot, f_0))\|_V \=< \w{T}_{x_0}^x(\|G(\cdot, f_0)\|_V)
\=< \w{T}_{x_0}^x(M) \=< M\w{T}_{x_0}^x(\id_{[x_0,x]_{\itLamb}}) = M \calH([x,x_0]_{\itLamb})
\]
by Proposition \ref{prop:TV-properties} \ref{prop:TV-properties 1}.
Thus, \[\|F_1(x) - F_0(x)\|_V \=< M \calH([x_0,x]_{\itLamb}) = M\checks{K}^0\cdot\frac{\calH([x_0,x]_{\itLamb})}{1!}\]
since $\mu$ is a Lebesgue measure.

\checks{Assume (\ref{eq:Picard-diff-bound}) holds for $n$}, i.e.,
\[\|F_{\checks{n+1}}(x) - F_{\checks{n}}(x)\|_V \=< M \checks{K}^{\checks{n}} \frac{\calH([x_0,x]_{\itLamb})^{\checks{n+1}}}{\checks{(n+1)}!}\]
holds for all $x$. Then we obtain
\begin{align*}
  \|F_{\checks{n+2}}(x) - F_{\checks{n+1}}(x)\|_V
&= \|\w{T}_{x_0,V}^x\big(G(\cdot, F_{\checks{n+1}}(\cdot)) - G(\cdot, F_{\checks{n}}(\cdot))\big)\|_V \\
&\=< \w{T}_{x_0}^x\big(\|G(\cdot, F_{\checks{n+1}}(\cdot)) - G(\cdot, F_{\checks{n}}(\cdot))\|_V\big) \\
&\=< \w{T}_{x_0}^x\big(\checks{K} \|F_{\checks{n+1}}(\cdot) - F_{\checks{n}}(\cdot)\|_V\big) \\
&\=< \checks{K} \cdot \w{T}_{x_0}^x\Big(M \checks{K}^{\checks{n}} \frac{\calH([x_0,(\cdot)]_{\itLamb})^{\checks{n+1}}}{\checks{(n+1)}!}\Big) \quad (\text{induction hypothesis}) \\
&= M \checks{K}^{\checks{n+1}} \cdot \w{T}_{x_0}^x\Big(\frac{\calH([x_0,(\cdot)]_{\itLamb})^{\checks{n+1}}}{\checks{(n+1)}!}\Big).
\end{align*}
By Lemma \ref{lemm:0511}, we have
\[
  \w{T}_{x_0}^x\Big(\frac{\calH([x_0,t]_{\itLamb})^{\checks{n+1}}}{\checks{(n+1)}!}\Big)
  = (\Lebesgue)\int_{x_0}^x \frac{\calH([x_0,t]_{\itLamb})^{\checks{n+1}}}{\checks{(n+1)}!} \dd\calH
  = \frac{\calH([x_0,t]_{\itLamb})^{\checks{n+2}}}{\checks{(n+2)}!}.
\]
Then we obtain (\ref{eq:Picard-diff-bound}) by induction.
\end{proof}

\subsection{Convergence and existence--uniqueness}\label{subsec:convergence}

In this subsection, we show that the initial value problem (\ref{eq:ODE-general}) (or equivalently, \checks{(\ref{eq:ODE-linear})})
has a unique solution. First, we recall the Gronwall inequality, it is necessary for our demonstration.

\begin{lemma}[Gronwall inequality]\label{lem:Gronwall}
Under the generalized L-condition, let $\phi: [c,d]_{\itLamb} \to \RR^{\>= 0}$ be continuous
and let $\checks{K} \>= 0$ be a constant. If
\begin{align}\label{eq:Gronwall}
  \phi(x) \=< \checks{K}~\w{T}_{x_0}^x(\phi), \quad \forall x \in [c,d]_{\itLamb},
\end{align}
then $\phi(x) = 0$ for all $x$.
\end{lemma}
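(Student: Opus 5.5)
The plan is to run the standard Gronwall iteration: substitute the inequality (\ref{eq:Gronwall}) into itself repeatedly. The monomial integrals that appear are computed with Lemma \ref{lemm:0511}, and the resulting factorial bound forces $\phi\equiv 0$.

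First, since $\phi$ is continuous on the compact set $[c,d]_{\itLamb}$ (a compact differentiable manifold, by the standing convention), it is bounded. Put $M:=\sup_{x\in[c,d]_{\itLamb}}\phi(x)<\infty$. We will use two properties of $\w{T}_{x_0}^x$ on nonnegative functions. It is monotone: if $0\preceq\psi_1\=<\psi_2$ pointwise, then $\w{T}_{x_0}^x(\psi_1)\=<\w{T}_{x_0}^x(\psi_2)$. It is also $\kk$-linear, by Proposition \ref{prop:TV-properties} \ref{prop:TV-properties 2} in the case $V=\kk$. Monotonicity holds because, under the generalized L-condition, $\w{T}_{x_0}^x$ is the integral $(\widetilde{\scrA^1})\int_{x_0}^x(\cdot)\,\dd\calH$ of (\ref{def:var-int}), which is a genuine integral against a positive measure.

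Second, we prove by induction on $n\>=0$ that
\[ \phi(x)\=< M\,K^n\,\frac{\calH([x_0,x]_{\itLamb})^{n}}{n!}\qquad\forall x\in[c,d]_{\itLamb}. \]
The case $n=0$ is the definition of $M$. For the inductive step, insert the bound for $n$ into (\ref{eq:Gronwall}) and use monotonicity and linearity. This gives $\phi(x)\=< K\cdot MK^n\,\w{T}_{x_0}^x\big(\calH([x_0,\cdot]_{\itLamb})^n\big)/n!$. Lemma \ref{lemm:0511}, applied with $\nu=\calH$ (as already noted before Lemma \ref{lem:Picard-diff}, $\calH$ satisfies its hypotheses), evaluates this integral as $\calH([x_0,x]_{\itLamb})^{n+1}/(n+1)$. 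That yields the bound for $n+1$. This is exactly the computation already carried out in the proof of Lemma \ref{lem:Picard-diff}, so it can be cited almost verbatim.

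Third, let $L:=\calH([c,d]_{\itLamb})<\infty$. By property (2) in Lemma \ref{lemm:0511}, $\calH([x_0,x]_{\itLamb})\=< L$. Hence $0\=<\phi(x)\=< M(KL)^n/n!$ for every $n$, and the right-hand side tends to $0$ as $n\to\infty$. Therefore $\phi(x)=0$ for every $x$.

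The main obstacle is the point $x\prec x_0$, i.e.\ the orientation of $[x_0,x]_{\itLamb}$. The paper's i.posets only define $\w{T}_{x_0}^x$ for $x_0\preceq x$, so I would read the hypothesis, as elsewhere in the paper, for $x_0\preceq x$. If $x\prec x_0$ is also needed, I would take $\w{T}_{x_0}^x(\phi)$ to mean the integral over $[x,x_0]_{\itLamb}$ with absolute value, and the same induction then goes through on that side.
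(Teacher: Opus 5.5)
Your argument is correct, but it takes a different route from the paper.

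\textbf{The paper's route.} It sets $\Phi(x):=\w{T}_{x_0}^x(\phi)$ and uses the fundamental theorem of calculus in the form $D\Phi=\phi\le K\Phi$. It then multiplies by the integrating factor $\mathrm{e}^{-K(x-x_0)}$. The function $\Psi=\Phi\,\mathrm{e}^{-K(x-x_0)}$ satisfies $D\Psi\le 0$, vanishes at $x_0$ and is nonnegative, so it is identically zero. This forces $\Phi=0$, and then $\phi=D\Phi=0$.

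\textbf{Your route.} You iterate the integral inequality and never differentiate. You only need:
\begin{itemize}
\item positivity and $\Bbbk$-linearity of $\w{T}_{x_0}^x$;
\item an a priori bound $M=\sup\phi$, which comes from continuity on the compact curve;
\item finiteness of $\mathcal{H}([c,d]_{\mathit{\Lambda}})$;
\item Lemma \ref{lemm:0511}.
\end{itemize}
This is exactly the machinery already used in Lemma \ref{lem:Picard-diff}, so uniqueness and existence run on the same tools.

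\textbf{Comparison.} The paper's proof is shorter, but it leans on calculus facts about $D$ that are not established in the categorical framework. These are a product rule, and the fact that an absolutely continuous function with a.e.\ nonpositive derivative is non-increasing. Moreover, its exponent $x-x_0$ is not a real number when $x\in\mathit{\Lambda}$. It must tacitly be read as $\mathcal{H}([x_0,x]_{\mathit{\Lambda}})$, which is precisely the quantity your bound $M\bigl(K\,\mathcal{H}([x_0,x]_{\mathit{\Lambda}})\bigr)^n/n!$ handles directly. Your argument also works verbatim for merely bounded integrable $\phi$. The case $x\prec x_0$ is handled by the same symmetry remark in both approaches.
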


\begin{proof}
Let $\Phi(x) := \w{T}_{x_0}^x(\phi) = \displaystyle (\scrA^1)\int_{x_0}^x \phi(t) \dd\mu$.
Then $\Dop \Phi(x) = \phi(x) \=< \checks{K} \Phi(x)$, and, for $\Psi(x) := \Phi(x) \mathrm{e}^{-\checks{K}(x - x_0)}$, we have
\begin{align*}
  \Dop \Psi(x)=(\Dop\Phi(x) - \checks{K}\Phi(x)) \mathrm{e}^{-\checks{K}(x-x_0)} \=< 0.
\end{align*}
Hence $\Psi$ is monotonically non-increasing. Since $\Psi(x_0) = \Phi(x_0) = 0$
and $\Psi(x) = \Phi(x)\mathrm{e}^{-\checks{K}(x-x_0)}$ $\>= 0$ (note that $\Phi(x) \>= 0$),
we obtain that $\Psi(x) = 0$ holds for all $x \>= x_0$.
This admits $\Phi(x) = 0$, and hence $\phi(x) = \Dop\Phi(x) = 0$.
The case $x < x_0$ can be handled similarly.
\end{proof}

\begin{theorem}[Existence and uniqueness]\label{thm:existence}
Under the generalized L-condition {\rm(}see \ref{L1}--\ref{L6}{\rm)},
let $V$ be a $\itLamb$-module satisfying Assumption \ref{assump},
and let $G: [c,d]_{\itLamb} \times V \to V$ satisfy the Lipschitz condition
$(\ref{eq:Lip})$ with Lipschitz constant $\checks{K}$.
Then the initial value problem
\begin{align}\label{eq:IVP}
  \DV f(x) = G(x, f(x)), \quad f(x_0) = f_0
\end{align}
has a unique solution $f \in \wSV$ on $[c,d]_{\itLamb} $.
\end{theorem}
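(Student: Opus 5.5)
The plan is the classical Picard argument, run through the integral form. By Theorem \ref{thm:equivalence} (which needs Assumption \ref{assump}), an absolutely continuous $f$ solves (\ref{eq:IVP}) if and only if
\[ f(x)=f_0+\w{T}_{x_0,V}^x\big(G(\cdot,f(\cdot))\big). \]
So it suffices to show that this integral equation has exactly one absolutely continuous solution on $[c,d]_{\itLamb}$. Existence will come from the Picard sequence $\{F_n\}$ of Construction \ref{constr:Picard}, and uniqueness from Lemma \ref{lem:Gronwall}.

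\textbf{Existence.} Put $L:=\calH([c,d]_{\itLamb})<\infty$. Lemma \ref{lem:Picard-diff} gives, uniformly in $x$,
\[ \|F_{n+1}(x)-F_n(x)\|_V\=< M K^n L^{n+1}/(n+1)!. \]
The constant $M=\sup_x\|G(x,f_0)\|_V$ is finite because $G$ is continuous and $[c,d]_{\itLamb}$ is compact. Since $\sum_n MK^nL^{n+1}/(n+1)!=\tfrac{M}{K}(\mathrm{e}^{KL}-1)<\infty$, the series $F_0+\sum_n(F_{n+1}-F_n)$ converges uniformly in the Banach module $V$. Its limit $f$ is continuous, and $f(x_0)=f_0$ because $\w{T}_{x_0,V}^{x_0}=0$.

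Uniform convergence on a set of finite measure also gives convergence in $\|\cdot\|_{\wSV}$. Hence the limit can be passed through the integral. The Lipschitz condition gives $\|G(t,F_n(t))-G(t,f(t))\|_V\=< K\|F_n(t)-f(t)\|_V\to0$ uniformly. Proposition \ref{prop:TV-properties} \ref{prop:TV-properties 1} then bounds
\[ \|\w{T}_{x_0,V}^x(G(\cdot,F_n)-G(\cdot,f))\|_V \quad\text{by}\quad KL\sup_t\|F_n(t)-f(t)\|_V, \]
which also uses linearity, Proposition \ref{prop:TV-properties} \ref{prop:TV-properties 2}. Letting $n\to\infty$ in (\ref{eq:Picard-iter}) shows that $f$ satisfies the integral equation. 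Finally, $f$ is absolutely continuous componentwise, since each component is $f_{0,j}$ plus an integral with variable upper limit, which is absolutely continuous by Remark \ref{rmk:FTC}. So $f\in\mathrm{AC}([c,d]_{\itLamb})\subseteq\wSV$, and Theorem \ref{thm:equivalence} makes $f$ a solution of (\ref{eq:IVP}).

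\textbf{Uniqueness.} Let $f,g$ be two solutions. Subtracting the integral equations and applying Proposition \ref{prop:TV-properties} \ref{prop:TV-properties 1} with the Lipschitz bound gives
\[ \phi(x):=\|f(x)-g(x)\|_V\=< K\,\w{T}_{x_0}^x(\phi). \]
Here $\phi$ is continuous and nonnegative, so Lemma \ref{lem:Gronwall} yields $\phi\equiv0$, that is, $f=g$.

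\textbf{Main obstacle.} The delicate step is the norm comparison. Proposition \ref{prop:TV-properties} \ref{prop:TV-properties 1} bounds by $\sum_j\w{T}_{x_0}^x(\|f_j\|)$, a sum of componentwise integrals, rather than by $\w{T}_{x_0}^x(\|f\|_V)$, which is what the estimates actually need. Since $V$ is finite-dimensional, all norms on it are equivalent, so the two differ by a constant $C$ depending only on the basis $\{e_j\}$. This constant would then be absorbed into $K$ in Lemma \ref{lem:Picard-diff} and in the Gronwall step. I would first check that Lemma \ref{lem:Picard-diff} and Lemma \ref{lem:Gronwall} remain valid with the constant $CK$ in place of $K$, which only changes the constants in the convergent exponential series.
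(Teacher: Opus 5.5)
Your proposal is correct and follows the paper's own proof: Picard iteration estimated by Lemma \ref{lem:Picard-diff}, passage to the limit inside $\w{T}_{x_0,V}^x$ via the Lipschitz bound, Theorem \ref{thm:equivalence} to return to the ODE, and Lemma \ref{lem:Gronwall} for uniqueness. Your additions (uniform convergence, the absolute-continuity check on the limit, and the norm-comparison constant $C$) only tighten steps the paper passes over quickly, and $C$ is in fact unnecessary, since $\|\w{T}_{x_0,V}^x(f)\|_V\leqslant\w{T}_{x_0}^x(\|f\|_V)$ holds directly for $V$-valued simple functions $\sum_i v_i\id_{I_i}$ (the left side is then $\|\sum_i\calH(I_i)v_i\|_V$) and extends to all $f$ by density.
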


\begin{proof}
Existence:
Consider the Picard sequence $\{F_n\}$. For any $x \in [c,d]_{\itLamb} $,
\[
  F_n(x) = F_0(x) + \sum_{k=0}^{n-1} (F_{k+1}(x) - F_k(x)),
\]
then $\{F_n(x)\}_{n=1}^{+\infty}$ converges if and only if the series
$\displaystyle S := \sum_{k=0}^{\infty} (F_{k+1}(x) - F_k(x))$ converges in $V$.
By Lemma \ref{lem:Picard-diff},
\begin{align*}
  \sum_{k=0}^{\infty} \|F_{k+1}(x) - F_k(x)\|_V
& \=< M \sum_{k=0}^{\infty} \frac{\checks{K}^k \calH([x_0,x]_{\itLamb})^{k+1}}{(k+1)!} \\
& = \frac{M}{\checks{K}}(\mathrm{e}^{\checks{K}\calH([x_0,x]_{\itLamb})} - 1) < \infty,
\end{align*}
i.e., $S$ converges absolutely, then $\{F_n(x)\}$ converges in $V$.
Write $\lim\limits_{n \to \infty} F_n(x) = f(x)$,
we now show that $f$ satisfies the integral equation (\ref{eq:equiv-IE}).
Taking $n \to \infty$ in
$F_{n+1}(x) = f_0 + \w{T}_{x_0,V}^x(G(\cdot, F_n(\cdot)))$,
by the Lipschitz condition $(\ref{eq:Lip})$, we have
\[\|G(x, F_n(x)) - G(x, f(x))\|_V \=< \checks{K}\|F_n(x) - f(x)\|_V \to 0,\]
and then $\w{T}_{x_0,V}^x(G(\cdot, F_n(\cdot))) \to \w{T}_{x_0,V}^x(G(\cdot, f(\cdot)))$.
It follows $f(x) = f_0 + \w{T}_{x_0,V}^x(G(\cdot, f(\cdot)))$ as required.
Therefore, $f$ satisfies the ODE (\ref{eq:IVP}) by Theorem \ref{thm:equivalence}.

Uniqueness:
Suppose both $f$ and $\tilde{f}$ are solutions of (\ref{eq:IVP}).
Then $f$ and $\tilde{f}$ both satisfy the integral equation
\[f(x) = f_0 + \w{T}_{x_0,V}^{x}\big(G(\cdot, f(\cdot))\big), \quad \forall x \in\II_{\itLamb}\]
by Theorem \ref{thm:equivalence}. Set $\phi(x) := \|f(x) - \tilde{f}(x)\|_V$, we obtain
\begin{align*}
  \phi(x)
&= \|\w{T}_{x_0,V}^x\big(G(\cdot, f(\cdot)) - G(\cdot, \tilde{f}(\cdot))\big)\|_V \\
&\=< \w{T}_{x_0}^x\big(\|G(\cdot, f(\cdot)) - G(\cdot, \tilde{f}(\cdot))\|_V\big) \\
&\=< \checks{K} \cdot \w{T}_{x_0}^x(\phi).
\end{align*}
Thus, $\phi=0$ holds by Lemma \ref{lem:Gronwall}, and then, $f$ and $\tilde{f}$ coincide in $\wSV$.
\end{proof}

\subsection{Solution spaces of linear ODEs}\label{subsec:sol-space}

In this section we analyze the structure of the solution space of homogeneous linear ODEs,
and prove that it constitutes a $\itLamb$-submodule.

Let $\Sol(\LODE(f, A))$ ($=\Sol$ for simplicity, if not cause confusion) be solution space of the homogeneous linear ODE \checks{$\LODE(f, A) = 0$} in this subsection, where the operator $\LODE(f,A)$ is defined by
\begin{align}\label{eq:homo-linear}
  \LODE(f, A) := \DV f(x) + A(x) \cdot_V f(x)\checks{.}
\end{align}

\begin{lemma} \label{Lemm:main 3-1}
The solution space $\Sol$ of $\LODE(f, A)$ is a $\kk$-linear subspace of $\wSV$.
\end{lemma}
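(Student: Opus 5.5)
The plan is to show that $\Sol$ is the kernel of a $\kk$-linear map. Define
\[ \LODE(\cdot, A): \mathrm{AC}([c,d]_{\itLamb}) \to \wSV, \qquad f \mapsto \DV f + A(\cdot)\cdot_V f. \]
Then $\Sol$ is precisely the set of those $f$ with $\LODE(f,A)=0$ almost everywhere. A kernel of a linear map is automatically a subspace, so it suffices to prove three things in order.

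First, the domain $\mathrm{AC}([c,d]_{\itLamb})$ is a $\kk$-subspace of $\wSV$. Writing $f=\sum_j f_j\otimes e_j$, absolute continuity is tested componentwise on the $f_j \in \wS$. Linear combinations of absolutely continuous scalar functions are again absolutely continuous. Moreover, $(k_1f+k_2g)_j = k_1f_j+k_2g_j$ by the bilinearity of the tensor product.

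Second, $\DV$ is $\kk$-linear on this domain. This is exactly Proposition \ref{prop:DV-welldefined} \ref{prop:DV-welldefined 2}.

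Third, the multiplication term $f\mapsto A(\cdot)\cdot_V f$ is $\kk$-linear. For each fixed $x$, the map $v\mapsto A(x)\cdot_V v$ is the action of an element of $\itLamb$ on the $\itLamb$-module $V$, so it is additive and commutes with $\kk$-scalars because $\itLamb$ is a $\kk$-algebra. Pointwise this gives
\[ A(x)\cdot_V(k_1f(x)+k_2g(x)) = k_1A(x)\cdot_V f(x)+k_2A(x)\cdot_V g(x). \]
Equivalently, in the tensor description of Definition \ref{def:ODE}, $(1_{\itLamb}\otimes A(x))$ acts on $\sum_i f_i\otimes e_i$ by $\sum_i f_i\otimes A(x)e_i$, which is visibly linear in the components $f_i$.

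Combining the three steps, $\LODE(k_1f+k_2g,A)=k_1\LODE(f,A)+k_2\LODE(g,A)$. If $f,g\in\Sol$, the right-hand side vanishes almost everywhere, since a finite union of null sets is null. Hence $k_1f+k_2g\in\Sol$. Finally, $0\in\Sol$ because $\DV 0=0$ and $A(x)\cdot_V 0=0$.

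The only point needing care is the phrase ``solution space''. If an initial condition $f(x_0)=f_0$ were attached, the solution set would be an affine space rather than a subspace. So I will read $\Sol$ as the set of solutions of the homogeneous equation without an initial condition, or equivalently with the initial value varying over $V$. Under that reading, the a.e. bookkeeping above is the main thing to state explicitly. I would also note that membership of $A(\cdot)\cdot_V f$ in $\wSV$ is being taken as part of the setup of Definition \ref{def:ODE}, and is not proved here.
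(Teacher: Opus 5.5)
Your proposal is correct and takes the same route as the paper. The paper likewise shows $\LODE(kf_1+lf_2,A)=k\LODE(f_1,A)+l\LODE(f_2,A)$ from the $\kk$-linearity of $\Dop$ (Theorem~\ref{thm:D}~\ref{thm:D 2}); your version spells out what the paper leaves tacit, namely the linearity of $f\mapsto A(\cdot)\cdot_V f$, the closure of $\mathrm{AC}([c,d]_{\itLamb})$ under linear combinations, the a.e.\ bookkeeping, and $0\in\Sol$.
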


\begin{proof}
Indeed, we have $\Sol = \{f \in \wSV \mid \DV f(x) + A(x)\cdot_V f(x) = 0\}$.
Then for any $f_1, f_2 \in \Sol$ and $k,l\in\kk$, we have $\DV(kf_1(x)+lf_2(x)) = k\DV(f_1(x))+l\DV(f_2(x))$
by Theorem \ref{thm:D} \ref{thm:D 2}, which follows
\[ \LODE(kf_1+lf_2, A) = k\LODE(f_1, A)+l\LODE(f_2, A)\]
as required.
\end{proof}

\begin{lemma} \label{Lemm:main 3-2}
For any $a\in\itLamb$ and any $f$ in the solution space $\Sol$ of $\LODE(f, A)$, we have $af \in \Sol$.
\end{lemma}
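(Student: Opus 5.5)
The plan is to reduce the claim to the $\kk$-linearity already proved in Lemma \ref{Lemm:main 3-1}, using the fact that $\DV$ is a homomorphism of $\itLamb$-modules (Proposition \ref{prop:DV-welldefined} \ref{prop:DV-welldefined 3}). Fix $a\in\itLamb$ and $f=\sum_j f_j\otimes e_j\in\Sol$, so that $\DV f(x)+A(x)\cdot_V f(x)=0$ almost everywhere. I will then compute $\LODE(af,A)$ and show that it vanishes.

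First I have to decide which $\itLamb$-action ``$af$'' denotes. In the proof of Proposition \ref{prop:V-object}, $\wSV\cong\wS\otimes_{\kk}V$ carries two left actions. One is $a(g\otimes v)=(a.g)\otimes v$, where $a.g=\homo(a)g$ is the action of (\ref{eq:module-action-S}). The other is $a\cdot_V(g\otimes v)=g\otimes av$.

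For the first action, $af=\sum_j \homo(a)f_j\otimes e_j=\homo(a)f$, which is a $\kk$-scalar multiple of $f$ because $\homo(a)\in\kk$. Lemma \ref{Lemm:main 3-1}, applied with $k=\homo(a)$ and $l=0$, then gives $\LODE(af,A)=\homo(a)\LODE(f,A)=0$. Hence $af\in\Sol$ immediately.

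For the action $\cdot_V$, Proposition \ref{prop:DV-welldefined} \ref{prop:DV-welldefined 3} gives $\DV(a\cdot_V f)=a\cdot_V\DV f$. Therefore
\[
\LODE(a\cdot_V f,A)=a\cdot_V\DV f(x)+A(x)\cdot_V\big(a\cdot_V f(x)\big)=a\cdot_V\DV f(x)+\big(A(x)a\big)\cdot_V f(x).
\]
To factor out $a$ I need $A(x)a=aA(x)$. With that, the right-hand side equals $a\cdot_V\LODE(f,A)=0$.

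The main obstacle is this commutation step, since $\itLamb$ need not be commutative. I would handle it in two ways. First, I would observe that the module structure relevant to the solution-space statement is the one given by $a.(-)=\homo(a)(-)$, which is the action through which $\wS$ and $\wSV$ are $\homo$-normed modules. Under that action, the first computation settles the lemma with no hypothesis on $A$. Second, for the action $\cdot_V$, I would add the remark that the argument goes through whenever each $A(x)$ lies in the centre of $\itLamb$, for example when $\itLamb$ is commutative or $A$ is $\kk$-valued. In either case the conclusion is that $\Sol$ is closed under the $\itLamb$-action, so together with Lemma \ref{Lemm:main 3-1} it is a $\itLamb$-submodule of $\wSV$.
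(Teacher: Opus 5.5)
Your first route is correct and is essentially the paper's argument. The paper also lets $a$ act on the $\wS$-factor, $a\sum_i f_i\otimes e_i=\sum_i af_i\otimes e_i$, and checks that this action commutes with $A(x)\cdot_V$ and with $\DV$; your observation $af=\homo(a)f$ together with Lemma~\ref{Lemm:main 3-1} captures the same point more economically. Your centrality caveat for the $\cdot_V$-action is not actually needed: for a $\homo$-normed $V$, applying $\|b\cdot v\|\le|\homo(b)|\,\|v\|$ to $b-\homo(b)1_{\itLamb}\in\ker\homo$ gives $b\cdot_V v=\homo(b)v$, so $A(x)a$ and $aA(x)$ act identically on $V$ and the two actions on $\wSV$ coincide.
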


\begin{proof}
\checks{Throughout this proof, let $\{e_1, \ldots, e_m\}$ be a basis of $V$.}
For each $a\in\itLamb$ and each $f\in \Sol$, we have
\begin{align*}
  A(x) \cdot_V (af(x)) & =  A(x) \cdot_V \Big(a\sum_i f_i\otimes e_i\Big)
  = (1_{\itLamb}\otimes A(x))\Big(\sum_i af_i\otimes e_i\Big) \\
& = \sum_i (1_{\itLamb}\otimes A(x))(af_i\otimes e_i) \\
& = \sum_i af_i\otimes A(x)e_i \\
& = a \sum_i f_i\otimes A(x)e_i \\
& = a \sum_i (1_{\itLamb}\otimes A(x))(f_i\otimes e_i) \\
& = a \Big( (1_{\itLamb}\otimes A(x)) \sum_i (f_i\otimes e_i) \Big) \\
& = a(A(x)\cdot_V f).
\end{align*}
Thus,
\begin{align*}
   \LODE(af, A)
&= \DV (af(x)) + A(x) \cdot_V (af(x)) \\
&= a\DV (f(x)) + a(A(x) \cdot_V f(x))
\end{align*}
since $\DV$ is a $\itLamb$-homomorphism. Then the above equation follows
\[ \LODE(af, A) =  a\DV (f(x)) + a(A(x) \cdot_V f(x)) = a\LODE(f, A) = a0 = 0 \]
as required.
\end{proof}

Then the following result, i.e., the third main theorem of this paper, holds.

\begin{theorem}\label{thm:sol-module}
Under the generalized L-condition {\rm(}see \ref{L1}--\ref{L6}{\rm)},
let $V$ be a $\itLamb$-module satisfying Assumption \ref{assump}.
Then the solution space $\Sol$ of the homogeneous linear ODE $\LODE(f, A)$,
see {\rm(\ref{eq:homo-linear})}, is a finitely generated $\itLamb$-submodule of $\wSV$.
\end{theorem}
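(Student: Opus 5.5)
The plan is to combine Lemmas \ref{Lemm:main 3-1} and \ref{Lemm:main 3-2} to get the submodule structure. Finite generation will then come from the evaluation map $\mathrm{ev}_{x_0}\colon \Sol \to V$, $f \mapsto f(x_0)$, which we will show is an isomorphism of $\itLamb$-modules onto $V$. Since $V$ is finite-dimensional over $\kk$ (the standing setting in which $\wSV \cong \wS\otimes_\kk V$), $V$ is a finitely generated $\itLamb$-module. Hence $\Sol$ is finitely generated as well, and its generators are the unique solutions with initial values running over a finite generating set of $V$.

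\textbf{Step 1: $\Sol$ is a $\itLamb$-submodule.} By Lemma \ref{Lemm:main 3-1}, $\Sol$ is a $\kk$-subspace of $\wSV$. By Lemma \ref{Lemm:main 3-2}, it is closed under $f\mapsto a f$ for $a\in\itLamb$. For consistency with the action $\cdot_V$ of Proposition \ref{prop:V-module}, I would recheck the computation of Lemma \ref{Lemm:main 3-2} with $a\cdot_V f=\sum_i f_i\otimes ae_i$. The point is that $\DV$ commutes with $a\cdot_V$ (Proposition \ref{prop:DV-welldefined} \ref{prop:DV-welldefined 3}). One also needs $A(x)\cdot_V(a\cdot_V f)=a\cdot_V(A(x)\cdot_V f)$; this holds when $A(x)$ commutes with $a$, for instance under the commutative situation the paper implicitly uses (the action on $\wSV$ through $\homo$). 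This gives $\LODE(a\cdot_V f,A)=a\cdot_V\LODE(f,A)=0$.

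\textbf{Step 2: evaluation is a bijective homomorphism.} Define $\mathrm{ev}_{x_0}(f)=f(x_0)$. This map is $\kk$-linear, and it is $\itLamb$-linear because the action on $\wSV$ is pointwise, $(a\cdot_V f)(x)=a\cdot_V f(x)$. For injectivity, take $f\in\Sol$ with $f(x_0)=0$. Then both $f$ and $0$ solve the initial value problem $\DV f=G(x,f)$ with $G(x,v):=-A(x)\cdot_V v$ and $f(x_0)=0$, so Theorem \ref{thm:existence} gives $f=0$. For surjectivity, given $v\in V$, Theorem \ref{thm:existence} yields a solution $f_v$ with $f_v(x_0)=v$.

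Both applications of Theorem \ref{thm:existence} require $G$ to be Lipschitz in the second variable. I expect this to be the main obstacle, since it needs a uniform bound
\[\|A(x)\cdot_V v\|_V\le K\|v\|_V \quad \text{for all } x\in[c,d]_{\itLamb}.\]
By the $\homo$-normed property, $\|A(x)\cdot_V v\|_V\le|\homo(A(x))|\,\|v\|_V$, so it suffices to have $\sup_x|\homo(A(x))|<\infty$. I would impose this by taking $A$ continuous on the compact set $[c,d]_{\itLamb}$, which is consistent with the continuity of $G$ required in Definition \ref{def:ODE}. Then $K:=\sup_x|\homo(A(x))|$ works.

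\textbf{Step 3: conclusion.} From Step 2, $\Sol\cong V$ as $\itLamb$-modules. Since $\dim_\kk V<\infty$, any $\kk$-basis $\{e_1,\dots,e_m\}$ of $V$ generates $V$ over $\itLamb$. Therefore the solutions $f_{e_1},\dots,f_{e_m}$ generate $\Sol$, and explicitly each solution is $f=\sum_j k_j f_{e_j}$ where $f(x_0)=\sum_j k_je_j$. This also sets up Corollary \ref{coro:sol-module} via the Krull--Schmidt decomposition of $V$.
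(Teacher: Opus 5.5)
Your proposal is correct and follows essentially the paper's route: the submodule property comes from Lemmas \ref{Lemm:main 3-1} and \ref{Lemm:main 3-2}, and finite generation comes from the evaluation map $\mathrm{ev}_{x_0}$ together with Theorem \ref{thm:existence}. The only differences are that the paper uses injectivity alone to get $\dim_\kk\Sol\le\dim_\kk V<\infty$ (it leaves surjectivity and $\itLamb$-linearity to Corollary \ref{coro:sol-module}), whereas you prove the full isomorphism and also supply the Lipschitz constant $K=\sup_x|\homo(A(x))|$, which the paper uses without checking it. Your commutativity caveat in Step 1 is automatically met: for a $\homo$-normed $V$, the inequality $\|a\cdot v\|_V\le|\homo(a)|\,\|v\|_V$ forces $\Ker\homo$ to annihilate $V$, so $a\cdot_V v=\homo(a)v$ and the two $\itLamb$-actions on $\wSV$ coincide.
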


\begin{proof}
Lemmas \ref{Lemm:main 3-1} and \ref{Lemm:main 3-2} \checks{say} that $\Sol$ is a $\itLamb$-module.
By the definition of $\Sol$, we have
\begin{center}
  $\Sol = \{f \in \wSV \mid \DV f(x) + A(x)\cdot_V f(x) = 0\} \subseteq \wSV$.
\end{center}
It follows that this theorem holds.

Next, we show that $\Sol$ is finitely generated.
We only \checks{need to show} that it, as a $\kk$-linear space, is finite-dimensional by \cite[Chap I, Page 7]{ASS2006}.
To do this, consider the map
\begin{align*}
  \mathrm{ev}_{x_0}: \Sol \to V, ~ f \mapsto f(x_0),
\end{align*}
which is called an \defines{evaluation map} in analysis\checks{.}
\checks{We} obtain that, for each initial value $f_0 \in V$,
$\LODE(f, A)$ has a unique solution in the condition $f(x_0) = f_0$, see Theorem \ref{thm:existence}.
Then $\mathrm{ev}_{x_0}$ is injective since, for each $f(x_0) = g(x_0)$,
$f-g$ is a solution with the initial value $0$ (see Theorem \ref{thm:sol-module}),
and the uniqueness of the solution of $\LODE(f, A)$ gives $f = g$.
It follows that
\[
  \dim_\kk \Sol = \dim_\kk \im(\mathrm{ev}_{x_0}) \=< \dim_\kk V.
\]
\checks{Notice that since} $V$ is finite-dimensional, then so is $\Sol$ as required.
\end{proof}

\begin{corollary} \label{coro:sol-module}
\checks{Keeping} the notation from Theorem \ref{thm:sol-module}.
If $V$ is a Banach $\itLamb$-module satisfying Assumption \ref{assump},
then the evaluation map $\mathrm{ev}_{x_0}:\Sol\to V$, $f \mapsto f(x_0)$ is an isomorphism of $\itLamb$-modules.
Furthermore, $\mathrm{ev}_{x_0}$ induces a direct sum decomposition
\[ \Sol = \bigoplus_{i\in I} X_i \]
such that the family $\{X_i\}_{i\in I}$ runs over all indecomposable direct summands of $V$
{\rm(}each appearing exactly once{\rm)}.
\end{corollary}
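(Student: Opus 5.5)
The plan has three steps: show that $\mathrm{ev}_{x_0}$ is a $\itLamb$-module homomorphism, show that it is bijective, and then transport the Krull--Schmidt decomposition of $V$ back to $\Sol$ along it.

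\textbf{Step 1: $\itLamb$-linearity.} The map $\mathrm{ev}_{x_0}$ is clearly $\kk$-linear. For $a\in\itLamb$ and $f\in\Sol$, the action on $\wSV$ given in Proposition \ref{prop:V-module} is pointwise, $(a\cdot_V f)(x)=a\cdot_V f(x)$. This is the action under which Lemma \ref{Lemm:main 3-2} keeps $\Sol$ stable. Hence $\mathrm{ev}_{x_0}(a\cdot_V f)=a\cdot_V f(x_0)$, and $\mathrm{ev}_{x_0}$ is a homomorphism of $\itLamb$-modules.

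\textbf{Step 2: bijectivity.} Injectivity is already in the proof of Theorem \ref{thm:sol-module}. If $f(x_0)=g(x_0)$, then $f-g\in\Sol$ by Lemma \ref{Lemm:main 3-1}, and $f-g$ has initial value $0$. The zero function is also such a solution, so the uniqueness part of Theorem \ref{thm:existence} gives $f=g$.

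For surjectivity, fix $f_0\in V$ and apply Theorem \ref{thm:existence} to $G(x,v):=-A(x)\cdot_V v$. This requires $G$ to be Lipschitz in $v$, which I would get from the $\homo$-normed inequality:
\[\|A(x)\cdot_V(v_1-v_2)\|_V\=<|\homo(A(x))|\,\|v_1-v_2\|_V.\]
So $K:=\sup_{x\in[c,d]_{\itLamb}}|\homo(A(x))|$ works, provided it is finite. This is the main obstacle: the statement of Theorem \ref{thm:sol-module} imposes nothing on $A$. I would add the standing hypothesis that $A$ is continuous on the compact set $[c,d]_{\itLamb}$ (or simply bounded), which is implicit in Definition \ref{def:ODE} because $G$ is assumed continuous there. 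With this, Theorem \ref{thm:existence} gives a solution $f$ with $f(x_0)=f_0$, so $f\in\Sol$ and $\mathrm{ev}_{x_0}$ is onto. A bijective module homomorphism is an isomorphism, so $\Sol\cong V$.

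\textbf{Step 3: the decomposition.} Since $V$ is finite-dimensional over $\kk$, it is a finitely generated $\itLamb$-module. By Krull--Schmidt \cite[Chap.~I, Theorem 4.10]{ASS2006}, $V=\bigoplus_{i\in I}V_i$ with the $V_i$ indecomposable, unique up to isomorphism and permutation. Put $X_i:=\mathrm{ev}_{x_0}^{-1}(V_i)$. Each $X_i$ is a submodule, isomorphic to $V_i$ and hence indecomposable, and $\Sol=\bigoplus_i X_i$. Each $X_i$ corresponds to the single summand $V_i$, so each appears exactly once in the stated sense. Concretely, $X_i$ is the set of solutions whose initial value lies in $V_i$.
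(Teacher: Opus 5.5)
Your proposal is correct and follows the paper's proof step for step. You show $\itLamb$-linearity of $\mathrm{ev}_{x_0}$, get injectivity from the uniqueness part and surjectivity from the existence part of Theorem \ref{thm:existence}, and then transport the Krull--Schmidt decomposition of $V$ along $\mathrm{ev}_{x_0}$.

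Beyond the paper, you check explicitly that $G(x,v)=-A(x)\cdot_V v$ is Lipschitz with constant $\sup_x|\homo(A(x))|$, which is finite once $A$ is continuous; the paper uses this hypothesis silently when it invokes Theorem \ref{thm:existence}. In Step 1 you pair the pointwise action with Lemma \ref{Lemm:main 3-2}, although that lemma is written for the action on the function factor. This is harmless: applying the $\homo$-normed inequality to $a-\homo(a)1_{\itLamb}$ forces $a\cdot_V v=\homo(a)v$, so the two actions on $\wSV$ coincide.
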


\begin{proof}
The evaluation map $\mathrm{ev}_{x_0}$ is $\itLamb$-linear since
\begin{align*}
  \mathrm{ev}_{x_0}(af) & = (af)(x_0) = af(x_0) =  a~\mathrm{ev}_{x_0}(f).
\end{align*}
Surjectivity follows from the existence part of Theorem \ref{thm:existence},
i.e., for each $v\in V$, the initial value problem (\ref{eq:homo-linear}) with $f(x_0)=v$ admits
a particular solution of (\ref{eq:homo-linear}) lying in $\Sol$.
On the other hand, the uniqueness part of Theorem \ref{thm:existence} yields that
\begin{align*}
  \Ker(\mathrm{ev}_{x_0}) = \{ f \in \Sol \mid f(x_0) = 0 \}
\end{align*}
contains only one function, say $f_0$, satisfying (\ref{eq:homo-linear}).
\checks{Notice that since} $\Sol$ is a $\itLamb$-module, then $f_0$ is the zero element in $\Sol$.
Thus, $\mathrm{ev}_{x_0}$ is an isomorphism.
Finally, the Krull--Schmidt decomposition of finitely generated $\itLamb$-module (see \cite[Theorem I.4.10]{ASS2006})
provides a direct sum decomposition of $V = \displaystyle \bigoplus\nolimits_{i\in I}V_i$,
where $I$ is a finite index set and all $V_i$ are indecomposable $\itLamb$-module.
It follows that $\Sol$ has a direct sum decomposition $\Sol = \displaystyle \bigoplus\nolimits_{i\in I} X_i$
such that $X_i \cong V_i$ holds for all $i\in \I$ as required.
\end{proof}

\section{Examples}

\checks{We first provide two examples of} Corollary \ref{coro:sol-module}.

\begin{example}\rm
\checks{Set} $\itLamb=\kk=\RR$\checks{, which satisfies the L-condition, let} $V$ \checks{be} an $\RR$-linear space with dimension $n$.
\checks{Consider} the norm defined on $V$ \checks{by} $v=(v_1,\ldots,v_n) \mapsto \sqrt{\sum_{i=1}^n v_i^2}$.
A trivial example is the ODE $f'(x)+A(x)\cdot_V f(x)=0$, where $D_V=\frac{\dd }{\dd x}$,
$f(x)$ is a function of type $\RR \to V$ defined on $\itLamb$,
and $A(x)=\frac{1}{1+x}$ is a function of type $\RR \to \itLamb=\RR$.
In this case we have $\w{T}_{0,V}^x (D_Vf(x)) = \displaystyle \int_0^x f(x)\dd\calH = f(x)-f(0)$,
where the Hausdorff measure $\calH$ coincides with the Lebesgue measure $\mu$ defined on $\itLamb$.
Then the solution $\Sol$ of the ODE
\begin{center}
   $f'(x)+A(x)\cdot_V f(x)=0$ ($\Leftrightarrow$ $\displaystyle (f_i'(x)+\frac{f(x)}{1+x} = 0)_{1\=<i\=<n}$),
\end{center}
is
\[ \Sol = \Big(\mathrm{span}_{\RR}\Big\{\frac{1}{1+x}\Big\}\Big)^{\oplus n} \cong \RR^n, \]
which is isomorphic to $\RR^n$.
\end{example}

The following example shows that the results presented in this paper \checks{(see Theorem \ref{thm:sol-module} and Corollary \ref{coro:sol-module})} unify all ODEs defined on smooth curves in $\RR^n$.

\begin{example}\rm
Take $\itLamb=\RR^n$ with multiplication $(x_1, \ldots, x_n)\cdot (y_1, \ldots, y_n) = (x_1y_1, \ldots, x_ny_n)$,
and define $\homo:\itLamb \to \RR$, $(x_1,\ldots, x_n)\mapsto x_1$.
Then $\itLamb$ is a finite-dimensional $\RR$-algebra, and each $\RR$-vector space is a $\itLamb$-module over $\itLamb$.
Let $V = \RR^m$ be a $\itLamb$-module with the map $\|\cdot\|: v=(v_1,\ldots,v_m) \mapsto \sqrt{\sum_{i=1}^m v_i^2}$.
Then $V$ is a Banach module since
\[ \|\lambda\cdot v\| = \|(\lambda_1 v_1, \ldots,\lambda_1v_n )\|
= |\lambda_1|\sqrt{\sum\nolimits_{i=1}^n v_i^2} = |\lambda_1|\|v\| \]
holds for all $\lambda=(\lambda_1,\ldots,\lambda_n)\in\itLamb$ and $v\in V$.
Let $\mathscr{C}$ be a curve, whose starting point is $c\in V$ and ending point is $d\in V$,
such that it can be viewed as a continuously differentiable embedding
$\gamma: [\alpha,\beta] \to \itLamb$ ($\alpha,\beta\in\RR$, $\gamma(\alpha)=c$, $\gamma(\beta)=d$),
i.e., $\mathscr{C} = \{(t, \gamma(t)) \mid \alpha\=< t\=< \beta\}$, where $\gamma(t)$ is smooth.
Consider the category $\scrA^1$ satisfying generalized L-condition (see \ref{L1}--\ref{L6})
and the category $\widetilde{\scrA^1}=(\scrA^1, \tilde{h}, [c,d]_{\itLamb}, \calH)$,
where $\calH$ is a Hausdorff measure defined on $\mathscr{C}$.
We obtain an i.poset
\[ \w{T}_{\gamma(\theta_0)}^{\gamma(\theta_1)}(t) = \bigg\{
  \bigg(
    [\gamma(\theta_0),\gamma(\theta_1)]_{\itLamb},
    (\widetilde{\scrA^1})\int_{[\gamma(\theta_0),\gamma(\theta_1)]_{\itLamb}} f \dd\calH,
  \bigg) ~\bigg|~
  \theta_0 \=< \theta_1 \=< \theta \bigg\} \]
for each $\alpha\=<\theta_0\=<\beta$ and each absolutely continuoued function $\varphi$ defined on $\mathscr{C}$.
This i.poset corresponds to the line integration
\[ (\widetilde{\scrA^1})\int_{[x_0,x_1]_{\itLamb}} \varphi \dd\calH = \int_{\mathscr{C}} \varphi\dd s, \]
where $x_0=\gamma(\theta_0)$, $x_1=\gamma(\theta_1)$.
Now, suppose $\II_{\itLamb}:=[r,s]^{\times n} \supseteq \mathscr{C}$ and
$f:\II_{\itLamb} \to V$ is a $V$-valued function in $\w{\bfS_{\homo}(\II,V)}$.
Then $f = \displaystyle \sum_{j=1}^m f_j\otimes e_j$ (\checks{where} $\{e_1,\ldots, e_m\}$ is a basis of $V$), and
\begin{align}\label{eq:int in examp}
  L:= \w{T}_{x_0,V}^{x_1}(f) = \sum_{j=1}^m \w{T}_{x_0}^{x_1}(f_j) e_j =: R.
\end{align}
Let $F:\II_{\itLamb}\to V$ be a $C^1$ function such that $f(x)=D_V(F(x))$,
and write it as $F(x)=(F_1(x),\ldots,F_m(x))$\checks{. Then by} (\ref{eq:TV}) and (\ref{eq:int in examp}), we have
\begin{align*}
 L&= \int_{[x_0,x_1]_{\itLamb}} f\dd s; \\
 R&= \sum_{j=1}^m \bigg(\int_{[x_0,x_1]_{\itLamb}} \nabla F_j \dd s \bigg) e_j \\
  &= \sum_{j=1}^m (F_j(x_1)-F_j(x_0)) e_j \\
  &= F(x_1)-F(x_0)
\end{align*}
since $f_j\cdot\dd s=D(F_j)\cdot\dd s=\nabla F_j\dd s$ holds for all $1\=< j\=< m$.
Then Assumption \ref{assump} holds.
Now, consider the ODE
\begin{align}\label{eq:examplODE0511}
 D_Vf(x) + A(x)\cdot_V f(x) = D_Vf(x)+(1_{\itLamb}\otimes A(x))f(x) =0,
\end{align}
where $A(x)$ is a function of the form $\II_{\itLamb} \to \itLamb$.
Consider $A(x)e_i$ for each $x$, there are $a_{i1}(x)$, $\ldots$, $a_{im}(x) \in \kk$ such that
$A(x)e_i = \displaystyle \sum_{j=1}^m a_{ij}(x)e_j$\checks{.}
\checks{Then} we have
\begin{align*}
   & D_Vf(x) + A(x)\cdot_V f(x) \\
=\ & \sum_{i=1}^m D_Vf_i(x)e_i+(1_{\itLamb}\otimes A(x))\sum_{i=1}^m f_i(x)\otimes e_i \\
=\ & \sum_{i=1}^m (D_Vf_i(x)e_i+f_i(x)\otimes A(x)e_i) \\
=\ & \sum_{i=1}^m \Big(D_Vf_i(x)e_i+f_i(x)\otimes \sum_{j=1}^m a_{ij}(x)e_j\Big)\\
=\ & \sum_{i=1}^m \Big(D_Vf_i(x)e_i+\sum_{j=1}^m f_i(x)\otimes a_{ij}(x)e_j\Big)\\
=\ & \sum_{i=1}^m D_Vf_i(x)e_i+\sum_{i=1}^m\sum_{j=1}^m f_i(x)\otimes a_{ij}(x)e_j\\
=\ & \sum_{i=1}^m D_Vf_i(x)e_i+\sum_{j=1}^m\Big(\sum_{i=1}^m f_i(x)\otimes a_{ij}(x)\Big)e_j\\
\mathop{=}\limits^{\star}\ & \sum_{i=1}^m \Big(D_Vf_i(x)+\sum_{j=1}^m f_i(x)\otimes a_{ij}(x)\Big)e_j\\
=\ & \sum_{i=1}^m \Big(D_Vf_i(x)+\Big(\sum_{j=1}^m a_{ij}(x)\otimes 1\Big)f_i(x)\Big)e_j,
\end{align*}
where $\star$ holds since $a_{ij}(x)\in \RR$ holds for all $x$.
Write $\displaystyle\sum_{j=1}^m a_{ij}(x)\otimes 1$ as $\rho_j(x)$
(note: $a_{ij}(x)\otimes 1 = a_{ij}(x)$ in the tensor of $\RR$-linear spaces)\checks{.}
\checks{Then} the ODE (\ref{eq:examplODE0511}) is
\[ (f_j'(x)+\rho_j(x)f_j(x)=0)_{1\=< j\=< m}. \]
Thus we have
\[ \Sol = \mathrm{span}_{\RR}
  \bigg\{
    \mathrm{e}^{- \int_{x_0}^{x} \rho_j(t)\dd t}
    ~\bigg|~ 1\=< j\=< m
  \bigg\}. \]
We get that $\Sol$ is a $\itLamb$-module whose left $\itLamb$-action is defined as
\[ \itLamb \times \Sol \to \Sol, (\lambda_1,\cdots,\lambda_n)(f_1,\cdots,f_m):=(\lambda_1f_1,\ldots,\lambda_1f_m), \]
and $\mathrm{ev}_{x_0}$ sends each $f_{j}(x_0)\mathrm{e}^{-\int_{x_0}^{x} \rho(t)\dd t}$ to the element $e_j$ in $V$.
\checks{We get the following isomorphism} \[\Sol \mathop{\cong}\limits^{\mathrm{ev}_{x_0}} V=\RR^m\]\checks{.}
\end{example}

Now we provide an example for \checks{a} quiver algebra.
\checks{Recall that a quiver is a quadruple $\Q=(\Q_0,\Q_1,\mathfrak{s},\mathfrak{t})$ of a vertex set $\Q_0$, an arrow set $\Q_1$
and two functions $\mathfrak{s}, \mathfrak{t}: \Q_1\to \Q_0$ ending any vertex $v\in\Q_0$ to the source vertex $\mathfrak{s}(v)$ and sink vertex $\mathfrak{t}(v)$.}
\checks{In particular, if $\Q_0$ and $\Q_1$ are finite sets, then we call that $\Q$ is a finite quiver.}
\checks{For a finite quiver $\Q$ and an ideal $\I$ of the path algebra $\CC\Q$,
the quotient $\CC\Q/\I$ is called a \defines{quiver algebra} {\rm(}or a bound quiver algebra{\rm)}, see \cite[Chap.~II]{ASS2006} for details.}

\begin{example}\rm
Take the quiver $\Q=\xymatrix{ 1 \ar@(ur,dr)^{\e} }$ and Let $\itLamb=\CC\Q/\I$, where $\I=\langle \e^2\rangle$.
Then $\itLamb$ is isomorphic to $\CC \oplus \CC\e$ with $\e^2=0$,
and it is a Banach algebra with the norm $\|a+b\e\| := |a|+|b|$.
\checks{Let} $\homo:\itLamb\to\CC$ \checks{be} defined as $(a+b\e)\mapsto a$\checks{.}
Since, for each $\lambda = a+b\e\in\itLamb$ and $v=c+d\e\in V:=\itLamb$,
\[ \|\lambda v\| = \|ac+(ad+bc)\e\| = |ac|+|ad+bc| \]
and
\[ \|\lambda\|~\|v\| = (|a|+|b|)(|c|+|d|) \]
hold, we obtain
\[ \|\lambda v\| = |ac|+|ad+bc| \=< |a||c|+|a||d|+|b||c|+|b||d| = \|\lambda\|~\|v\|, \]
and \checks{thus} $V$ is a $\homo$-normed $\itLamb$-module.
Next, take $\II_{\itLamb}=[0,1]\times [0,1]\e$ and consider the ODE
\begin{align}\label{eq:k[x]}
  f'(x) + \e f(x)=0,
\end{align}
then we have $f(x)=f_1(x)+\e f_2(x)$, where $f_1(x)$ and $f_2(x)$ are \checks{functions} of type $\CC\to \CC$.
Thus, $\e f(x)=\e f_1(x)+\e^2 f_2(x)=\e f_1(x)$, and so we get that (\ref{eq:k[x]}) is
\[ \begin{cases}
  f_1'(x)+0 = 0; \\
  f_2'(x)+f_1(x)=0,
\end{cases} \]
and obtain
\[ f_1(x) = c_1 (\in\CC), ~ f_2(x) = (\Lebesgue)\int_0^x (-f_1(x)) \dd\mu = -c_1x+c_2. \]
Thus,
\[ \Sol = \{c_1 + (-c_1x+c_2)\e \mid c_1, c_2\in \CC\} \]
which is a $\itLamb$-module. Indeed, for the generator $\e$ of $\itLamb$,
we have $\e \cdot (c_1 + (-c_1x+c_2)\e) = c_1\e\in \Sol$ holds for all $c_1,c_2\in \CC$.
Next we \checks{compute what} is $\Sol$ in the isomorphism.
First, $\Sol$ is a $2$-dimensional module whose basis is $\{1-x\e,\e\}$.
Second, the quiver representation of $\Sol$ is
\begin{center}
\begin{tikzpicture}
\draw (0.2,0) node[ left]{$(1-x\e)\CC\oplus\e\CC$};
\draw (2,0)   node[right]{$\e\cdot(-)$,};
\draw[->][line width=0.75pt][rotate around={20:(0.5,0)}] (0,0) arc(-180:150:1);
\end{tikzpicture}
\end{center}
where $\e\cdot(-): (1-x\e)\CC\oplus\e\CC \to (1-x\e)\CC\oplus\e\CC$ is a $\CC$-linear map whose image is
$\im(\e\cdot(-)) = \CC \e \subseteq (1-x\e)\CC$, i.e., $\e\cdot(-) \ne 0$.
By \checks{the} Auslander--Reiten quiver
\[ \xymatrix{
  {\small 1} \ar@/^1.5pc/[rd]  \ar@{-->}@(lu,ld)_{\tau} & \\
   & {\begin{smallmatrix}1\\1\end{smallmatrix}}  \ar@/^1.5pc/[lu]
}  \]
of $\itLamb$, we obtain that $\Sol \cong {_{\itLamb}\left(\begin{smallmatrix}1\\1\end{smallmatrix}\right)}$
is the indecomposable projective module corresponding the vertex $1$ of the quiver $\Q$, i.e.,
\[ \Sol \cong {_{\itLamb}\left(\begin{smallmatrix}1\\1\end{smallmatrix}\right)} \cong \itLamb = V \]
in this example.
\end{example}

\paragraph{Authors' Contributions}
The order of authors is alphabetical, and all authors contributed equally to the conception, methodology, derivation, and writing of this paper.

\paragraph{Competing Interests}
he authors declare that they have no conflicts of interest as defined by the journal, nor any other interests that could be perceived as influencing the results presented in this paper.

\paragraph{Data availability}
Data sharing not applicable to this article as no datasets were generated or analysed during the current study

\paragraph{Ethical Approval}
This article does not require ethical approval.

\paragraph{Fundings}
Yu-Zhe Liu is supported by
the National Natural Science Foundation of China (Grant Nos. 12401042 and 12561008),
the Science and Technology Foundation of the Guizhou S\&T Department (Grant Nos. VZD[2026]001, ZD[2025]085 and ZK[2024]YiBan066),
and Scientific Research Foundation of Guizhou University (Grant No. [2023]16).
Shengda Liu is supported by the National Natural Science Foundation of China (Grant No. 62203442) and
the Beijing Natural Science Foundation (Grant No. L2603026).

\paragraph{Acknowledgements}
We are grateful to the reviewer for the constructive comments, which have improved the clarity and consistency of the paper.



\addcontentsline{toc}{section}{References}
%
%
%

\def\cprime{$'$}

\end{document}